\author{Tamanna Chatterjee}
\date{\today}
\title{Study of Parity Sheaves arising from Graded Lie algebras}
\theoremstyle{plain}
\newtheorem{theorem}{Theorem}[section]
\newtheorem{corollary}[theorem]{Corollary}
\newtheorem{lemma}[theorem]{Lemma}
\newtheorem{proposition}[theorem]{Proposition}
\theoremstyle{definition}
\newtheorem{remark}[theorem]{Remark}
\newtheorem{conjecture}[theorem]{Conjecture}
\newtheorem{assume}[theorem]{Assumption}
\newtheorem{definition}[theorem]{Definition}
\newtheorem{example}[theorem]{Example}
\newcommand{\mf}{\mathfrak}
\newcommand{\mc}{\mathcal}
\newcommand{\mb}{\mathbb}
\newcommand{\ms}{\mathscr}
\newcommand{\rg}{R\Gamma}
\newcommand{\eal}{\end{align*}}
\newcommand{\al}{\alpha}
\newcommand{\bal}{\begin{align*}}
\DeclareMathOperator{\res}{Res^\mathfrak{g}_{\mathfrak{p}}}
\DeclareMathOperator{\ind}{Ind^\mathfrak{g}_{\mathfrak{p}}}
\DeclareMathOperator{\Ind}{Ind}
\DeclareMathOperator{\inn}{Ind}
\DeclareMathOperator{\p}{Perv}
\DeclareMathOperator{\f}{For}
 \DeclareMathOperator{\stab}{Stab}
 \DeclareMathOperator{\Res}{Res}
\DeclareMathOperator{\loc}{Loc_f}
\DeclareMathOperator{\lie}{Lie}
\DeclareMathOperator{\h}{H}
\DeclareMathOperator{\Hom}{Hom}
\DeclareMathOperator{\lc}{Loc}
\DeclareMathOperator{\cu}{cusp}
\DeclareMathOperator{\i'}{\mathscr{I}(\mathfrak{l}_n)^{cusp}}
\DeclareMathOperator{\IC}{\mathcal{IC}}
\DeclareMathOperator{\E}{\mathcal{E}}
\begin{document}
\maketitle
\begin{abstract}
    Let $G$ be a complex, connected, reductive, algebraic group, and   $\chi:\mb{C}^\times \to G$ be  a  fixed cocharacter that defines a grading on $\mf{g}$, the Lie algebra of $G$. Let $G_0$ be the centralizer of $\chi(\mb{C}^\times)$. In this paper, we  study $G_0$-equivariant parity sheaves on $\mf{g}_n$, under some assumptions on  the field $\Bbbk$ and the group $G$. The assumption on $G$ holds for $GL_n$ and for any $G$, it recovers results of Lusztig\cite{Lu} in characteristic $0$. The main result is that every parity sheaf occurs as a direct summand of the parabolic induction  of some cuspidal pair.
\end{abstract}

\section{Introduction}
Let $G$ be a complex, connected, reductive, algebraic group and   $\chi: \mb{C}^\times \to G$ be a fixed  cocharacter. Let $G_0$ be the centralizer of $\chi(\mb{C}^\times)$ and $\mf{g}_n\subset \mf{g}$ be the subspace such that  $Ad(\chi(t))$ acts on it by $t^n$ times identity. We are particularly interested in studying  the derived category of $G_0$-equivariant perverse sheaves on $\mf{g}_n$, denoted by $D^b_{G_0}(\mf{g}_n,\Bbbk)$. Here $\Bbbk$ is a field of positive characteristic. The simple perverse sheaves on $\mf{g}_n$ are indexed by $(\mc{O},\mc{L})$, where $\mc{O}$ is a $G_0$-orbit contained in $\mf{g}_n$ and $\mc{L}$ is an irreducible $G_0$-equivariant $\Bbbk$-local system on $\mc{O}$. We denote this set of pairs by $\ms{I}(\mf{g}_n,\Bbbk)$. We   define in section \ref{sec2},  the subset of all cuspidal pairs,  $\ms{I}(\mf{g}_n,\Bbbk)^{\cu}\subset \ms{I}(\mf{g}_n,\Bbbk)$.  We denote the simple perverse sheaf associated to $(\mc{O},\mc{L})$ by $\mc{IC}(\mc{O},\mc{L})$. Motivated by  applications to affine Hecke algebras, Lusztig has worked in $\Bbbk=\mb{C}$ and has proved in \cite{Lu} that every simple perverse sheaf is a direct summand of the parabolic induction of some cuspidal pair. But in positive characteristics, this result is not true.

Following the pattern from other works in modular representation theory, often the appropriate replacement for ``semisimple complex" is ``parity complex''. Parity sheaves are classified as the class of constructible complexes on some stratified varieties, where the strata satisfies  some cohomology vanishing properties \cite{parity}.  We denote the parity sheaf associated to the pair $(\mc{O},\mc{L})$ by $\mc{E}(\mc{O},\mc{L})$. So the most fundamental question that arises is if they exist on $\mf{g}_n$.  Before going into that question we make some assumptions on the field coefficient.  We assume that the characteristic $l$ of $\Bbbk$ is  ``pretty good" and the field is ``big-enough" for $G$. Both the definitions are given in subsection \ref{subsec1.6}.  A pair $(\mc{O},\mc{L})\in \ms{I}(\mf{g}_n,\Bbbk)$ is said to be clean if $\mc{IC}(\mc{O},\mc{L})$ has vanishing stalks on $\bar{\mc{O}}-\mc{O}$. 

Under the above assumptions, we  assume Mautner's cleanness conjecture (conjecture \ref{con}) is true, this plays an important role in the proofs of  the main  theorems of this paper. Mautner's conjecture already holds when the characteristic $l$ does not divide the order of the Weyl group of the group $G$ or  if every irreducible factor of the root system of $G$ is either of type $A,B_4,C_3,D_5$ or of exceptional types.  We make another conjecture (conjecture \ref{2.6(c)}), at the end of section \ref{sec2} that on  the nilpotent cone, parabolic induction preserves the parity of any cuspidal pair on a Levi subgroup. This conjecture is known to be true for $GL_n$ and in characteristic $0$ it is true for any group $G$. In section \ref{sec10} we will show that this conjecture is also true for $Sp_4$ and $SL_4$. The following are the main results of this paper.

\begin{enumerate}\item For any cuspidal pair,  $(\mc{O},\mc{L})\in \ms{I}(\mf{g}_n,\Bbbk)^{\cu}$, $\mc{IC}(\mc{O},\mc{L})$ is clean and so $\mc{IC}(\mc{O},\mc{L})=\mc{E}(\mc{O},\mc{L})$.
\item
Parabolic induction takes parity complexes to parity complexes.
\item
For any pair $(\mc{O},\mc{L})\in \ms{I}(\mf{g}_n,\Bbbk)$, $\mc{E}(\mc{O},\mc{L})$ exists and is a direct summand of the parabolic induction of some cuspidal pair.	
\end{enumerate}

The proof of the existence of parity sheaves for the space of  quiver representation of type $A,D,E$ is given in \cite{Ma}, using some other methodology. For some of these cases the space coming from the quiver representation is the same space that we study here. So for these cases existence of parity sheaf has already been proved for $\mf{g}_n$.
\subsection{Outline}In section \ref{sec2} we build the  necessary background, assumptions and notations. 
Section \ref{sec3} contains the lemmas on the varieties having $\mb{C}^\times$ action on it. In section \ref{sec4}, we define $\ind$ and $\res$ in the graded setting and prove  
 the existence of parity sheaves for cuspidal pairs. In section \ref{sec5}, we redefine both $\ind$ and $\inn^G_P$ for cuspidal pairs and in theorem \ref{th5},  we prove that the parity condition is preserved for cuspidal pairs. In section \ref{sec6}, 
we prove that the parity sheaf exists for a general pair $(\mc{O},\mc{L})\in \ms{I}(\mf{g}_n,\Bbbk)$ and  in  \ref{sec8}, we prove that the parabolic induction preserves parity for a general pair. In section \ref{sec10}, we compute some examples.
\section*{Acknowledgements}
The author owes an enormous amount of debt to her advisor Pramod Achar for his unconditional help throughout the process. This work would not be possible without his unlimited patience and encouragement. The author received support from NSF Grant Nos. DMS-1500890 and DMS-1802241.
\tableofcontents
\section{Background}\label{sec2}
Let $\Bbbk$ be a field of characteristic $l>0$. We  consider  sheaves with coefficients in $\Bbbk$. The varieties we  work on will be over $\mathbb{C}$. Let  $G$  be a connected, reductive, algebraic group over 
$\mathbb{C}$ and $\mathfrak{g}$ be the Lie algebra of $G$. If $H$ is an algebraic variety acting on $X$, we denote by $D^b_H(X,\Bbbk)$ or $D^b_H(X)$, the derived category of $H$-equivariant constructible sheaves, which is defined in \cite{BL}, and  $\p_H(X,\Bbbk)$, its full subcategory of $H$-equivariant perverse $\Bbbk$-sheaves. The constant sheaf on $X$ with value $\Bbbk$ is denoted by ${\underline{\Bbbk}}_X$ or more simply 
$\underline{\Bbbk}$.

We  fix a cocharacter map, $\chi:\mathbb{C}^\times \to G$ and define, \[G_0=\{g\in G| g\chi(t)=\chi(t)g, \forall{t}\in \mathbb{C}^\times\}.\]
 For $n\in \mathbb{Z}$, define, \[\mathfrak{g}_n=\{x\in \mathfrak{g}| Ad(\chi(t))x=t^nx, \forall{t}\in \mb{C}^\times\}.\] This defines a grading on $\mathfrak{g}$,
 \[\mathfrak{g}=\bigoplus_{n\in \mathbb{Z}}\mathfrak{g}_n.\]Clearly, $\mathfrak{g}_0=\lie(G_0)$ and
 $G_0$ acts on $\mathfrak{g}_n$. We have the following lemma  from \cite[pp. ~158]{Lu},

\begin{lemma}\label{finite}
	For $n\neq0$, $G_0$ acts on $\mathfrak{g}_n$ with only finitely many orbits.	
\end{lemma}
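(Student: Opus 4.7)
The strategy is to reduce to the classical finiteness of $G$-orbits on the nilpotent cone, and then use a tangent-space/dimension argument to show that each such $G$-orbit meets $\mf{g}_n$ in only finitely many $G_0$-orbits.

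First I would verify that every element of $\mf{g}_n$ is nilpotent when $n \neq 0$. For $x \in \mf{g}_n$ and $t \in \mb{C}^\times$ one has $Ad(\chi(t))x = t^n x$, so letting $t \to 0$ (if $n > 0$) or $t \to \infty$ (if $n < 0$) shows that $0$ lies in the closure of the $G$-orbit of $x$; hence $x$ is nilpotent. By the Dynkin--Kostant theorem, the nilpotent cone has only finitely many $G$-orbits, so it suffices to fix such an orbit $\mc{O}$ and show that $\mc{O} \cap \mf{g}_n$ is a finite union of $G_0$-orbits.

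For this, take any $x \in \mc{O} \cap \mf{g}_n$ and compare two tangent spaces. The $G$-orbit tangent space is $T_x(\mc{O}) = [\mf{g},x]$, and since $[\mf{g}_m,x] \subseteq \mf{g}_{m+n}$, the grading gives
\[
[\mf{g},x] \cap \mf{g}_n \;=\; [\mf{g}_0,x] \;=\; T_x(G_0 \cdot x),
\]
because whenever $m \neq 0$ the summand $[\mf{g}_m,x]$ lies in $\mf{g}_{m+n}$ with $m+n \neq n$ and so contributes nothing to the intersection. Combining $T_x(\mc{O} \cap \mf{g}_n) \subseteq T_x(\mc{O}) \cap \mf{g}_n$ with the local-dimension bound $\dim_x Y \leq \dim T_x Y$ (valid without smoothness), I get $\dim_x(\mc{O} \cap \mf{g}_n) \leq \dim[\mf{g}_0,x] = \dim(G_0 \cdot x)$; the reverse inequality is immediate from $G_0 \cdot x \subseteq \mc{O} \cap \mf{g}_n$. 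Therefore $G_0 \cdot x$ is open in $\mc{O} \cap \mf{g}_n$ at $x$.

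To conclude, note that $\mc{O} \cap \mf{g}_n$ is a subvariety with finitely many irreducible components, and the previous paragraph shows that every $G_0$-orbit is open in the union of components that contain it. Two distinct open $G_0$-orbits cannot share an irreducible component, since their intersection in that component would be nonempty; hence the number of $G_0$-orbits on $\mc{O} \cap \mf{g}_n$ is bounded by the number of irreducible components, and is in particular finite. The main point to watch is that $\mc{O} \cap \mf{g}_n$ need not be smooth (or even equidimensional), which is why I use the local-dimension inequality $\dim_x Y \leq \dim T_x Y$ rather than a smoothness-based dimension count.
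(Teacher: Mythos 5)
The paper does not give a proof of this lemma: it simply cites Lusztig, \cite[pp.~158]{Lu}. So there is no internal argument to compare against. Your proof is correct and self-contained, and it takes what is essentially the standard route (and, as far as I can tell, the same route as the cited source): reduce to the finiteness of nilpotent $G$-orbits via the observation that $\mathfrak{g}_n$ for $n\neq 0$ consists of nilpotent elements, then bound the number of $G_0$-orbits inside each $\mathcal{O}\cap\mathfrak{g}_n$ by a tangent-space dimension count.

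Two small points worth tightening if you were to write this out in full. First, the identity $[\mathfrak{g},x]\cap\mathfrak{g}_n=[\mathfrak{g}_0,x]$ uses that $[\mathfrak{g},x]=\bigoplus_m[\mathfrak{g}_m,x]$ is itself a graded subspace of $\mathfrak{g}$, so that intersecting with $\mathfrak{g}_n$ extracts exactly the degree-$n$ piece; this is clean but should be said, since in general an intersection of a direct sum with a summand need not distribute without the compatibility $[\mathfrak{g}_m,x]\subseteq\mathfrak{g}_{m+n}$. Second, in the final step it is cleaner to argue that each orbit $G_0\cdot x$ is open \emph{and dense} in a \emph{unique} irreducible component $Z$ of $\mathcal{O}\cap\mathfrak{g}_n$ (since $\overline{G_0\cdot x}$ is irreducible of the same dimension as $Z$, forcing $\overline{G_0\cdot x}=Z$), rather than speaking of openness in the union of containing components. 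The resulting map from orbits to components is then visibly injective, since two dense open subsets of an irreducible $Z$ must meet. You already note the key technical precaution — that $\mathcal{O}\cap\mathfrak{g}_n$ need not be smooth or equidimensional, so one must use the inequality $\dim_x Y\le\dim T_xY$ rather than a naive smooth dimension count — and that is exactly the point most likely to be glossed over.
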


Recall that $\mf{sl}_2$ is the Lie algebra of $SL_2$ generated by,
 \[e=\begin{pmatrix}
0&1\\0&0\\	
\end{pmatrix}, h=\begin{pmatrix}
1&0\\0&-1\\	
\end{pmatrix}, f=\begin{pmatrix}
0&0\\1&0\\	
\end{pmatrix}.
\]
 Let $J_n=\{\phi:\mf{sl}_2 \to \mf{g}|\hspace{1mm}\phi(e)\in \mf{g}_n, \phi(f)\in \mf{g}_{-n},\phi(h)\in \mf{g}_0\}$. We have a action of $G_0$ on $J_n$ by $(g,\phi)\to Ad(g)\circ \phi$. It is easy to check that this action is well-defined. 
 \begin{theorem}\label{th1}
 	The map from the set of $G_0$-orbits on $J_n$ to the set of $G_0$-orbits on $\mf{g}_n$ defined by $\phi \to \phi(e)$ is a bijection.
 	
 \end{theorem}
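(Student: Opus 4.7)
The plan is to prove the bijection by establishing surjectivity (existence of a graded $\mf{sl}_2$-triple through any $x\in \mf{g}_n$) and injectivity (uniqueness up to $G_0$-conjugacy), in each case refining the classical Jacobson--Morozov--Kostant theory with the $\chi$-grading.

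For surjectivity, first observe that for $n\neq 0$ every $x\in\mf{g}_n$ is nilpotent: the family $Ad(\chi(t))x = t^n x$ has $0$ in its closure, so $0\in\overline{G\cdot x}$. By the classical Jacobson--Morozov theorem, $x$ extends to some $\mf{sl}_2$-triple $(x,h',y')$. I would then \emph{homogenize} this triple to one with $h\in\mf{g}_0$ and $y\in \mf{g}_{-n}$. Concretely, decomposing $h' = \sum_k h'_k$ and $y' = \sum_k y'_k$ into $\chi$-weight components and using $x\in \mf{g}_n$, the bracket relations $[h',x]=2x$ and $[x,y']=h'$ force $[h'_0, x]=2x$ and $[x, y'_{-n}] = h'_0$. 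A short argument inside the $\mf{sl}_2$-module generated by $y'_{-n}$ under $ad(x)$ and $ad(h'_0)$ then produces a $y\in \mf{g}_{-n}$ satisfying $[h'_0, y] = -2y$ and $[x,y]=h'_0$, so that $(x, h'_0, y)$ defines the desired $\phi\in J_n$. Alternatively one can cite the graded Jacobson--Morozov theorem of Kostant--Rallis and Vinberg directly.

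For injectivity, suppose $\phi_1,\phi_2\in J_n$ and $\phi_1(e), \phi_2(e)$ lie in the same $G_0$-orbit. After replacing $\phi_2$ by $Ad(g_0)\circ \phi_2$ for a suitable $g_0\in G_0$, I may assume $\phi_1(e)=\phi_2(e) = x$. Kostant's uniqueness statement implies that the set of $\mf{sl}_2$-triples with first coordinate $x$ is a torsor under the unipotent radical $U$ of $Z_G(x)$, so there is a unique $u\in U$ with $Ad(u)\circ \phi_1 = \phi_2$; the goal is to show $u\in G_0$. To this end, let $\lambda_i:\mb{C}^\times\to G$ denote the cocharacter associated to $\phi_i$ (with $d\lambda_i(1) = \phi_i(h)$, so $Ad(\lambda_i(t))x = t^2 x$). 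Since $\phi_i(h)\in \mf{g}_0$ and $\mf{g}_0=\lie(G_0)$, one has $\lambda_i(\mb{C}^\times)\subset G_0$. The combined cocharacter $\mu(t) := \chi(t^2)\lambda_1(t^{-n})$ then lands in $Z_G(x)\cap G_0$ and acts on $U$ by conjugation; the elements of $U$ that send graded triples to graded triples are precisely the fixed points of this action, which is the subgroup $U\cap G_0$. Because $\phi_1$ and $\phi_2$ are both graded, $u$ is fixed, hence $u\in U\cap G_0\subset G_0$.

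The main obstacle I anticipate is the injectivity step, specifically the construction of the compensating cocharacter $\mu$ and the identification of its $U$-fixed points with $U\cap G_0$. This requires careful bookkeeping between the $\chi$-grading and those induced by the associated cocharacters of the triples, together with the Levi--unipotent decomposition of $Z_G(x)$ being compatible with the $G_0$-action. Once this is in place, both directions reduce to the classical (ungraded) Jacobson--Morozov--Kostant theory.
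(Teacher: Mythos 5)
The paper's own proof of this theorem is a one-line citation to Lusztig's Proposition~3.3, so you are supplying an actual argument. Your surjectivity sketch is sound: for $n\ne 0$ the element $x$ is nilpotent, extracting the $\chi$-weight-$0$ component of $h'$ and the weight-$(-n)$ component of $y'$ preserves the relations $[h'_0,x]=2x$ and $[x,y'_{-n}]=h'_0$, and a graded Morozov correction (or a direct appeal to the graded Jacobson--Morozov theorem, as you note) finishes.

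The injectivity step has a genuine gap, exactly where you flag uncertainty. The cocharacter $\mu(t)=\chi(t^2)\lambda_1(t^{-n})$ does centralize $\phi_1$ and does land in $Z_G(x)\cap G_0$, but its fixed-point set in $U$ is \emph{not} $U\cap G_0$. In terms of weights on $\mf{u}=\lie(U)$, membership in $U\cap G_0=U^\chi$ means the $\chi$-weight is $0$, whereas membership in $U^\mu$ means $2(\chi\text{-weight})-n(\lambda_1\text{-weight})=0$; since $\mf{u}$ carries strictly positive $\lambda_1$-weights (Kostant), these conditions are genuinely different. Worse, $u$ need not lie in $U^\mu$ at all: $\mu$ fixes $\phi_1$ but typically moves $\phi_2$, because $\lambda_1$ does not centralize $\phi_2(h)=Ad(u)\phi_1(h)$ unless $u=1$ (the difference $Ad(u)\phi_1(h)-\phi_1(h)$ lies in strictly positive $\lambda_1$-weight spaces), so $\mu(t)u\mu(t)^{-1}$ carries $\phi_1$ to $Ad(\mu(t))\phi_2\ne\phi_2$ and by Kostant's uniqueness differs from $u$. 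A correct version uses both cocharacters: set $\rho_i(t)=\chi(t^2)\lambda_i(t^{-n})$, so that $\rho_i(\mb{C}^\times)\subset Z_G(\phi_i)\cap G_0$; from $Ad(u)\phi_1=\phi_2$ one gets $u\lambda_1 u^{-1}=\lambda_2$, hence $u\rho_1(t)u^{-1}\rho_2(t)^{-1}=u\chi(t^2)u^{-1}\chi(t^2)^{-1}$. The left side lies in $Z_G(\phi_2)$ (since $uZ_G(\phi_1)u^{-1}=Z_G(\phi_2)$), the right side lies in $U$ (since $\chi$ normalizes $U$), and $Z_G(\phi_2)\cap U=\{1\}$ by the Levi decomposition of $Z_G(x)$; therefore $u$ commutes with $\chi(t^2)$ for all $t$, i.e.\ $u\in G_0$.
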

 \begin{proof}
 The proof follows from \cite[\text{Prop} ~3.3]{Lu}.	
 \end{proof}

 \subsection{The set $\mathscr{I}(G,\Bbbk)$ and $\mathscr{I}(\mathfrak{g}_n,\Bbbk)$}\label{subsec2.1} Let $\mathscr{N}_G$ be the nilpotent cone of $G$. Recall that $G$ acts on $\mathscr{N}_G$ and has finitely many orbits.
 
 The set $\mathscr{I}(G,\Bbbk)$ is the set of pairs $(C,\mathcal{E})$ satisfying the condition that $C \subset \mathscr{N}_G$ is a nilpotent $G$-orbit in $\mathfrak{g}$ and $\mathcal{E}$ is an irreducible $G$-equivariant $\Bbbk$-local system on $C$(up to isomorphism). $G$-equivariant local systems on $C$ are in one-to-one correspondence with the irreducible representations of the component group $A_G(x):=G_x/G_x^o$ on $\Bbbk$-vector spaces, where $x$ is in $C$. Hence, it follows that the set $\mathscr{I}(G,\Bbbk)$ is finite. Sometimes when there is no confusion about the field of coefficients then we will just use $\mathscr{I}(G)$.
 
 Let $\mathscr{I}(\mathfrak{g}_n,\Bbbk)$ or $\mathscr{I}(\mathfrak{g}_n)$ be the set of all pairs $(\mathcal{O},\mathcal{L})$ where $\mathcal{O}$ is a  $G_0$-orbit in $\mathfrak{g}_n$ and $\mathcal{L}$ is an irreducible, $G_0$-equivariant $\Bbbk$-local system on $\mathcal{O}$(upto isomorphism). For fixed $\mathcal{O}$, $G_0$-equivariant local systems on $\mathcal{O}$ are in one to one correspondence with the irreducible representation of $A_{G_0}(x):=(G_0)_x/(G_0)_x^o$ for $x\in \mathcal{O}$. Hence by Lemma \ref{finite}, $\mathscr{I}(\mathfrak{g}_n)$ is finite.

Recall $G_0$ acts on $\mf{g}_n$ by the adjoint action. 
Now we have $\mb{C}^\times \times G_0$ action on $\mf{g}_n$ by $(t,g)\to t^{-n}Ad(g)$.

\begin{lemma}\label{lm3.9}
The $\mb{C}^\times \times G_0$-orbits and $G_0$-orbits coincide and each $G_0$- equivariant local system is also $\mb{C}^\times \times G_0$- equivariant and hence $\mb{C}^\times$-equivariant.
\end{lemma}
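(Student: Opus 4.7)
The plan rests on the observation that the $\mathbb{C}^\times$-factor of the given action already factors through $G_0$ via the cocharacter $\chi$. Since $G_0$ is by definition the centralizer of $\chi(\mathbb{C}^\times)$, we have $\chi(\mathbb{C}^\times)\subset Z(G_0)$, and in particular $\chi(t)\in G_0$ for every $t$. Moreover, for $x\in\mf{g}_n$ the defining condition of $\mf{g}_n$ gives
\[
t^{-n}\cdot x \;=\; Ad(\chi(t^{-1}))\,x.
\]
This identifies the $\mathbb{C}^\times$-action on $\mf{g}_n$ with the restriction of the adjoint $G_0$-action along the homomorphism $\chi^{-1}:\mathbb{C}^\times\to G_0$, $t\mapsto \chi(t^{-1})$.

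Using this, I would introduce the group homomorphism
\[
\phi:\mathbb{C}^\times\times G_0\longrightarrow G_0,\qquad \phi(t,g)=\chi(t^{-1})\,g,
\]
which is well defined as a homomorphism precisely because $\chi(t^{-1})$ is central in $G_0$. By the computation above, the $\mathbb{C}^\times\times G_0$-action on $\mf{g}_n$ is the composition of $\phi$ with the adjoint action of $G_0$. The coincidence of orbits is then immediate: on the one hand $\phi$ factors the action through $G_0$, so each $\mathbb{C}^\times\times G_0$-orbit lies in a single $G_0$-orbit; on the other hand, the inclusion $G_0\hookrightarrow \mathbb{C}^\times\times G_0$, $g\mapsto(1,g)$, shows the reverse containment.

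For the second assertion, I would use that any $G_0$-equivariant local system $\mc{L}$ on an orbit $\mc{O}\subset\mf{g}_n$ can be pulled back along $\phi$ to obtain a $\mathbb{C}^\times\times G_0$-equivariant structure. Explicitly, writing $a,p:(\mathbb{C}^\times\times G_0)\times\mc{O}\to\mc{O}$ for the action and projection, the factorization of the action map through the $G_0$-action map gives a canonical isomorphism $a^*\mc{L}\cong p^*\mc{L}$ obtained from the corresponding $G_0$-equivariance isomorphism, and the cocycle condition transports under pullback by functoriality. Restricting this $\mathbb{C}^\times\times G_0$-equivariant structure along $\mathbb{C}^\times\hookrightarrow\mathbb{C}^\times\times G_0$, $t\mapsto(t,e)$, then yields the required $\mathbb{C}^\times$-equivariant structure.

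There is no real obstacle here: the lemma is essentially the formal statement that an action which factors through a quotient group carries over equivariant data. The only point that requires any thought is verifying that $\phi$ is a genuine group homomorphism, which uses centrality of $\chi(\mathbb{C}^\times)$ in $G_0$; the rest is formal manipulation with pullbacks of equivariant sheaves.
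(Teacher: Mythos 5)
Your proof is correct and takes a genuinely cleaner, more structural route than the paper's. The key idea in your argument is that the $\mathbb{C}^\times\times G_0$-action on $\mathfrak{g}_n$ factors through the surjective homomorphism $\phi:\mathbb{C}^\times\times G_0\to G_0$, $(t,g)\mapsto\chi(t^{-1})g$; once this is observed, both the coincidence of orbits and the transfer of equivariant structure are entirely formal consequences, valid for any $n$ and without invoking the finiteness of orbits. The paper instead proves the two assertions by more ad hoc means: for the orbit statement it uses the finiteness of $G_0$-orbits (Lemma \ref{finite}) together with a dimension argument on a $\mathbb{C}^\times$-line whose dense part lies in an open $G_0$-orbit, arguing by contradiction that the line cannot escape the orbit; for the local-system statement it compares component groups, showing $(\mathbb{C}^\times\times G_0)^x/(\mathbb{C}^\times\times G_0)^{x,\circ}\cong G_0^x/(G_0^x)^\circ$ and invoking the classification of equivariant local systems by representations of component groups. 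Your factorization $\phi$ subsumes both of these and avoids the case analysis; indeed it also makes the isomorphism of component groups that the paper uses immediate, since $\ker\phi$ is the connected central torus $\{(t,\chi(t)):t\in\mathbb{C}^\times\}$. The one thing worth making fully explicit in your write-up is that surjectivity of $\phi$ (via the section $g\mapsto(1,g)$) is what gives equality, not just inclusion, of orbits; you do state the reverse containment correctly, so this is a presentational remark rather than a gap.
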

\begin{proof}
Since there are finitely many $G_0$-orbits in $\mf{g}_n$,  we can choose a $\mb{C}^\times$-line $L$ in $\mf{g}_n$ and $G_0$-orbit $ \mc{O}$ so that $\mc{O}\cap L$ is dense in $L$. Now we can choose $x\in \mc{O}\cap L$. Therefore $L=\mb{C}^\times \cdot x$. Let $y\in \mb{C}^\times\cdot x-\mc{O}$ and let $\mc{O}'$ be the $G_0$-orbit of $y$. As $y$ is in the closure of ${\mc{O}\cap \mb{C}^\times \cdot x}$, which is  a  subset of  $\bar{\mc{O}}$.  Therefore,  $\mc{O}'\subset \bar{\mc{O}}$. Hence $\dim \mc{O}'< \dim \mc{O}$. Now as $\mb{C}^\times$ action commutes with $G_0$ action, so $G^x_0=G^y_0$. Hence $\dim \mc{O}'=\dim \mc{O}$, which is a contradiction. So we can conclude that $\mb{C}^\times\cdot x - \mc{O}$ is empty.

For the second part, it is quite easy to show that 

\[(\mb{C}^\times \times G_0)^x/(\mb{C}^\times \times G_0)^{x,\circ}\cong G_0^x/(G_0^x)^\circ.\]
Hence, $\loc_{,G_0}(\mf{g}_n)\cong \loc_{, \mb{C}^\times \times G_0}(\mf{g}_n)$. As $\mb{C}^\times$ is sitting inside $\mb{C}^\times \times G_0$ ,  any $\mb{C}^\times \times G_0$-equivariant sheaf is $\mb{C}^\times$-equivariant. Hence we can conclude the $G_0$-equivariant local system is $\mb{C}^\times \times G_0$-equivariant and hence $
\mb{C}^\times$-equivariant.
\end{proof}

 \subsection{Cuspidal pairs}\label{cuspidal}
  The simple objects in $\p_G(\mathscr{N}_G,\Bbbk)$ are of the form $\mathcal{IC}(C,\mathcal{E})$, where $(C,\mc{E})\in \ms{I}(G)$. Let $P$ be a parabolic subgroup of $G$ with unipotent radical $U_P$ and let $L\subset P$ be a Levi factor of $P$. One can identify $L$ with $P/U_P$ through the natural morphism, $L\xhookrightarrow{}P \twoheadrightarrow{} P/U_P$. We consider a diagram,
  \[\mathscr{N}_L\xleftarrow{\pi_P}\mathscr{N}_L+\mathfrak{u}_P\xrightarrow{e_P}G\times^{P}(\mathscr{N}_L+\mathfrak{u}_P)\xrightarrow{\mu_P}\mathscr{N}_G
  \] where $\mathfrak{u}_P=\lie(U_P)$, $\pi_P, e_P$ are the obvious maps and $\mu_P(g,x)=Ad(g)x$. Let \[i_P=\mu_P\circ e_P:\mc{N}_L+\mf{u}_P\to \mc{N}_G\]The parabolic restriction functor denoted by,
  \[\Res^G_P: D^b_G(\mathscr{N}_G,\Bbbk) \to D^b_L(\mathscr{N}_L,\Bbbk)
  \]is defined by $\Res^G_P(\mathcal{F})={\pi_P}_!e_P^*\mu_P^*\f^G_L(\mathcal{F})={\pi_P}_!i_P^*\f^{G}_L(\mc{F})$. Here \[\f^G_L: D^b_G(\mc{N}_G,\Bbbk) \to D^b_L(\mc{N}_G,\Bbbk)\] is the forgetful functor.
   The parabolic induction comes from the same diagram above.
   \[\Ind^G_P:  D^b_L(\mathscr{N}_L,\Bbbk) \to D^b_G(\mathscr{N}_G,\Bbbk)  \]
and is defined by,
   $\Ind^G_P(\mc{F}):={\mu_P}_!(e_P^*\f^G_P)^{-1}\pi_P^*(\mc{F})$. 
 Here again $\f^G_P$ denotes the forgetful functor and,  $e_P^*\f^G_P:D^b_G( G\times^P(\mc{N}_L+\mf{u}_P))\to D^b_P(\mc{N}_L+\mf{u}_P)$ is the induction equivalence map.
\begin{definition}
\begin{enumerate}
	\item A simple object $\mathcal{F}$ in $\p_G(\mathscr{N}_G,\Bbbk)$ is called cuspidal if $\Res^G_P(\mc{F})=0$, for any proper parabolic $P$ and  Levi factor $L\subset P$.
	\item A pair $(C,\mathcal{E})\in \mathscr{I}(G)$, is called cuspidal if the corresponding simple perverse sheaf $\mathcal{IC}(C,\mathcal{E})$ is cuspidal.

\end{enumerate}	
\end{definition} 
\begin{remark}
		Notice that the set of cuspidal pairs  depends on the characteristic $l$ of the field of coefficients $\Bbbk$; so we sometime call it $l$-cuspidal. We will denote the subset of cuspidal pairs in $\mathscr{I}(G)$ or $\mathscr{I}(G,\Bbbk)$ by $\mathscr{I}(G)^{\cu}$ or $\mathscr{I}(G,\Bbbk)^{\cu}$.
\end{remark}

\begin{remark}\label{rk1.2.1}
From \cite[\text{Remark} ~2.3(1)]{AJHR2}, if $\mathcal{IC}(C,\mathcal{E})$ is cuspidal then so is $\mathbb{D}\mathcal{IC}(C,\mathcal{E})=\mathcal{IC}(C,\mathcal{E}^\lor)$, where $\mathbb{D}$ is the Verdier duality functor and $\mathcal{E}^\lor$ is the dual local system of $\mathcal{E}$.  	
\end{remark}

\subsection{Modular reduction} Let $\mathbb{K}$ be a finite extension of $\mathbb{Q}_l$ with ring of integers $\mathbb{O}$ and residue field $\Bbbk$. Then $(\mathbb{K},\mathbb{O},\Bbbk)$ constitutes an $l$-modular system  and we can talk about the modular reduction map. Let $E\in (\mb{K},\mb{O},\Bbbk)$, and $K_{G_0}(\mf{g}_n,E)$ be the Grothendieck group of $D^b_{G_0}(\mf{g}_n,E)$. Then the modular reduction map,
\[d:K_{G_0}(\mf{g}_n,\mb{K})\to K_{G_0}(\mf{g}_n,\Bbbk)\] is defined by $d[\mc{IC}(\mc{O},\mc{L})]=[\Bbbk \otimes^L_\mc{O} \mc{IC}(\mc{O},\mc{L}_\mb{O})]$, where $\mc{L}_\mb{O}$ is a torsion-free part $\mb{O}$-local system. In the same way we can define the modular reduction on the nilpotent cone,
\[K_G(\mc{N}_G,\mb{K}) \to K_G(\mc{N}_G,\Bbbk).\]If the characteristic $l$ of $\Bbbk$ is rather good for $G$(see Definition \ref{rg}), this modular reduction induces a bijection by \cite{AJHR},
\[\ms{I}(G,\mb{K}) \xrightarrow{\cong} \ms{I}(G,\Bbbk).\]
We will discuss the modular reduction in more detail in \ref{sec5.3}. The pair 
$(C,\mathcal{E})\in \mathscr{I}(G,\Bbbk)$ will be called $0$-cuspidal if it is in the image of some cuspidal pair under $d$, and we will denote the set of $0$-cuspidal pairs by $\mathscr{I}(G,\Bbbk)^{0-\cu}$ or $\mathscr{I}(G)^{0-\cu}$. 
\begin{definition}
 $(\mathcal{O},\mathcal{L})\in \mathscr{I}(\mathfrak{g}_n,\Bbbk)$ will be called  cuspidal if there exists a pair $(C,\mc{E})\in \ms{I}(G)^{0-\cu}$, such that $C\cap \mf{g}_n=\mc{O}$ and $\mc{L}=\mc{E}|_{\mc{O}}$.
  We will denote the set of all cuspidal pairs on $\mf{g}_n$ by, 
$\mathscr{I}(\mathfrak{g}_n)^{\cu}$.
\end{definition}
\begin{remark}
Notice that the definition of cuspidal on $\mf{g}_n$ is not  coming from the restriction functor as for the nilpotent cone and there is no $l$ version in definition of cuspidal pairs on $\mf{g}_n$. 
\end{remark}

\subsection{Cleanness}\label{subsec2.3}A pair $(C,\mathcal{E})\in \mathscr{I}(G)$ is called $l$-clean if the corresponding $\mathcal{IC}(C,\mathcal{E})$ has vanishing stalks on $\bar{C}-C$. Similarly, a pair $(\mathcal{O},\mathcal{L})\in \mathscr{I}(\mathfrak{g}_n)$ is called $l$-clean if the corresponding $\mathcal{IC}(\mathcal{O},\mathcal{L})$ has vanishing stalks on $\bar{\mathcal{O}}-\mathcal{O}$. 

\begin{definition}\label{rg}
	A prime number $l$ is said to be a rather good prime for a group $G$, if it is a good prime for $G$ and does not divide $|Z(G)/Z(G)^\circ|$.
\end{definition}
By \cite[Lemma ~2.1]{AJHR}, a prime is rather good if and only if $l$ does not divide $|A_G(x)|$ for any $x\in \mc{N}_G$.
The following is a part of a series of (unpublished) conjectures by C. Mautner.
\begin{conjecture}\label{con}{(Mautner's cleanness conjecture)} If the characteristic $l$ of $\Bbbk$ is a rather good prime for $G$, then every $0$-cuspidal pair $(C,\mathcal{E}) \in \mathscr{I}(G)$ is $l$-clean. 
	
\end{conjecture}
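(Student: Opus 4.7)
The plan is to deduce cleanness in characteristic $l$ from Lusztig's theorem in characteristic zero through Juteau's modular-reduction formalism. Since $(C,\mathcal{E})$ is $0$-cuspidal, there is a cuspidal pair $(C,\mathcal{E}_{\mathbb{K}}) \in \mathscr{I}(G,\mathbb{K})^{\cu}$ with $d[\mathcal{IC}(C,\mathcal{E}_{\mathbb{K}})]=[\mathcal{IC}(C,\mathcal{E})]$, and by Lusztig the sheaf $\mathcal{IC}(C,\mathcal{E}_{\mathbb{K}})$ is clean: writing $j:C \hookrightarrow \overline{C}$ for the open inclusion,
\[
\mathcal{IC}(C,\mathcal{E}_{\mathbb{K}}) \;=\; j_{!}\mathcal{E}_{\mathbb{K}}[\dim C] \;=\; j_{*}\mathcal{E}_{\mathbb{K}}[\dim C].
\]

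The core intermediate step I would try to establish is the analogous statement with $\mathbb{O}$-coefficients, namely that $\mathcal{IC}(C,\mathcal{E}_{\mathbb{O}})$ also has vanishing stalks on $\overline{C}\setminus C$. Granting this, the modular reduction $\Bbbk \otimes^{L}_{\mathbb{O}} \mathcal{IC}(C,\mathcal{E}_{\mathbb{O}})$ is concentrated on $C$ and must therefore equal $j_{!}\mathcal{E}[\dim C]$. Because that complex is a simple perverse sheaf (cleanness collapses $j_{!}$, $j_{!*}$, and $j_{*}$), it must coincide with $\mathcal{IC}(C,\mathcal{E})$, which is precisely $l$-cleanness.

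To obtain the $\mathbb{O}$-cleanness I would follow Lusztig's strategy and invoke the Fourier--Sato transform for the conic $\mathbb{C}^{\times}$-action on $\mathfrak{g}$. This transform $\mathfrak{F}$ is an equivalence that preserves the subcategory of cuspidal perverse sheaves and should carry $\mathcal{IC}(C,\mathcal{E}_{\mathbb{O}})$, up to a cohomological shift, to an IC sheaf supported again on $\overline{C}$ with a dual $G$-equivariant local system on $C$. Cleanness would then follow by combining the usual support inequality for an IC sheaf with the cosupport inequality transported from $\mathfrak{F}\mathcal{IC}(C,\mathcal{E}_{\mathbb{O}})$: a boundary stratum $C' \subset \overline{C} \setminus C$ would be forced to carry stalks in strictly negative perverse degree from one description and strictly positive from the other, so they must vanish.

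The genuine obstacle, and the reason the conjecture remains open in general, is that this numerical squeeze requires the stalk and costalk cohomology of $\mathcal{IC}(C,\mathcal{E}_{\mathbb{O}})$ along every transversal slice to a boundary orbit to be $\mathbb{O}$-torsion-free. Under the rather-good-prime hypothesis $|A_{G}(x)|$ is invertible in $\Bbbk$ (by \cite[Lemma~2.1]{AJHR}), which kills the component-group contribution, but controlling the topological torsion of the relevant nilpotent slices is genuinely delicate. This is exactly why the conjecture is presently known only when $l \nmid |W|$ or when every simple factor of the root system belongs to the short list of types recorded in the introduction, where torsion-freeness of the relevant slices has been verified by direct computation; a uniform argument would presumably demand either a general torsion-freeness statement for the cohomology of nilpotent transversal slices, or a bespoke case-by-case analysis of the remaining classical and exceptional geometries.
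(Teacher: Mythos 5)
This statement is a \emph{conjecture}, not a theorem: the paper explicitly attributes it to Mautner as an unpublished conjecture, records that it is currently known only when $l \nmid |W|$ or when every irreducible factor of the root system is of type $A$, $B_4$, $C_3$, $D_5$, or exceptional (citing \cite{AJHR}), and then \emph{assumes} it for the rest of the paper. There is no proof in the paper to compare against, so the right question is whether your assessment of the state of the problem is accurate and whether the strategy you sketch is the relevant one.

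On that score your proposal is largely sound. The reduction you describe---pass from the $0$-cuspidal pair to a characteristic-zero cuspidal pair via $\mathbb{K}$, use Lusztig's cleanness there, try to lift cleanness to $\mathbb{O}$, and then specialize---is exactly the framework used in \cite{AJHR} for the cases where the conjecture is known, and the Fourier--Sato support/cosupport squeeze you describe is the mechanism Lusztig used in characteristic zero. Your identification of the bottleneck is also correct: the argument collapses to cleanness over $\Bbbk$ only if the relevant stalk/costalk complexes over $\mathbb{O}$ are torsion-free, and establishing that in general is the open problem. The rather-good hypothesis removes the component-group obstruction, as you say, but it does not by itself control topological torsion along transverse slices. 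One small point of care in the middle step: knowing that $\Bbbk\otimes^L_{\mathbb{O}}\mathcal{IC}(C,\mathcal{E}_{\mathbb{O}})$ has vanishing stalks on $\overline{C}\setminus C$ gives $j_!\mathcal{E}[\dim C]$, but to identify that with $\mathcal{IC}(C,\mathcal{E})$ you need the costalks on the boundary to vanish as well (equivalently, run the same argument for the Verdier-dual local system, as \cite[Remark~2.3(1)]{AJHR2} permits), so that $j_!\mathcal{E}[\dim C]=j_*\mathcal{E}[\dim C]=j_{!*}\mathcal{E}[\dim C]$. With that caveat, your write-up is an honest and accurate description of why the conjecture remains open; just be clear that it is a strategy sketch, not a proof, which is exactly how the paper treats the statement.
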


\begin{remark}
This conjecture has been already proved when the characteristic $l$ does not divide the order of the Weyl group of $G$, or if every irreducible factor of the root system of $G$ is either of type $A,B_4,C_3,D_5$ , or of exceptional type \cite{AJHR}.	
\end{remark}

\begin{remark}
The cleanness conjecture is not true if we replace $0$-cuspidal by $l$-cuspidal. A counter example is when $G=GL(2)$, and $l=2$ then the unique $2$-cuspidal pair $(O_{(2)},\underline{\Bbbk})$ is not $2$-clean. The proof is explained in \cite[Remark ~2.5]{AJHR2}.	 
\end{remark}

In this paper we  assume this conjecture is true.


  \subsection{Parity sheaves}\label{parity}Let $H$ be a linear algebraic group and  $X$ be a $H$-variety. We fix a stratification\[
  X=\coprod_{\lambda\in\Lambda}X_{\lambda}
  \]of $X$ into smooth connected locally closed ($H$-stable) subsets. For each $\lambda\in \Lambda$, $i_{\lambda}:X_{\lambda}\hookrightarrow X$ denotes the inclusion map and let $d_{\lambda}$ be the dimension of $X_{\lambda}$. For each $\lambda\in \Lambda$, let $\loc_{,H}(X_{\lambda},\Bbbk)$ or $\loc_{,H}(X_{\lambda})$ denote the category of $H$-equivariant $\Bbbk$-local systems of finite rank on $X_{\lambda}$.
 
 According to \cite{parity}, to talk about parity sheaves on $X$ we need the condition below, 
 \begin{equation}\label{vanish}
 	 \h^*_H(\mc{L})=0 \text{   for odd degrees,}
 \end{equation}
for any local system $\mc{L}\in \loc_{,H}(X_{\lambda})$ and for any $\lambda\in \Lambda$. 
\begin{definition}
\begin{enumerate}
\item A complex $\mathcal{F}\in D^b_H(X)$ is called $*$-even if	for each $\lambda\in \Lambda$ and $n\in \mathbb{Z}$, $\h^n(i_{\lambda}^*\mathcal{F})$ belongs to $\loc_{,H}(X_{\lambda})$ and vanishes for $n$ odd. A complex $\mathcal{F}\in D^b_H(X)$ is called $*$-odd if	for each $\lambda\in \Lambda$ and $n\in \mathbb{Z}$, $\h^n(i_{\lambda}^*\mathcal{F})$ belongs to $\loc_{,H}(X_{\lambda})$ and vanishes for $n$ even. Similarly, we can define $!$-even and $!$-odd complexes. 
  	
  	\item
  	A complex $\mathcal{F}$ is called even if it is both $*$-even and $!$-even.
  	 	A complex $\mathcal{F}$ is called odd if it is both $*$-odd and $!$-odd.
  	\item
  	A complex $\mathcal{F}$ is called parity if it splits as the direct sum of an even complex and an odd complex.
  \end{enumerate}

  \end{definition}
  Like $\mathcal{IC}$ sheaves, the definition of parity sheaves comes from a theorem. The following theorem  requires the assumption (\ref{vanish}) on of $\loc_{, H}(X_\lambda)$.
  
  \begin{theorem}\label{th2.3}
  Let $\mathcal{F}$ be an indecomposible parity complex. Then
  \begin{enumerate}
  \item The support of 	$\mathcal{F}$ is irreducible and hence of the form $\bar{X_{\lambda}}$, for some $\lambda \in \Lambda$.
  \item $\mathcal{F}|_{X_\lambda}$ is isomorphic to $\mathcal{L}[m]$ for some indecomposible object $\mathcal{L}$ in $\loc_{,H}(X_{\lambda})$ and some integer $m$.
  \item Any indecomposable parity complex supported on $\bar{X}_\lambda$ and extending $\mathcal{L}[m]$ is isomorphic to $\mathcal{F}$.
  \end{enumerate}
 	
  \end{theorem}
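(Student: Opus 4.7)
The plan is to follow the standard Juteau--Mautner--Williamson strategy. Since $\mc{F}$ is indecomposable and splits as the direct sum of an even and an odd complex, one of the two summands must vanish; without loss of generality I assume $\mc{F}$ is even. The central technical input will be the following Ext-vanishing statement: if $\mc{G}$ is $*$-even and $\mc{H}$ is $!$-odd (or with parities reversed), then $\Hom(\mc{G},\mc{H})=0$. I would prove this by induction on the number of strata meeting the supports of $\mc{G}$ and $\mc{H}$, using the recollement distinguished triangles of the stratification and the parity vanishing assumption~(\ref{vanish}) applied to the equivariant cohomology of each local system in $\loc_{,H}(X_\lambda)$ that appears along the way.

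For parts (1) and (2), I would pick a stratum $X_\lambda$ that is open in the support of $\mc{F}$, write $j:X_\lambda\hookrightarrow X$ for the inclusion, and use that $\mc{F}$ is even to obtain $i_\lambda^*\mc{F}\cong\bigoplus_n \mc{L}_n[-2n]$ with each $\mc{L}_n\in\loc_{,H}(X_\lambda)$. Choose an indecomposable summand $\mc{L}[m]$ of this restriction. Using the Ext-vanishing together with the adjunctions for $j$ and the cone of the unit $\mc{F}\to j_*j^*\mc{F}$ (which is supported away from $X_\lambda$), I would show that the restriction map $\Hom(\mc{F},\mc{F})\to\Hom(i_\lambda^*\mc{F},i_\lambda^*\mc{F})$ is surjective, and that its kernel lies in the Jacobson radical of $\operatorname{End}(\mc{F})$; therefore the idempotent cutting out $\mc{L}[m]$ on the open stratum lifts to an idempotent of $\mc{F}$. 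The indecomposability of $\mc{F}$ forces this idempotent to be the identity, so $\operatorname{supp}(\mc{F})\subseteq \overline{X}_\lambda$ and $i_\lambda^*\mc{F}\cong\mc{L}[m]$ with $\mc{L}$ indecomposable, which is exactly (1) and (2).

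For (3), suppose $\mc{F}$ and $\mc{F}'$ are indecomposable parity, both supported on $\overline{X}_\lambda$ and restricting to the same $\mc{L}[m]$ on $X_\lambda$. The same Ext-vanishing argument shows that the restriction map $\Hom(\mc{F},\mc{F}')\to\Hom(i_\lambda^*\mc{F},i_\lambda^*\mc{F}')=\Hom(\mc{L}[m],\mc{L}[m])$ is surjective. Choose a lift $\phi:\mc{F}\to\mc{F}'$ of the identity. Because both endomorphism rings are local (a Fitting/Krull--Schmidt argument, again powered by the Ext-vanishing), any morphism whose restriction to the open stratum is an isomorphism must itself be invertible, giving $\mc{F}\cong\mc{F}'$.

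The main obstacle I expect is precisely the Ext-vanishing lemma together with the surjectivity-and-radical-kernel step used to lift idempotents. Both require careful bookkeeping of the parity of cohomological degrees as one traverses the stratification, and at each inductive stage the closure of the argument relies on assumption~(\ref{vanish}) so that no odd Ext-group can appear unexpectedly. Once that technical heart is in place, (1)--(3) follow as formal consequences via Krull--Schmidt.
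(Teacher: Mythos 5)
Your overall strategy is correct and is the Juteau--Mautner--Williamson argument that the paper's proof simply cites (Theorem 2.12 of \cite{parity}): the Ext-vanishing between opposite-parity objects, surjectivity of restriction on Hom's with kernel in the radical, and idempotent lifting are exactly the right ingredients, and your treatment of parts (2) and (3) is essentially complete. Do note that ``kernel lies in the Jacobson radical'' alone is not enough to lift idempotents --- you also need the kernel to be nil, which holds here because $\operatorname{End}(\mathcal{F})$ is a finite-dimensional $\Bbbk$-algebra.

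Your deduction of part (1), however, has a gap. The idempotent you lift lives in $\operatorname{End}(i_\lambda^*\mathcal{F})$ and projects onto the summand $\mathcal{L}[m]$; forcing its lift to be the identity proves that $i_\lambda^*\mathcal{F}\cong\mathcal{L}[m]$ is indecomposable, i.e.\ part (2), but it says nothing about the support. If the support were reducible, say $\bar{X}_\lambda\cup\bar{X}_\mu$ with both $X_\lambda$ and $X_\mu$ open in it, the same computation at $\lambda$ would still show $i_\lambda^*\mathcal{F}$ indecomposable with no contradiction arising. To obtain irreducibility, instead set $U$ equal to the union of all strata $X_\mu$ that are open in $\operatorname{supp}(\mathcal{F})$; none of these lies in the closure of another, so $U$ is open in the support and the $X_\mu$ are its connected components, whence $\mathcal{F}|_U\cong\bigoplus_\mu\mathcal{F}|_{X_\mu}$ with every summand nonzero. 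Applying your surjectivity-with-radical-kernel argument to the open inclusion of $U$ into the support and lifting the corresponding orthogonal idempotents, indecomposability of $\mathcal{F}$ forces there to be exactly one such stratum, and then $\operatorname{supp}(\mathcal{F})=\bar{X}_\lambda$.
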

The proof of the theorem is given in  \cite[ ~2.12]{parity}.

\begin{definition}\label{parity}
A parity sheaf is an indecomposable parity complex with support $\bar{X}_\lambda$	and extending $\mathcal{L}[m]$ for some indecomposable $\mathcal{L}\in \loc_{,H}(X_\lambda)$ and for some $m\in \mb{Z}$. When such a complex exists we  denote it by $\mathcal{E}(X_\lambda,\mathcal{L})$ or $\mathcal{E}(\lambda,\mathcal{L})$ and this  has the property that $\mc{E}(\lambda,\mc{L})|_{X_\lambda}=\mc{L}[\dim X_\lambda]$. This is the unique parity sheaf associated with $(\lambda,\mathcal{L})$ up to shift.
\end{definition}
\begin{remark}
\begin{enumerate}
\item If $\mathcal{L}$ is not indecomposable then $\mathcal{E}(\lambda,\mathcal{L})$ denotes the direct sum of parity complexes coming from the direct summand of $\mathcal{L}$.
\item If $\mathcal{L}=\underline{\Bbbk}_{X_\lambda}$, then we may write $\mathcal{E}(\lambda, \mathcal{L})$ as $\mathcal{E}(\lambda)$.	
\end{enumerate}
	
\end{remark}

\subsection{Torsion primes and pretty good primes}\label{subsec1.6}Let $G$ be a reductive group with the root datum $(\bf{X},\Phi,\bf{Y},\Phi^\lor)$. A reductive subgroup of $G$ is called regular if it contains a maximal torus. If the  group $G$ is complex reductive  then all the regular reductive subgroups are in bijection with $\mathbb{Z}$-closed subsystems of $\Phi$, that is $\Phi_1\subset \Phi$.
\begin{definition}
	A prime $p$ is called a torsion prime for $G$ if for some regular reductive subgroup $H$ of $G$, $\pi_1(H)$ has $p$-torsion.
\end{definition}
\begin{definition}
A 	 prime $p$ is called pretty good for $G$ if for all subsets $\Phi_1\subset \Phi$, $\bf{X}/{\mathbb{Z}\Phi_1}$ and $\bf{Y}/{\mathbb{Z}\Phi_1^\lor}$ have no $p$-torsion.
\end{definition}
The properties of reductive groups for which $p$ is a pretty good prime have been discussed in \cite[\text{Remark} ~5.4]{Her}. From those properties and using the tables of centralisers from \cite{Car}, we have the following lemma.
\begin{lemma}\label{lm1.4}
A prime $p$ is pretty good for $G$ if and only if for all $x\in \mc{N}$, $p$ is not a torsion prime for $C_x$, where $C_x $ is the maximal reductive quotient of $(G^x)^\circ$, and the order of $A_G(x)$ is invertible in $\Bbbk$.	
\end{lemma}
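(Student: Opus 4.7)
The plan is to derive Lemma \ref{lm1.4} by combining \cite[Remark 5.4]{Her} with Carter's tables of nilpotent centralizers in \cite{Car}, so the argument is essentially a packaging of existing results rather than anything new.

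First I would recall the content of Herpel's Remark 5.4, which gives several equivalent reformulations of ``pretty good'' in terms of the centralizer scheme $C_G(x)$. The key point is that $p$ is pretty good for $G$ iff $C_G(x)$ is smooth for every $x \in \mf{g}$; equivalently, $(G^x)^\circ$ is a smooth group scheme and the component group $A_G(x) = G^x/(G^x)^\circ$ is étale (i.e.\ has order coprime to $p$). By Jordan decomposition, smoothness for all $x \in \mf{g}$ reduces to smoothness for nilpotent $x$: if $y = s + n$ is the Jordan decomposition then $C_G(y) = C_{C_G(s)}(n)$, and $C_G(s)$ is a Levi subgroup to which pretty-goodness passes. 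So the question is really about $x \in \mc{N}$.

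Next I would separate the smoothness of $(G^x)^\circ$ from the étale-component condition. The second condition is precisely ``$|A_G(x)|$ invertible in $\Bbbk$''. For the first, I would use the standard fact that $(G^x)^\circ$ is smooth iff its reductive quotient $C_x$ has the property that none of its regular reductive subgroups (obtained from subsystems of the root system of $C_x$) has $\pi_1$ with $p$-torsion; this is by definition the statement that $p$ is not a torsion prime for $C_x$. This gives one implication in each direction at the level of a fixed $x$.

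For the converse direction ($p$ not pretty good $\Rightarrow$ condition fails for some $x$) I would use the Bala--Carter parametrization of nilpotent orbits together with Carter's tables. If $p$ is not pretty good there is a subsystem $\Phi_1 \subset \Phi$ producing $p$-torsion in $\mathbf{X}/\mathbb{Z}\Phi_1$ or $\mathbf{Y}/\mathbb{Z}\Phi_1^\lor$. Carter's tables list, for every nilpotent orbit in every simple type, both the root system of $C_x$ and the structure of $A_G(x)$; by running through the tables I would exhibit a nilpotent $x$ realizing the offending $\Phi_1$ inside (the root system of) $C_x$, so that $p$ is a torsion prime of $C_x$, or else $p \mid |A_G(x)|$.

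The main obstacle is purely bookkeeping: the exceptional types require a case-by-case scan of Carter's tables to match each ``bad'' $\Phi_1$ to an actual nilpotent orbit. For the classical types the centralizers are described in terms of partitions and a uniform argument is available. Conceptually there is nothing new; the only subtlety is ensuring that torsion in $\mathbf{Y}/\mathbb{Z}\Phi_1^\lor$ is also detected by some nilpotent centralizer, which uses that the cocharacter side is governed by the dual root subsystems appearing in $Z(C_x)$ and in $A_G(x)$.
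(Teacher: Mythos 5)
Your overall plan --- reduce via Herpel's reformulations of ``pretty good'' and then compare against Carter's tables --- matches the paper's proof in spirit, since the paper's argument consists essentially of those two citations. But the bridge you propose in the middle is not correct. You state as a ``standard fact'' that $(G^x)^\circ$ (over a field of characteristic $p$) is a smooth group scheme if and only if $p$ is not a torsion prime for the reductive quotient $C_x$. That equivalence is false in both directions. For $x=0$ one has $(G^0)^\circ = G$, which is always smooth, while $C_0 = G$ can perfectly well have $p$ among its torsion primes: take $G = PGL_2$ and $p = 2$, where $\pi_1(PGL_2) = \mathbf{Y}/\mathbb{Z}\Phi^\vee \cong \mb{Z}/2$ already witnesses $2$ as a torsion prime. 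Conversely, for $G = SL_2$ and $x$ the regular nilpotent over $\overline{\mb{F}_2}$, the relation $[h,e]=2e=0$ forces $\dim \lie(G^x)=2$ while $\dim G^x =1$, so $(G^x)^\circ$ is not smooth, even though $C_x$ is trivial and therefore has no torsion primes at all.

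Because that intermediate step fails, Herpel's smoothness criterion cannot be split orbit-by-orbit into ``$p$ not torsion for $C_x$'' plus ``$p\nmid|A_G(x)|$'' in the clean way you describe. Non-smoothness of $C_G(x)$ in characteristic $p$ mixes the $\mathbf{X}/\mathbb{Z}\Phi_1$-torsion and $\mathbf{Y}/\mathbb{Z}\Phi_1^\vee$-torsion contributions across different elements $x$ (semisimple as well as nilpotent), and the closed subsystems $\Phi_1$ witnessing failure of pretty-goodness need not coincide with the root system of the $C_x$ for the same $x$. Your Jordan-decomposition reduction and the closing case-by-case scan of Carter's tables are fine; what is missing is a correct bridge between Herpel's criterion and the two conditions in the lemma. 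That bridge is exactly what the appeal to the tables has to supply: one must directly match the primes failing the pretty-good condition against the union, over nilpotent orbits, of the torsion primes of the various $C_x$ and the prime divisors of the various $|A_G(x)|$, rather than deducing it from the pointwise ``standard fact'' as stated.
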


This is the right time to make some assumption on the characteristic of $\Bbbk$.
  
  \begin{assume}\label{char}\begin{enumerate}\item  The characteristic $l$ of $\Bbbk$ is a pretty good prime for $G$.\item The field $\Bbbk$ is big enough for $G$;
i.e, for every Levi subgroup $L$ of $G$ and pair $(C_L,\mathcal{E}_L)\in \mathscr{I}(L)$, the irreducible $L$-equivariant $\Bbbk$-local system $\mathcal{E}_L $ is absolutely irreducible.  	
  \end{enumerate}
	
  \end{assume}
\begin{remark}
	Note that pretty good implies that   $|A_G(x)|$ is invertible in $\Bbbk$. Hence pretty good implies rather good. So if  conjecture \ref{con} holds then it in particular holds for  pretty good primes.
 \end{remark}
 
 \begin{remark}\label{rk2.4.2}
 	From \cite[Lemma ~2.2(1)]{AJHR},  if a prime $l$ is rather good for $G$ then it is rather good for all the Levi subgroups. Hence $|A_L(x)|$ is still invertible in $\Bbbk$ for any Levi subgroup $L\subset G$.
 \end{remark}
Recall that nilpotent orbits are even dimensional\cite[~1.4]{CM}.
As a direct consequence of the above lemma, we have the next theorem.

\begin{theorem}\label{th3}
Let $C$ be a nilpotent orbit in $\mf{g}$ and $\mc{L}\in \loc_{,G}(C,\Bbbk)$, then $\h^*_G(\mc{L})$ vanishes in odd degrees.	
\end{theorem}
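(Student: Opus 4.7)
The statement asserts that for a nilpotent orbit $C\subset\mf{g}$ and any $G$-equivariant $\Bbbk$-local system $\mc{L}$ on $C$, the equivariant cohomology $\h^*_G(C,\mc{L})$ vanishes in odd degrees. This is exactly the assumption \eqref{vanish} that is needed so the parity sheaf formalism of Section \ref{parity} applies to stratifications of $\mc{N}_G$ by nilpotent orbits.

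My plan is to translate the equivariant cohomology to the stabilizer and then exploit the pretty good / rather good hypothesis. Pick any basepoint $x\in C$; since $C\cong G/G^x$, there is a canonical identification
\[
\h^*_G(C,\mc{L}) \;\cong\; \h^*_{G^x}(\text{pt},\mc{L}_x) \;=\; \h^*(BG^x,\mc{L}_x),
\]
where the stalk $\mc{L}_x$ is a finite-dimensional $\Bbbk$-representation of $A_G(x)=G^x/(G^x)^\circ$ (the connected component $(G^x)^\circ$ acts trivially on a local system on the orbit). Then I would use the fibration $B(G^x)^\circ\to BG^x\to BA_G(x)$. By Lemma \ref{lm1.4} combined with Assumption \ref{char}(1), $|A_G(x)|$ is invertible in $\Bbbk$, so the Leray spectral sequence collapses and gives
\[
\h^*(BG^x,\mc{L}_x)\;\cong\;\bigl(\h^*(B(G^x)^\circ,\Bbbk)\otimes_\Bbbk\mc{L}_x\bigr)^{A_G(x)}.
\]

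Next, I would replace $(G^x)^\circ$ by its maximal reductive quotient $C_x$. The kernel is unipotent and hence cohomologically trivial, so $\h^*(B(G^x)^\circ,\Bbbk)\cong \h^*(BC_x,\Bbbk)$. Now, again by Lemma \ref{lm1.4}, $l$ is not a torsion prime for $C_x$. Standard structure results (e.g.\ Borel's theorem, or the Leray spectral sequence for $T\hookrightarrow C_x\to C_x/T$, where $T$ is a maximal torus) imply that $\h^*(BC_x,\Bbbk)\cong\h^*(BT,\Bbbk)^{W_{C_x}}$ is a polynomial ring on generators of even degree, and in particular vanishes in odd degrees.

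Putting the pieces together: $\h^*(B(G^x)^\circ,\Bbbk)$ is concentrated in even degrees, $\mc{L}_x$ sits in degree zero, and taking $A_G(x)$-invariants preserves the parity of the grading (invariants are computed degree by degree, and since $|A_G(x)|$ is invertible, this is just a linear-algebraic projection). Therefore $\h^*_G(C,\mc{L})$ is concentrated in even degrees. The only subtle step is confirming that $\h^*(B(G^x)^\circ,\Bbbk)$ really is even-concentrated; this is where the no-torsion input from ``pretty good'' is essential and must be cited carefully (Lemma \ref{lm1.4} gives precisely this for $C_x$), while the unipotent-kernel reduction is standard.
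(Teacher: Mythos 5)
Your argument is correct and is essentially the proof of the cited result: the paper establishes this theorem by appealing to \cite[Lemma~4.17]{parity}, whose proof proceeds exactly as you outline --- quotient equivalence to $BG^x$, collapse of the $BA_G(x)$ spectral sequence since $|A_G(x)|$ is invertible, reduction to the reductive quotient $C_x$, and Borel's theorem under the non-torsion hypothesis. Lemma~\ref{lm1.4} together with Assumption~\ref{char}(1) supplies exactly the two inputs you need, so the approach matches the paper's.
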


\begin{proof}
The proof is given in  \cite[\text{Lemma} ~4.17]{parity}	
\end{proof}

  \begin{theorem}\label{th1.6}
Let $\mc{O}$ be a  $G_0$-orbit in $\mf{g}_n$ and $\mc{L}\in \loc_{,G_0}(\mc{O},\Bbbk)$, then $\h^*_{G_0}(\mc{L})$ vanishes in odd degrees.	\end{theorem}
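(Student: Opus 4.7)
The plan is to parallel the proof of Theorem \ref{th3} (cited as \cite[Lemma 4.17]{parity}) in the graded setting, with $(G_0, \mc{O}, \mf{g}_n)$ playing the role of $(G, C, \mf{g})$. Fix a basepoint $x \in \mc{O}$. Since $\Ad(\chi(t))x = t^n x$ with $n \neq 0$, the element $x$ is nilpotent in $\mf{g}$, and by Theorem \ref{th1} we may complete it to an $\mf{sl}_2$-triple $\phi \in J_n$ whose neutral element lies in $\mf{g}_0$.

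First I would translate to classifying-space cohomology,
\[
\h^*_{G_0}(\mc{O}, \mc{L}) \cong \h^*(BG_0^x, \mc{L}_x),
\]
where $\mc{L}_x$ is the stalk at $x$, viewed as a finite-dimensional representation of the component group $A_{G_0}(x)$. Next I would establish a Jacobson--Morozov style Levi decomposition $G_0^x = R_0 \ltimes U_0$: because $\phi(h) \in \mf{g}_0$, the torus $\chi(\mb{C}^\times)$ normalizes both factors of the usual decomposition $G^x = G^\phi \ltimes U$, and taking $\chi(\mb{C}^\times)$-fixed points yields a reductive group $R_0$ and a connected unipotent group $U_0$. Since $BU_0$ is contractible (as $U_0 \cong \mb{C}^N$ topologically), the Leray spectral sequence for the fibration $BU_0 \to BG_0^x \to BR_0$ reduces the computation to $\h^*(BR_0, \mc{L}_x)$; note that $\mc{L}_x$ descends along $G_0^x \twoheadrightarrow R_0$ because it is trivialized by $(G_0^x)^\circ \supseteq U_0$.

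Finally, since $G_0$ is itself a Levi subgroup of $G$ (the centralizer of the torus $\chi(\mb{C}^\times)$), Remark \ref{rk2.4.2} ensures that $l$ is rather good for $G_0$. Together with Lemma \ref{lm1.4} this would give that $|A_{G_0}(x)|$ is invertible in $\Bbbk$ and that $l$ is not a torsion prime for $R_0^\circ$. One then splits
\[
\h^*(BR_0, \mc{L}_x) \cong \bigl(\h^*(BR_0^\circ) \otimes \mc{L}_x\bigr)^{A_{G_0}(x)},
\]
and invokes the standard fact that $\h^*(BR_0^\circ, \Bbbk)$ vanishes in odd degrees for a connected reductive group with no $l$-torsion; taking $A_{G_0}(x)$-invariants then preserves this vanishing, which is precisely the claim.

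The main obstacle is checking that the pretty-good property of $l$ for $G$ descends to the stabilizer data attached to $x$ in the graded setting, namely that $|A_{G_0}(x)|$ is invertible and $R_0^\circ$ has no $l$-torsion, for $x$ that is nilpotent in $\mf{g}$ but not necessarily in $\lie(G_0)$. The cleanest route is to use the Jacobson--Morozov decomposition of $G_0^x$ established above to verify that the inclusion $G_0^x \hookrightarrow G^x$ induces an injection on component groups, so that $|A_{G_0}(x)|$ divides $|A_G(x)|$, and to check that $R_0^\circ$ is a Levi subgroup of the reductive part $(G^\phi)^\circ$ of $G^x$, which inherits the no-$l$-torsion property.
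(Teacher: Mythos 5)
Your broad outline matches the paper's strategy: reduce to the equivariant cohomology of the reductive part of $(G_0^x)^\circ$, then invoke the no-$l$-torsion criterion from \cite{parity}. The reduction you describe via $BG_0^x$ and the Leray spectral sequence for $BU_0 \to BG_0^x \to BR_0$ is equivalent to the paper's reduction via the Galois cover $\tilde{\mc{O}} = G_0/(G_0^x)^\circ \to \mc{O}$ and quotient equivalence, so that part is fine.

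The real divergence is precisely the step you flag as ``the main obstacle'': transferring the pretty-good hypothesis on $G$ to $(|A_{G_0}(x)|, R_0^\circ)$ when $x \in \mf{g}_n$ is not in $\lie(G_0)$, so Lemma \ref{lm1.4} and Remark \ref{rk2.4.2} cannot be applied directly to $G_0$. Your proposed fix is only a sketch, and it is not the paper's. For the component-group part, you want $A_{G_0}(x) \hookrightarrow A_G(x)$, which amounts to connectedness of the $\chi(\mb{C}^\times)$-fixed locus in $(G^x)^\circ$ --- plausible but uncited and not obvious. The paper instead constructs in \ref{subsec6.1} a specific Levi $L\subset G$ for which $(L,\chi)$ is $n$-rigid with $x$ in the open $L_0$-orbit of $\mf{l}_n$; Proposition \ref{nrigid}(2) gives $A_{L_0}(x)\cong A_L(x)$, Lusztig's result gives $A_{G_0}(x)\cong A_{L_0}(x)$, and Remark \ref{rk2.4.2} for the Levi $L$ then yields invertibility of $|A_{G_0}(x)|$. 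For the torsion-prime part, you claim $R_0^\circ$ is a \emph{Levi} subgroup of $(G^\phi)^\circ$. This is stronger than needed and is doubtful: $\chi(\mb{C}^\times)$ does not lie in $G^\phi$, so it acts by automorphisms that need not be inner, and fixed points of a torus of outer automorphisms on a reductive group need not form a Levi. The paper proves only that $(G_0^x)^\circ\text{-red}$ is a \emph{regular} subgroup of $(G^x)^\circ\text{-red}$ --- which suffices for transferring the torsion-prime condition --- by analyzing maximal tori through the cocharacter $\psi : t \mapsto (t^n, \chi(t))$ into $\mb{C}^\times \times G$ and the isomorphism $(\mb{C}^\times \times G)^x \cong \mb{C}^\times\ltimes G^x$. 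So the skeleton is right, but the hardest step is left as a conjectural sketch, and that sketch (especially the Levi claim) would require substantial repair or replacement by the paper's $n$-rigidity and regular-subgroup arguments.
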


The proof of this theorem will be given in section 6.

Now, talking about parity sheaves makes sense both in $G$- and $G_0$-equivariant settings as we know (\ref{vanish}) is true for $\mc{N}_G$ and $\mf{g}_n$.

\begin{conjecture}\label{2.6(c)}
Let $P$ be a parabolic subgroup of $G$ and $L$ be its Levi subgroup. For a pair $(C,\mc{E})\in \ms{I}(L)^{0-\cu}$, $\Ind^G_P\mathcal{IC}(C,\mathcal{E})$ is a parity complex.
\end{conjecture}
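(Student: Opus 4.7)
The plan is to combine Mautner's cleanness conjecture with the standard behavior of parity complexes under the three steps of the parabolic induction diagram. Since $(C,\mc{E}) \in \ms{I}(L)^{0-\cu}$ and $l$ is pretty good (hence rather good for $G$ and, by Remark \ref{rk2.4.2}, for $L$), Conjecture \ref{con} applies on $L$ and $\mc{IC}(C,\mc{E})$ is $l$-clean. Writing $j_C : C \hookrightarrow \mc{N}_L$ for the inclusion, this yields $\mc{IC}(C,\mc{E}) \simeq (j_C)_!\mc{E}[\dim C]$. Since $C$ is even-dimensional and Theorem \ref{th3} gives the parity vanishing condition (\ref{vanish}) for $L$-equivariant local systems on nilpotent orbits, $\mc{IC}(C,\mc{E})$ is in fact an $L$-equivariant parity complex on $\mc{N}_L$.

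The next step is to trace this parity complex through
\[\mc{N}_L \xleftarrow{\pi_P} \mc{N}_L+\mf{u}_P \xrightarrow{e_P} G\times^P(\mc{N}_L+\mf{u}_P) \xrightarrow{\mu_P} \mc{N}_G.\]
The pullback $\pi_P^*[\dim \mf{u}_P]$ along the $P$-equivariant affine bundle $\pi_P$ preserves parity. The induction equivalence $(e_P^*\f^G_P)^{-1}$ is an equivalence of triangulated categories and so preserves parity tautologically, once one verifies that the stratification of $G\times^P(\mc{N}_L+\mf{u}_P)$ by $G$-translates of $L$-orbits satisfies (\ref{vanish}); this in turn reduces, via the $L$-equivariant contraction $\mc{O}_L+\mf{u}_P \to \mc{O}_L$ and a $G$-versus-$P$-equivariant cohomology comparison, back to Theorem \ref{th3}. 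Finally $\mu_P$ is proper because its fibers embed in the complete variety $G/P$, and the $G$-orbit stratification of $\mc{N}_G$ satisfies (\ref{vanish}) again by Theorem \ref{th3}; the general fact that proper pushforward preserves parity complexes (see \cite{parity}) then completes the argument.

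The main obstacle --- and apparently the reason this statement is being left as a conjecture rather than proved --- is the rigorous verification of the proper-pushforward step in the modular equivariant setting. In characteristic $0$ the decomposition theorem displays $\Ind^G_P \mc{IC}(C,\mc{E})$ as a direct sum of shifts of $\mc{IC}$-sheaves, each of which is clean (hence parity) by the characteristic-$0$ version of Conjecture \ref{con}; this recovers Lusztig's picture. In positive characteristic the decomposition theorem can fail, so one must check $*$-parity and $!$-parity of $(\mu_P)_! \bigl((e_P^*\f^G_P)^{-1}\pi_P^*\mc{IC}(C,\mc{E})\bigr)$ on every $G$-orbit of $\mc{N}_G$ directly. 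A natural route is modular reduction (Subsection 2.3): lift $(C,\mc{E})$ to an $\mb{O}$-form, invoke the characteristic-$0$ result on the generic fiber, and then show that parity is preserved under the special-fiber functor. The delicate compatibility of $\Ind^G_P$ with modular reduction at the integral level is presumably what the author verifies case-by-case for $Sp_4$ and $SL_4$ in Section \ref{sec10}.
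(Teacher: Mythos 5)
The statement you are trying to prove is explicitly left as a \emph{conjecture} in the paper; the author writes ``In positive characteristic, the result is still unknown. Throughout this paper we will assume this result is true,'' and only verifies it by hand for $Sp_4$ and $SL_4$ in Section \ref{sec10}. So there is no paper proof to match, and a complete blind proof here would itself be a genuinely new result.

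The critical gap in your argument is the claim that ``the general fact that proper pushforward preserves parity complexes (see \cite{parity}) then completes the argument.'' No such general fact exists. The result in \cite{parity} (Proposition 2.34) is that proper pushforward preserves parity \emph{for even morphisms}, i.e.\ morphisms whose fibers satisfy a parity-vanishing condition analogous to (\ref{vanish}). For $\mu_P: G\times^P(\mc{N}_L+\mf{u}_P)\to\mc{N}_G$, verifying this evenness over $\Bbbk$ is exactly the hard content of the conjecture, not a known fact one can quote. In characteristic $0$ one sidesteps it entirely with the decomposition theorem plus Lusztig's cleanness, as the paper notes; in positive characteristic the decomposition theorem fails and the evenness of $\mu_P$ (equivalently, even cohomology of the fibers with coefficients in the relevant local systems over $\Bbbk$) is wide open. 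Your first two steps (cleanness of $\mc{IC}(C,\mc{E})$ via Conjecture \ref{con}, and parity being preserved by the affine pullback and the induction equivalence) are fine, but they are the easy part.

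Your suggested fallback via modular reduction also does not close the gap. The direction used in the paper (Theorem \ref{th6.1}, invoking \cite[Prop.\ ~2.37]{parity}) is: \emph{given} that $\mb{K}\otimes_\mb{O}\mc{F}_\mb{O}$ and $\Bbbk\otimes^L_\mb{O}\mc{F}_\mb{O}$ are both parity, one may conclude $\mc{F}_\mb{O}$ is parity over $\mb{O}$. To go the other way --- from $\mb{K}$-parity to $\Bbbk$-parity --- you would need to control torsion in $\Ind^G_P\mc{IC}(C,\mc{G}_\mb{O})$, and that torsion obstruction is essentially the same difficulty as the evenness of $\mu_P$. So the proposal does not prove the conjecture; it re-expresses the unknown step in two equivalent forms.
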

 In characteristic $0$, the proof follows from the decomposition theorem and \cite[~24.8]{Lu2}.	
In positive characteristic, the result is still unknown. Throughout this paper we will assume this result is true. In the last section we will give some example where the conjecture holds.

\section{Lemmas on varieties with $\mb{C}^\times$-action}\label{sec3}

Let $X$ be a variety defined over $\mathbb{C}$ and $G$ be a connected linear algebraic group which acts on $X$. In this section all the sheaf coefficients will be considered over $\Bbbk$ whose characteristic satisfies assumption \ref{char}.
\begin{lemma}\label{lm5}
 If $\mathcal{F} \in D^b_G(X)$ and $\overline{\mathcal{F}}= \f(\mathcal{F}) \in D^b_c(X)$, then $\h^*_c(\overline{\mathcal{F}})=0$ implies $\h^*_{G,c}(\mathcal{F})=0$.
 Equivalently, $R\Gamma_c (\overline{\mathcal{F}})=0 $ implies $R\Gamma_c(\mathcal{F})=0$
 \end{lemma}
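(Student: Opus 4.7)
My plan is to present the equivariant compactly-supported cohomology as a cosimplicial totalization and then reduce to a K\"unneth computation on the factors $G^n\times X$. I would start by invoking the bar-style simplicial resolution
\[
X\longleftarrow G\times X \leftleftarrows G\times G\times X \ \cdots
\]
(modelling $X\times_G EG$), whose face maps are built from the action map $a\colon G\times X\to X$, the projection $G\times X\to X$, and the multiplication on $G$. Since $\mc{F}\in D^b_G(X)$ is equivariant, the complexes $\mc{F}_n$ it induces on $G^n\times X$ are all pulled back from $\overline{\mc{F}}$ along the projection, and hence factor as the external tensor product $\mc{F}_n\cong\underline{\Bbbk}_{G^n}\boxtimes\overline{\mc{F}}$. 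Following the standard Bernstein--Lunts construction, $R\Gamma_{G,c}(X,\mc{F})$ is then computed as the totalization of the cosimplicial object $[n]\mapsto R\Gamma_c(G^n\times X,\mc{F}_n)$.

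Next I would apply the K\"unneth formula for compactly-supported cohomology to every level of this cosimplicial complex, obtaining
\[
R\Gamma_c(G^n\times X,\mc{F}_n)\ \cong\ R\Gamma_c(G^n,\underline{\Bbbk})\otimes^{L}_{\Bbbk} R\Gamma_c(X,\overline{\mc{F}}).
\]
By hypothesis the second tensor factor vanishes, so every term of the cosimplicial complex is zero, and hence so is its totalization. This gives $R\Gamma_{G,c}(X,\mc{F})=0$, equivalently $\h^*_{G,c}(\mc{F})=0$, as claimed.

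The main obstacle, as I see it, lies in the first step: justifying that $R\Gamma_{G,c}(-)$ is indeed computed by this bar-resolution totalization. This is part of the standard foundations of the equivariant derived category, but it requires pinning down the definition of $D^b_G(X)$ via finite-dimensional approximations to $EG$, and checking compatibility of the forgetful functor $\f$ with the structure map pushforward $p_!\colon D^b_G(X)\to D^b_G(\mathrm{pt})$. Once that identification is in place, the K\"unneth reduction above carries out the argument with essentially no further work. A slicker, if equivalent, alternative would be to note that $\f$ commutes with $p_!$ directly, so $\f(R\Gamma_c(X,\mc{F}))=R\Gamma_c(X,\overline{\mc{F}})=0$, and then invoke conservativity of $\f$ on the equivariant derived category of a point to conclude.
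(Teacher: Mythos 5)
Your ``slicker alternative'' at the end is exactly the paper's proof: the forgetful functor $\f$ commutes with $R\Gamma_c$ (pushforward along $X\to\mathrm{pt}$), so $\f(R\Gamma_c\mc{F})=R\Gamma_c(\overline{\mc{F}})=0$, and conservativity of $\f$ on $D^b_G(\mathrm{pt})$ --- proved in the paper by truncating and inducting on the number of nonzero cohomology sheaves --- then gives $R\Gamma_c(\mc{F})=0$. That argument is clean and complete; you should lead with it rather than tucking it into a closing remark. The bar-resolution/K\"unneth argument you present first has a real gap, which you do correctly flag: in the Bernstein--Lunts framework, equivariant compactly supported cohomology is defined via finite-dimensional $n$-acyclic approximations to $EG$, not via the totalization of the full bar cosimplicial object, and one must show these agree --- which is delicate precisely because $R\Gamma_c$ does not commute with infinite homotopy limits in general. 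The level-by-level K\"unneth reduction and the observation that a totalization of a zero cosimplicial object is zero are fine, but the identification of $R\Gamma_{G,c}$ with that totalization is an assertion, not yet a proof, and it is exactly the kind of bookkeeping that the conservativity argument sidesteps. In short: the ``alternative'' is the proof; the main body is a heavier route whose first step still needs to be filled in.
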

 \begin{proof}
 	$R\Gamma_c(\overline{\mathcal{F}})= For(R\Gamma_c\mathcal{F}) $. Now the argument follows from the following statement, for $X$  as defined above and 
 $\mathcal{M} \in D^b_G(X)$, if $\f(\mathcal{M})=0$ then $\mathcal{M}=0$. The proof of this statement easily follows using induction on the number of nonzero cohomology and truncation. 

 \end{proof}
 
\begin{lemma}\label{lm6}
	   Let $V$ be a finite-dimensional vector space with a nontrivial linear $\mb{C}^\times$-action on it. Then there exists a nonzero vector with a stabilizer of minimum size and  there exists a nonzero vector with a  stabilizer of maximum size among those with finite stabilizers. 
\end{lemma}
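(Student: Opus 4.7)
The plan is to pass to the weight-space decomposition of $V$ under $\mb{C}^\times$. Since $\mb{C}^\times$ is diagonalizable acting linearly on a finite-dimensional space, $V = \bigoplus_{n \in \mb{Z}} V_n$ with $t \cdot v = t^n v$ on $V_n$; the nontriviality hypothesis together with finite-dimensionality ensures that the set $N := \{n \neq 0 : V_n \neq 0\}$ is a finite, nonempty collection of integers.

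My first step would be to rewrite the stabilizer of a nonzero $v = \sum_n v_n$ in terms of its support $S(v) := \{n : v_n \neq 0\}$. Unwinding $t \cdot v = v$ componentwise, $\stab(v) = \{t \in \mb{C}^\times : t^n = 1 \text{ for all } n \in S(v) \setminus \{0\}\}$, which equals $\mb{C}^\times$ when $S(v) \subseteq \{0\}$ (that is, $v \in V_0$) and otherwise equals the group $\mu_{d(v)}$ of $d(v)$-th roots of unity, where $d(v) := \gcd\{|n| : n \in S(v) \setminus \{0\}\}$. This reduces the problem to controlling the possible gcds of finite subsets of $N$.

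For the minimum, set $d_0 = \gcd(N)$. Since $d_0$ divides $d(v)$ for every $v$ with finite stabilizer, $d_0$ is a lower bound on the finite stabilizer sizes; and because $N$ is already finite, any vector $v$ supported exactly on $N$ (with nonzero component in each $V_n$, $n \in N$) realizes it, giving $\stab(v) = \mu_{d_0}$. For the maximum finite stabilizer size, let $M = \max\{|n| : n \in N\}$. Since $d(v)$ divides each nonzero element of $S(v)$, it is bounded above by the minimum of $|n|$ over $S(v)\setminus\{0\}$ and hence by $M$; equality is attained by any nonzero $v \in V_n$ with $|n| = M$, for which $\stab(v) = \mu_M$.

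I do not expect a real obstacle here; the argument is essentially a bookkeeping exercise once the weight decomposition is in hand. The only conceptual point worth flagging is that enlarging the support can only shrink the gcd, so the two extremes are realized by opposite kinds of vectors: one spread over as many weight spaces as possible for the minimum, and a single pure weight vector of heaviest weight for the maximum.
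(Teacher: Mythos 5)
Your argument is correct and follows essentially the same route as the paper: decompose $V$ into $\mb{C}^\times$-weight spaces and read off stabilizers from the supported weights, concluding that a single top-weight eigenvector realizes the maximum finite stabilizer. The only difference is that you make the existence of a minimum explicit (constructing a vector supported on all of $N$ with $\stab = \mu_{\gcd(N)}$), whereas the paper simply asserts that a minimum-sized stabilizer exists because stabilizer orders form a set of positive integers.
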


\begin{proof}	   As $\mb{C}^\times$ acts on a vector space $V$,  we have a grading on $V$ given by,
   \[
   V=\bigoplus_{\lambda\in \mb{Z} }V_\lambda,
   \] where $V_\lambda$'s are eigenspaces of the $\mb{C}^\times $-action.
    Note that  stabilizers are subgroups of $\mb{C}^\times$. If they are finite then they are cyclic and  of the form $\mb{Z}/n\mb{Z}$. So it is obvious that there exists an element of minimum-sized stabilizer.
    Now let $v\in V$ and $v\notin V_0$. Then we can write $v$ as:
$v=\oplus v_{\lambda}$, where $v_{\lambda}\in V_\lambda$. By definition $t.v_\lambda=t^\lambda v_\lambda$. Hence $t\in \stab(v_\lambda)$ if and only if $t$ is a $\lambda$-root of unity. In other words, $|\stab(v_\lambda)|=|\lambda|$ for $\lambda \ne 0$. Now if $t$ is in $\stab(v)$, then $t$ must stabilize all the $v_\lambda$'s.  Hence $t$ must be $c$-th root of unity where $c$ divides $\lambda$ for all $\lambda\ne 0$.  Hence $\max{|\stab(v)|}=\max\{|\lambda| | V_{\lambda} \ne 0\}$.
\end{proof}
 \begin{lemma}\label{lm7}
 Let $\mathbb{C}^\times$ acts on $Y$, a variety over $\mathbb{C}$, with finite stabilizers. Assume that all the stabilizers have order not divisible by $l$, where $l$ is the characteristic of $\Bbbk$. Then for any object $\mathcal{F} \in D^b_{\mathbb{C}^\times}(Y,\Bbbk)$, $\dim(\h^*_{\mathbb{C}^\times}(Y,\mathcal{F}))< \infty$.
\end{lemma}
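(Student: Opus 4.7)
The strategy is to trade $\mb{C}^\times$-equivariant cohomology of $Y$ for ordinary constructible cohomology of a coarse quotient variety, exploiting the fact that all stabilizer orders are invertible in $\Bbbk$. My plan is the following.

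First, a devissage. Using the truncation triangles, the statement for an arbitrary object of $D^b_{\mb{C}^\times}(Y,\Bbbk)$ reduces to the case where $\mc{F}$ is a single $\mb{C}^\times$-equivariant constructible sheaf, and then, using open-closed distinguished triangles together with Noetherian induction on $\mathrm{supp}(\mc{F})$, to the case where $\mc{F}=\mc{L}$ is a $\mb{C}^\times$-equivariant local system of finite rank on a smooth, locally closed, $\mb{C}^\times$-stable subvariety $U\subset Y$. After shrinking $U$ so that the stabilizer is constant, one may assume $U$ admits a $\mb{C}^\times$-action whose stabilizer is a single finite cyclic group $\Gamma=\mu_n\subset\mb{C}^\times$, with $\gcd(n,l)=1$ by hypothesis.

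Second, exploit the exact sequence $1\to\Gamma\to\mb{C}^\times\to T\to 1$ where $T=\mb{C}^\times/\Gamma\cong\mb{C}^\times$ acts freely on $U$. Because $|\Gamma|$ is invertible in $\Bbbk$, the functor of $\Gamma$-invariants is exact on $\Bbbk[\Gamma]$-modules, so the Hochschild–Serre/Leray spectral sequence for this extension collapses in the expected way and yields a canonical isomorphism $\h^*_{\mb{C}^\times}(U,\mc{L})\cong \h^*_T(U,\mc{L}^{\Gamma})$. Since $T$ acts freely on $U$, the geometric quotient $q:U\to U/T$ exists (as a variety, locally a principal $\mb{C}^\times$-bundle), and equivariant cohomology for the free action coincides with ordinary cohomology of the quotient: $\h^*_T(U,\mc{L}^\Gamma)\cong\h^*(U/T,\bar{\mc{L}})$, where $\bar{\mc{L}}$ is the finite-rank $\Bbbk$-local system on $U/T$ descended from $\mc{L}^\Gamma$. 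Since $U/T$ is a finite-type variety over $\mb{C}$ and $\bar{\mc{L}}$ is constructible, $\h^*(U/T,\bar{\mc{L}})$ is finite-dimensional by the classical constructibility theorem. Reassembling through the distinguished triangles of the first step preserves finite-dimensionality.

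The main technical point will be making the reduction $\h^*_{\mb{C}^\times}\cong\h^*_T\circ (-)^\Gamma$ rigorous inside the Bernstein–Lunts equivariant derived category with positive-characteristic coefficients; this is exactly where the invertibility hypothesis $\gcd(|\Gamma|,l)=1$ is used, to ensure that the pushforward along the étale-locally-finite quotient map $U\to U/\Gamma$ is exact on $\Bbbk$-sheaves and induces an isomorphism on equivariant cohomology. Everything else — the existence of the geometric quotient of a free $\mb{C}^\times$-action, the comparison of equivariant and ordinary cohomology for a free action, and the finiteness of constructible cohomology on a finite-type variety — is standard, so once this transfer is in place, the lemma follows.
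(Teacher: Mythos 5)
Your proposal is correct and follows essentially the same line as the paper's proof: Noetherian induction reducing to a stratum on which the finite stabilizer is a constant group $\Gamma\subset\mb{C}^\times$, then passing to the free action of $\mb{C}^\times/\Gamma$ and using $\gcd(|\Gamma|,l)=1$ to eliminate the finite-group contribution so that equivariant cohomology becomes ordinary constructible cohomology of the quotient. The paper packages the last step through the Bernstein--Lunts pushforward $Q_{\mathrm{id}*}$ and semisimplicity of $\Bbbk[\Gamma]$ (and works with arbitrary complexes rather than devissaging to local systems), whereas you use the Leray/Hochschild--Serre spectral sequence and $\Gamma$-invariants, but this is the same underlying mechanism.
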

\begin{proof}
   For a general variety $Y$,  as all the subgroups of $\mathbb{C}^\times$ are finite,  there exists a stabilizer of minimum size, say $n$. Let $U= \{y\in Y,  |\stab(y)|= n\}$.  By the Sumihiro embedding theorem  \cite{Su}, we can cover $Y$ by $\mathbb{C}^\times$  invariant subvarieties, each of which  is equivariantly isomorphic to a $\mathbb{C}^\times$-invariant closed subvariety of $\mathbb{A}^N$ for some $N$, on which $\mathbb{C}^\times$ acts linearly. 
Now this action is not trivial as we have already  assumed that $\mb{C}^\times$ acts nontrivially on $Y$ with finite stabilizers. Hence, by Lemma \ref{lm6}, these stabilizers will have maximum size. Now the claim is that $U$ is open. Let $Z=U^c=\{y\in Y| |\stab(y)|>n\}$. As the maximum sized stabilizer exists,  we can choose a finite subgroup $M$ of $\mathbb{C}^\times$ which contains all the stabilizers. Let $m\in M$ which is not in the minimum sized stabilizers. Then $Z_m=\{y\in Y |m\in\stab(y)\}$ is closed. The collection $\{Z_m\}$ is finite as $m$ is coming from a finite subgroup $M$. Also $Z=\cup_m Z_m$, finite union of closed sets, hence is closed. So $U$ is open.
 
 So we have the open and closed embeddings,  
 \[
 Z\xhookrightarrow{i}Y\xhookleftarrow{j}U\] which give us the distinguished triangle,
 \begin{equation}\label{5}
i_*i^!\mathcal{F}\to\mathcal{F}\to j_*j^*\mathcal{F}\to\hspace{2mm}.
 \end{equation}

 Now we are at a place to use Noetherian induction on $Y$. The theorem is true for the empty set. So we can  assume that it is true for all the proper closed subvarieties of $Y$, particularly for $Z$.
Now for $u\in U$, let $K=\stab(u)=\mathbb{Z}/n\mathbb{Z}$. Let $H=\mathbb{C}^\times/K$, then $H$ acts freely on $U$. 

Let $\mc{F}\in D^b_{\mb{C}^\times}(U,\Bbbk)$. The goal is to show $\dim \h^*_{\mb{C}^\times}(\mc{F})<\infty$.
According to \cite[ Sec ~6]{BL}, if $G'$ and $G$ are two topological groups acting on two varieties $X$ and $Y$ respectively, and $\phi:G'\to G$ be a homomorphism of topological groups with $f:X\to Y$, a $\phi$-equivariant map, then there exist two functors,
\[Q^*_f:D^+_G(Y)\to D^+_{G'}(X)\] and
\[Q_{f*}:D^+_{G'}(X)\to D^+_G(Y).\]Here $Q^*_f$ and $Q_{f*}$ are adjoint to each other.
In our case, we take both $X$ and $Y$ to be $U$ and $G'=\mb{C}^\times$,  $G=H$ and we take $f$ to be the identity. So we have,
\[Q_{id*}: D^+_{\mb{C}^\times}(U) \to D^+_H(U).\]
Therefore if $\mc{F}\in D^b_{\mb{C}^\times}(U)$, then $\Hom^*_{D^+_H(U)}(\underline{\Bbbk}, Q_{id*}\mc{F})\cong \Hom^*_{D^+_{\mb{C}^\times}(U)}(\underline{\Bbbk},\mc{F})$.
Now if we can show $Q_{id*}\mc{F}$ is in $D^b_H(U)$, then as we already know $D^b_H(U)\cong D^b_c(U/H)$(non-equivariant) as $H$ acts freely on $U$, so $\dim (\Hom^*_{D^b_c(U/H)}(\underline{\Bbbk},Q_{id*}\mc{F}) )< \infty$.
The next following fact is from \cite[~ 7.3]{BL}.
For $\mc{F} \in D^b_{\mb{C}^\times}(U) \subset D^+_{\mb{C}^\times}(U)$, 
we have the commutative diagram below,
\[
\begin{tikzcd}
D^+_{\mb{C}^\times}(U) \arrow[r, "Q_{id*}"] \arrow[d, "\f^{\mb{C}^\times}_K"] & D^+_{\mb{C}^\times/K}(U) \arrow[d, "\f^{\mb{C}^\times/K}"] \\
D^+_K(U) \arrow[r, "Q_{id*}"]                     & D^+(U)                            
\end{tikzcd}
\] 
Therefore we have $\f^{\mb{C}^\times/K}Q_{id*}\mc{F}\cong Q_{id*}\f^{\mb{C}^\times}_K \mc{F}$. Let $\mc{G}=Q_{id*}\mc{F}$. So $\mc{G}$ is bounded if and only if $\f^H\mc{G}$ is bounded.  As $\mc{F}$ is from bounded derived category then so is $\f^{\mb{C}^\times}_K\mc{F}$. Therefore to show $\f^H\mc{G}$ is bounded it is enough to show,
\[Q_{id*}:D^b_K(U) \to D^+(U)\] takes values in $D^b(U)$. As $K$ is finite hence discrete, so by \cite[Cor ~8.4.2]{BL}, $D^b_K(U)\cong D^bSh_K(U)$. But we know $ Sh_K(U,\Bbbk) \cong Sh(U,\Bbbk[K])$, where $\Bbbk[K]$  is a commutative semisimple ring as $l\nmid |K|$. By the same corollary, $Q_{id*}$ corresponds to a exact functor. So we can conclude $Q_{id*}$ takes $D^b_K(U)$ to $D^b(U)$.
 Now coming back to our actual proof, if we apply ${a_X}_*$ to (\ref{5}), where $a_X :X \to \{pt\}$, we get,
\[
\h^*_{\mb{C}^\times}(i^!\mc{F})\to \h^*_{\mb{C}^\times}(\mc{F})\to \h^*_{\mb{C}^\times}(j^*\mc{F})\to
\]As $\dim(\h^*_{\mb{C}^\times}(i^!\mc{F})<\infty$ by induction hypothesis and $\dim(\h^*_{\mb{C}^\times}(j^*\mc{F}))<\infty$ by the above result. Hence $\dim(\h^*_{\mb{C}^\times}(\mc{F}))<\infty$.
\end{proof}
\begin{lemma}\label{lm8}
	Let $X$ be a variety over $\mb{C}$ and $H$ be a connected linear algebraic group acting trivially on $X$. Then for $\mc{F}\in D^b_H(X,\Bbbk)$,
	\[
	\h^*_{H,c}(X,\mc{F})\cong \h^*_c(X,\mc{F})\otimes\h^*_{H}(pt,\Bbbk).
	\]
\end{lemma}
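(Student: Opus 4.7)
The plan is to use the Borel construction and a projection-formula argument. Since $H$ acts trivially on $X$, the Borel construction $X\times_H EH$ is canonically identified with the product $X\times BH$. Let $p:X\times BH\to X$ and $q:X\times BH\to BH$ denote the two projections. In the Bernstein--Lunts formalism, $\mc{F}\in D^b_H(X)$ corresponds on $X\times BH$ to the pullback $p^*\f(\mc{F})\cong \f(\mc{F})\boxtimes\underline{\Bbbk}_{BH}$, and equivariant compactly supported cohomology is computed by
\[
\h^*_{H,c}(X,\mc{F})\;\cong\;\h^*\bigl(BH,\;Rq_!\,p^*\f(\mc{F})\bigr),
\]
where compact support is taken only in the $X$-direction (as in the standard BL convention for $\h^*_{H,c}$).

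By the projection formula applied to $q$, we get $Rq_!\,p^*\f(\mc{F})\cong \rg_c(X,\f(\mc{F}))\otimes_\Bbbk\underline{\Bbbk}_{BH}$, i.e., a constant complex on $BH$ whose stalk is the bounded complex $\rg_c(X,\f(\mc{F}))$. Taking cohomology and using that $\Bbbk$ is a field (so that the Künneth-type identity $\h^*(BH,\,V\otimes\underline{\Bbbk}_{BH})\cong V\otimes\h^*(BH,\Bbbk)$ holds for any bounded $\Bbbk$-complex $V$) yields
\[
\h^*_{H,c}(X,\mc{F})\;\cong\;\rg_c(X,\f(\mc{F}))\otimes_\Bbbk\h^*(BH,\Bbbk)\;=\;\h^*_c(X,\mc{F})\otimes_\Bbbk\h^*_H(pt,\Bbbk),
\]
which is the desired formula.

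To make this argument rigorous I would pass to the finite-dimensional algebraic approximations $EH_n$ used in Bernstein--Lunts (with smooth free quotients $BH_n$ of complex dimension $d_n$), so that $X\times BH_n$ is a genuine algebraic variety; apply the ordinary Künneth formula $\h^*_c(X\times BH_n)\cong\h^*_c(X)\otimes_\Bbbk\h^*_c(BH_n)$; convert $\h^*_c(BH_n)$ into $\h^*(BH_n)$ via Poincaré duality on the smooth $BH_n$ (the dimensional shift being absorbed into the degree convention defining $\h^*_{H,c}$); and let $n\to\infty$, so that $\h^*(BH_n)$ stabilizes in each fixed degree to $\h^*(BH)=\h^*_H(pt,\Bbbk)$. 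The main obstacle is identifying the descent of $\mc{F}$ along the free quotient map $X\times EH_n\to X\times BH_n$ with $p^*\f(\mc{F})$; this relies crucially on the triviality of the $H$-action on $X$, and is seen most transparently after a devissage reducing to the case where $\mc{F}$ is a single equivariant local system on a stratum, for which the equivariant structure and the descent are controlled directly.
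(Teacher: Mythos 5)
Your core argument matches the paper's: both proofs reduce to the Borel construction $X\times BH_n$ (a genuine product because $H$ acts trivially on $X$), identify the descent of $\mc{F}$ to the approximation stage with the external product $\f(\mc{F})\boxtimes\underline{\Bbbk}_{BH_n}$, and then invoke a K\"unneth computation. Your ``projection formula for $Rq_!$'' is the same step as the paper's commutation of $\rg$ with $\boxtimes$; the paper then just expands the resulting tensor product of complexes over the field $\Bbbk$.

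Where your proposal goes astray is in the ``to make this rigorous'' paragraph. In the Bernstein--Lunts convention, and as the paper uses it, $\h^*_{H,c}(X,\mc{F})=\Hom^*_{D^b_H(pt)}(\underline{\Bbbk},a_!\mc{F})$, which at the $n$-th approximation becomes $\h^*(BH_n,Rq_!\mc{F}_n)$ --- \emph{ordinary} cohomology of $BH_n$ applied to the pushforward that is proper only in the $X$-direction. It is not $\h^*_c(X\times BH_n,\mc{F}_n)$. The two happen to coincide when $BH_n$ is proper (as for $H=\mb{C}^\times$, with $BH_n=\mb{P}^{n-1}$), in which case your Poincar\'e duality step is vacuous; when $BH_n$ is not proper, duality gives $\h^i_c(BH_n)\cong\h^{2\dim_{\mb{C}}BH_n-i}(BH_n)^{\vee}$, carrying a growing shift and a dualization that are not ``absorbed'' by any BL degree convention, and after which the K\"unneth sum is indexed by $j-k$ rather than $j+k$. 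So the Poincar\'e duality detour should be dropped: your first two paragraphs, unwinding $\Hom^*_{D^b_H(pt)}(\underline{\Bbbk},a_!\mc{F})$ via the approximation and the projection formula, are already the rigorous argument and agree with the paper. Your flagging of the identification $\mc{F}_n\cong p^*\f(\mc{F})$ as the genuine technical point is fair; the paper asserts it without comment, and it is not automatic for arbitrary $\mc{F}\in D^b_H(X)$, so the devissage to equivariant local systems on strata that you suggest is the natural way to justify it in the cases where the lemma is actually applied.
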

\begin{proof}
Let $U_n\to \{pt\}$ be the $n$-acyclic resolution for $\{pt\}$. Then $U_n\times X\to X$ is the $n$-acyclic resolution of $X$. By \cite[ ~2.1]{BL}, if  $\mc{F}\in D^b_H(X,\Bbbk)$  this implies $\mc{F}\boxtimes\underline{\Bbbk}_{U_n/H} \in D^b_c(X \times U_n/H,\Bbbk)$ such that for $i<n$,
\begin{align*}
\h^i_{H,c}(X,\mc{F})&\cong \Hom_{D^b_H(pt,\Bbbk)}(\underline{\Bbbk}_{pt},{a_X}_!\mc{F}[i])\\
& \cong \Hom_{D^b_c(U_n/{H}\times X)}(\underline{\Bbbk},\underline{\Bbbk}\boxtimes {a_X}_!\mc{F}[i])\\
&\cong \h^i(\rg(\underline{\Bbbk}_{U_n/H}\boxtimes {a_X}_!\mc{F}).
\end{align*}
As for constructible sheaves all sheaf functors commute with $\boxtimes$.    Therefore we have for $i<n$,
\begin{align*}
\h^i(\rg(\underline{\Bbbk}_{U_n/H}\boxtimes \mc{F}))
&\cong \bigoplus_{j+k=i} \h^j(\rg(\underline{\Bbbk}_{U_n/H}))\otimes \h^k(\rg(\mc{F}))\\
&\cong \bigoplus_{j+k=i}\h^j_H(pt,\Bbbk)\otimes \h^k(X,\mc{F}).
\end{align*}
So we are done.

\end{proof}

\begin{lemma}\label{lm9}
Let $Y$ be an algebraic variety over $\mb{C}$ and $Y_0$ be the fixed point set of this action. Assume that $\mb{C}^\times$ acts on $Y-Y_0$ with finite stabilizers and all the stabilizers of $Y-Y_0$ have order not divisible by $l$. Let $\mc{F}\in D^b_{\mb{C}^\times}(Y,\Bbbk)$. 
If          $\h^j_c(Y,\mc{F})=0$ for all $j$, then $\h^j_c(Y_0,\mc{F})=0$ for all $j$.
	
\end{lemma}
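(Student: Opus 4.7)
The plan is to pass to $\mb{C}^\times$-equivariant compactly supported cohomology and exploit that the action is trivial on $Y_0$, so that Lemma~\ref{lm8} together with the infinite-dimensionality of $\h^*_{\mb{C}^\times}(pt,\Bbbk)$ forces the desired non-equivariant vanishing. Specifically, let $i : U := Y - Y_0 \hookrightarrow Y$ and $j : Y_0 \hookrightarrow Y$ be the equivariant open and closed inclusions, and apply $R\Gamma_c$ in $D^b_{\mb{C}^\times}(Y,\Bbbk)$ to the triangle $i_! i^* \mc{F} \to \mc{F} \to j_* j^* \mc{F} \xrightarrow{+1}$ to obtain
\[
R\Gamma_c^{\mb{C}^\times}(U, \mc{F}|_U) \to R\Gamma_c^{\mb{C}^\times}(Y, \mc{F}) \to R\Gamma_c^{\mb{C}^\times}(Y_0, \mc{F}|_{Y_0}) \xrightarrow{+1}.
\]
By Lemma~\ref{lm5} the hypothesis $\h^*_c(Y,\mc{F}) = 0$ forces the middle term to vanish, so the associated long exact sequence gives isomorphisms $\h^j_{\mb{C}^\times,c}(Y_0,\mc{F}|_{Y_0}) \cong \h^{j+1}_{\mb{C}^\times,c}(U,\mc{F}|_U)$ for every $j$.

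Next I would show that $\h^*_{\mb{C}^\times,c}(U,\mc{F}|_U)$ is finite-dimensional. The $\mb{C}^\times$-action on $U$ has finite stabilizers of order coprime to $l$, so the Noetherian induction used to prove Lemma~\ref{lm7} applies essentially verbatim with $R\Gamma$ replaced by $R\Gamma_c$: one covers $U$ by $\mb{C}^\times$-invariant affine pieces via the Sumihiro embedding and reduces to the open stratum of minimum stabilizer $K$, on which $H := \mb{C}^\times / K$ acts freely and the equivalence $D^b_H(-) \cong D^b_c(-/H)$ identifies the equivariant compactly supported cohomology with that of a constructible complex on a finite-type variety, which is automatically finite-dimensional.

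Combining these two steps, $\h^*_{\mb{C}^\times,c}(Y_0,\mc{F}|_{Y_0})$ is finite-dimensional. Since $\mb{C}^\times$ acts trivially on $Y_0$, Lemma~\ref{lm8} gives
\[
\h^*_{\mb{C}^\times,c}(Y_0,\mc{F}|_{Y_0}) \cong \h^*_c(Y_0,\mc{F}|_{Y_0}) \otimes_{\Bbbk} \h^*_{\mb{C}^\times}(pt,\Bbbk),
\]
where $\h^*_{\mb{C}^\times}(pt,\Bbbk) \cong \Bbbk[c]$ with $|c|=2$ is infinite-dimensional; any nonzero class in $\h^p_c(Y_0,\mc{F}|_{Y_0})$ would then produce nonzero classes in every degree $p+2k$, contradicting finite-dimensionality. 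Hence $\h^j_c(Y_0,\mc{F}|_{Y_0}) = 0$ for all $j$, as desired. The principal obstacle is the second step: verifying that the Noetherian induction of Lemma~\ref{lm7} genuinely goes through for $R\Gamma_c$ in place of $R\Gamma$. I expect this to be routine since the same two ingredients (Sumihiro embedding and reduction to a free $H$-action) are the only nontrivial inputs, but each step of the induction needs to be re-examined with $R\Gamma_c$ to ensure nothing obstructs the passage.
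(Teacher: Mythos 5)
Your proposal is correct and is essentially identical to the paper's argument: equivariantize via Lemma~\ref{lm5}, pass through the open--closed distinguished triangle to obtain $\h^i_{\mb{C}^\times,c}(Y_0,\mc{F}) \cong \h^{i+1}_{\mb{C}^\times,c}(Y_1,\mc{F})$, invoke Lemma~\ref{lm7} for finiteness of the $Y_1$ term, and conclude from Lemma~\ref{lm8} together with the infinite-dimensionality of $\h^*_{\mb{C}^\times}(pt)$. The subtlety you flag---that Lemma~\ref{lm7} is stated for $R\Gamma$ rather than $R\Gamma_c$---is genuine, but the paper applies Lemma~\ref{lm7} to $\h^*_{\mb{C}^\times,c}(Y_1,\mc{F})$ without comment, so it relies implicitly on the same adaptation you sketch, and your reasons for believing it goes through (Sumihiro embedding, quotient equivalence $D^b_H(U)\cong D^b_c(U/H)$, finite-dimensionality of compactly supported cohomology on finite-type varieties) are exactly the right ingredients.
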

\begin{proof}
	Let $\mathcal{F}\in D^b_{\mathbb{C}^\times}(Y)$ and $\h^j_c(Y,\mathcal{F})=0$. By Lemma \ref{lm5}, $\h^j_{\mathbb{C}^\times,c}(Y,\mathcal{F})=0$. Let $Y_1=Y-Y_0$, then we have the open and closed embeddings,
	\[Y_0\xhookrightarrow{i} Y \xhookleftarrow{j} Y_1. \]
	This gives us the long exact sequence of $\mathbb{C}^\times$ equivariant cohomology
	\[
	j_!j^*\mathcal{F}\to\mathcal{F}\to i_*i^*\mathcal{F}\to\hspace{2mm}.
	\]
	From that we get,
  $\h^i_{\mathbb{C}^\times,c}(Y_0,\mathcal{F})=\h^{i+1}_{\mathbb{C}^\times,c}(Y_1,\mathcal{F})$, for all $i$. By Lemma \ref{lm7},  $\dim(\h^*_{\mathbb{C}^\times,c}(Y_1,\mathcal{F}))$ is finite. Therefore we can conclude that  $\dim(\h^*_{\mathbb{C}^\times,c}(Y_0,\mathcal{F}))$ is also finite.  By Lemma \ref{lm8},
\begin{equation}\label{99}
\h^*_{\mathbb{C}^\times,c}(Y_0,\mathcal{F})\cong \h^*_{c}(Y_0,\mathcal{F})\otimes  \h^*_{\mathbb{C}^\times}({pt},\Bbbk).
\end{equation}
  Recall, $
  \h^*_{\mathbb{C}^\times}({pt},\Bbbk) \cong Sym(\Bbbk)$, which  is infinite dimensional.
  In equation (\ref{99}), if $\h^*_c(Y_0,\mc{F})\ne 0$ then LHS is finite dimensional and RHS is infinite dimensional,  a contradiction. So $\h^*_c(Y_0,\mc{F})=0$.
\end{proof}

\begin{lemma}\label{lm10} 	
Let $\mathcal{G}\in D^b_{\mathbb{C}^\times}(pt,\Bbbk)$.
\begin{enumerate}
	\item If  $\h^i(For^{\mathbb{C}^\times}(\mathcal{G}))=0$, for all $i$   odd, Then $\Hom({\underline{\Bbbk}}_{pt}, \mathcal{G}[i])=0$, for all $i$  odd.
	\item If  $\Hom^*(\underline{\Bbbk},\mathcal{G})$ is free over $\h^*_{\mathbb{C}^\times}(pt)$ and $0$ for  odd degrees, then $\h^*(For^{\mathbb{C}^\times}(\mathcal{G}))=0$   for odd degrees.	\end{enumerate}
\end{lemma}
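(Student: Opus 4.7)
Both parts follow from the Serre spectral sequence
\[
E_2^{p,q} \;=\; \h^p(B\mathbb{C}^\times,\Bbbk)\otimes_\Bbbk \h^q(\f^{\mathbb{C}^\times}\mathcal{G}) \;\Longrightarrow\; \Hom^{p+q}(\underline{\Bbbk}_{pt},\mathcal{G})
\]
obtained from the Borel construction applied to the trivial $\mathbb{C}^\times$-space $pt$ together with the skeletal filtration of $B\mathbb{C}^\times$; the tensor-product shape of $E_2$ is available here because $B\mathbb{C}^\times$ is simply connected, so the local system on $B\mathbb{C}^\times$ with stalk $\h^q(\f\mathcal{G})$ is constant. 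Since $\h^*(B\mathbb{C}^\times,\Bbbk) \cong \Bbbk[u]$ with $|u|=2$, one has $E_2^{p,q} = 0$ whenever $p$ is odd.

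Part (1) is then immediate: under the hypothesis, $\h^q(\f\mathcal{G})$ vanishes for odd $q$, so $E_2^{p,q} = 0$ unless both $p$ and $q$ are even, and hence the abutment $\Hom^{p+q}(\underline{\Bbbk}_{pt},\mathcal{G})$ vanishes in every odd total degree.

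For part (2), set $M := \Hom^*(\underline{\Bbbk}_{pt},\mathcal{G})$ and $R := \Bbbk[u]$. The plan is to show that the spectral sequence collapses at $E_2$ and then read off the result. Freeness of $M$ over $R$ is precisely the condition of ``equivariant formality'': the differentials $d_r$ are $R$-linear module maps, so any nonzero $d_r$ with $r\ge 2$ would create $R$-torsion in $E_{r+1}$, and hence eventually in $M$, contradicting freeness. Concretely, I would pick a homogeneous $R$-basis $m_1,\dots,m_N$ of $M$ in (necessarily even) degrees $d_1,\dots,d_N$ and compare the Hilbert series of $M$ with that of $R\otimes_\Bbbk \h^*(\f\mathcal{G})$, ruling out higher differentials by a rank count. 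Once collapse is established, the identification $E_\infty = E_2 = R\otimes_\Bbbk \h^*(\f\mathcal{G})$ yields $M/uM \cong \h^*(\f\mathcal{G})$ as graded vector spaces; since $M$ is concentrated in even degrees and $u$ has even degree, so is the quotient, which proves (2).

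The main obstacle will be making the collapse step in (2) rigorous: this is where the full strength of ``free over $R$'' (as opposed to merely ``concentrated in even degrees'') is used essentially, and the analogous statement in part (1) does not need it. If the rank comparison via Hilbert series proves delicate, a cleaner fallback is to lift a graded $\Bbbk$-basis of $\h^*(\f\mathcal{G})$ to $M$ through the edge map $M\to \h^*(\f\mathcal{G})$ and show, by a Nakayama-style argument, that these lifts freely generate $M$; this gives the isomorphism $M/uM \cong \h^*(\f\mathcal{G})$ directly without explicitly invoking degeneration of the spectral sequence.
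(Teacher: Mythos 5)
Your approach via the Leray--Serre spectral sequence of the Borel construction is a genuinely different route from the paper's. For part~(1), the paper invokes Lemma~\ref{lm5} to see $\h^i(\f\mathcal{G})=0$ for $i$ odd, then d\'evisses via truncation to reduce to $\mathcal{G}\cong\bigoplus\underline{\Bbbk}_{pt}[2m]$ and computes $\Hom$ directly; your spectral-sequence argument reaches the same conclusion in one line from the checkerboard vanishing on $E_2$, which is cleaner. For part~(2), the paper picks a homogeneous $R$-basis $\gamma_i\in\h^{-2n_i}_{\mathbb{C}^\times}(\mathcal{G})$, assembles the morphism $\gamma:\bigoplus\underline{\Bbbk}_{pt}[2n_i]\to\mathcal{G}$, observes it is an isomorphism on equivariant cohomology, and shows $\mathrm{Cone}(\gamma)=0$ by a minimal-degree argument (the bottom cohomology sheaf of a nonzero object of $D^b_{\mathbb{C}^\times}(pt)$ produces a nonzero class in its equivariant cohomology). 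Applying the forgetful functor then gives $\f\mathcal{G}\cong\bigoplus\underline{\Bbbk}_{pt}[2n_i]$, concentrated in even degrees. This is an explicit construction of the equivalence witnessing ``formality,'' and it never mentions spectral sequences.

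Your part~(2), by contrast, hinges on the implication ``$M:=\h^*_{\mathbb{C}^\times}(\mathcal{G})$ free over $R=\Bbbk[u]$ $\Rightarrow$ the Serre spectral sequence degenerates at $E_2$,'' and the justification you offer has a real gap. The assertion that a nonzero $R$-linear $d_r$ ``creates $R$-torsion in $E_{r+1}$ and hence eventually in $M$'' is not something you can wave through: $u$-torsion appearing on an intermediate page need not survive as $u$-torsion on $E_\infty$ (subquotients can lose torsion), and even $u$-torsion on $E_\infty$ does not formally force $u$-torsion in the filtered module $M$---an extension of a torsion module by a free one can itself be free. Your Nakayama-style fallback has the same circularity: to lift a $\Bbbk$-basis of $\h^*(\f\mathcal{G})$ through the edge map $M\to\h^*(\f\mathcal{G})$ you need that edge map to be surjective, which is exactly the statement $E_\infty^{0,*}=E_2^{0,*}$ that you are trying to prove. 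The equivalence between freeness and degeneration (``equivariant formality'') is a genuine theorem in this torus setting, but it requires a real argument (typically a Hilbert-series or localization computation), and as written your sketch does not supply it. In effect you are trying to invoke a bigger hammer than the paper does, and have not quite finished forging it; the paper's direct construction of $\gamma$ and vanishing of $\mathrm{Cone}(\gamma)$ is both more elementary and, not coincidentally, the same computation that would be needed to make your collapse step rigorous. If you want to keep the spectral-sequence framing for~(2), I would recommend replacing the ``torsion'' heuristic with precisely the paper's device: lift the $R$-basis of $M$ to a morphism $\gamma$, show it is an isomorphism, and deduce both degeneration and the evenness of $\h^*(\f\mathcal{G})$ at once.
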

\begin{proof}\begin{enumerate}\item 
By Lemma \ref{lm5}, $\h^i(\mathcal{G})=0$ for $i$ odd. If $\h^i(\mathcal{G})$ is nonzero for a unique $i$, then $\mathcal{G}=\underline{\Bbbk}_{pt}[i]$, and clearly the statement is true. If there is more than one nonzero cohomology, then we will use induction on the number of nonzero cohomology sheaves and we will use truncation on $\mathcal{G}$ to reduce to the case, $\mathcal{G}=\bigoplus \underline{\Bbbk}_{pt}[2m]$. Hence, $$\Hom({\underline{\Bbbk}}_{pt}, \mathcal{G}[i])= \bigoplus_m \Hom(\underline{\Bbbk}_{pt}, \underline{\Bbbk}_{pt}[i+2m]) = \bigoplus \h^{i+2m}_{\mathbb{C}^\times}(pt),$$
	which is zero when $i$ is odd.
	 \item Note $\h^*_{\mathbb{C}^\times}(\mathcal{G})$ is free over $\h^*_{\mathbb{C}^\times}(pt)$. So, using the fact that $\h^*_{\mathbb{C}^\times}(\mathcal{G})$ is $0$ in odd degrees, we can choose basis elements,  $\gamma_i\in \h^{-2n_i}_{\mb{C}^\times}(\mc{G})$ for $i=1,\dots, k$. Therefore $\gamma_i$ is a map from $ \underline{\Bbbk}_{pt}[2n_i]$ to $\mc{G}$. Hence we can define \[\gamma:\underline{\Bbbk}_{pt}[2n_1]\oplus \dots \oplus\underline{\Bbbk}_{pt}[2n_k]\to \mc{G}.\]This map induces an isomorphism in equivariant cohomology. Let $\mc{F}=Cone(\gamma)$. Now the claim is that $\mc{F}\cong 0$. If $\mc{F}\ne 0$, then let $k$ be the smallest integer such that $\h^k(\mc{F})\ne 0$. As $\h^k(\mc{F})\in \loc_{,\mb{C}^\times}(pt,\Bbbk)$ which is again equivalent  to finite-dimensional $\Bbbk$-vector spaces, so there is a nonzero map $\underline{\Bbbk}_{pt}\to \h^k(\mc{F})$. Now $\h^k(\mc{F})[-k]\cong \tau^{\le k}\mc{F}$. Hence we have a nonzero map, \[\underline{\Bbbk}_{pt}[-k]\to \tau^{\le k}\mc{F}\to \mc{F}.\] 
	In other words, $\h^k_{\mb{C}^\times}(\mc{F})\cong\Hom(\underline{\Bbbk}_{pt}[-k],\mc{F})\ne 0$, which is a contradiction. Hence $cone(\gamma)=0$ and $\gamma$ is an isomorphism. Therefore, \[\h^j(\f(\mc{G}))\cong \bigoplus_{i=1}^{k}\h^{j+2n_i}(\underline{\Bbbk}_{pt}),\] which is $0$ for  $j$ odd.
\end{enumerate}
\end{proof}
\begin{theorem}\label{th7}
Let $X$ be a $\mathbb{C}$-variety with a ${\mathbb{C}^\times}$-action on it.
Let	 $X^{\mathbb{C}^\times}$ be the fixed point set and $\mathcal{F}\in D^b_{\mb{C}^\times,c}(X,\Bbbk)$, a local system. If  $\h^a_c(X,\mathcal{F})=0$ for $a$ odd, then $\h^a_c(X^{\mathbb{C}^\times},\mathcal{F})=0$ for $a$ odd, provided characteristic $l$ of $\Bbbk$ does not divide the order of the stabilizers on $X-X^{\mb{C}^\times}$.
\end{theorem}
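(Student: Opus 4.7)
The strategy is to bootstrap the non-equivariant odd vanishing on $X$ up to $\mathbb{C}^\times$-equivariant odd vanishing, push that information across the open/closed decomposition $X = (X - X^{\mathbb{C}^\times}) \sqcup X^{\mathbb{C}^\times}$ via a long exact sequence, and then descend back to non-equivariant odd vanishing on $X^{\mathbb{C}^\times}$ by exploiting the infinite-dimensionality of $\h^*_{\mathbb{C}^\times}(pt,\Bbbk)$. This parallels Lemma \ref{lm9} (the ``all degrees'' analogue) but requires more care in the degree bookkeeping.

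First, I would apply Lemma \ref{lm10}(1) to $\mathcal{G} = (a_X)_!\mathcal{F} \in D^b_{\mathbb{C}^\times}(pt,\Bbbk)$, where $a_X: X \to pt$. The hypothesis gives $\h^i(\f\,\mathcal{G}) = \h^i_c(X,\mathcal{F}) = 0$ for $i$ odd, so Lemma \ref{lm10}(1) upgrades this to $\h^i_{\mathbb{C}^\times, c}(X,\mathcal{F}) \cong \Hom(\underline{\Bbbk}_{pt}, \mathcal{G}[i]) = 0$ for $i$ odd. Next, with $U := X - X^{\mathbb{C}^\times}$ and embeddings $X^{\mathbb{C}^\times} \xhookrightarrow{i} X \xhookleftarrow{j} U$, the distinguished triangle $j_!j^*\mathcal{F} \to \mathcal{F} \to i_*i^*\mathcal{F} \to$ yields the long exact sequence
\[\cdots \to \h^a_{\mathbb{C}^\times, c}(U,\mathcal{F}) \to \h^a_{\mathbb{C}^\times, c}(X,\mathcal{F}) \to \h^a_{\mathbb{C}^\times, c}(X^{\mathbb{C}^\times},\mathcal{F}) \to \h^{a+1}_{\mathbb{C}^\times, c}(U,\mathcal{F}) \to \cdots.\]
For $a$ odd, the middle term is zero, so $\h^a_{\mathbb{C}^\times, c}(X^{\mathbb{C}^\times},\mathcal{F})$ injects into $\h^{a+1}_{\mathbb{C}^\times, c}(U,\mathcal{F})$.

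By the stabilizer hypothesis, Lemma \ref{lm7} applies on $U$ (where all stabilizers are finite and of order prime to $l$), giving $\dim \h^*_{\mathbb{C}^\times, c}(U,\mathcal{F}) < \infty$. Consequently, $\bigoplus_{a \text{ odd}} \h^a_{\mathbb{C}^\times, c}(X^{\mathbb{C}^\times},\mathcal{F})$ is finite-dimensional. Since $\mathbb{C}^\times$ acts trivially on $X^{\mathbb{C}^\times}$, Lemma \ref{lm8} provides the K\"unneth-type isomorphism
\[\h^*_{\mathbb{C}^\times, c}(X^{\mathbb{C}^\times},\mathcal{F}) \cong \h^*_c(X^{\mathbb{C}^\times},\mathcal{F}) \otimes \h^*_{\mathbb{C}^\times}(pt,\Bbbk),\]
where the right factor is $\mathrm{Sym}(\Bbbk)$, infinite-dimensional and concentrated in even degrees. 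If some odd $b$ had $\h^b_c(X^{\mathbb{C}^\times},\mathcal{F}) \neq 0$, then the tensor product would produce a nonzero contribution in every odd degree $b+2k$, $k \geq 0$, forcing $\bigoplus_{a \text{ odd}} \h^a_{\mathbb{C}^\times, c}(X^{\mathbb{C}^\times},\mathcal{F})$ to be infinite-dimensional, a contradiction. Hence $\h^b_c(X^{\mathbb{C}^\times},\mathcal{F}) = 0$ for all odd $b$.

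The main subtlety is the final degree-bookkeeping: one must ensure that no cancellation occurs between the graded pieces in the tensor product, which holds because each even-degree piece of $\h^*_{\mathbb{C}^\times}(pt,\Bbbk)$ is a free $\Bbbk$-module of rank one, so the graded tensor product preserves nonvanishing degree-wise. A minor bookkeeping point is the use of Lemma \ref{lm7} with compactly supported cohomology rather than ordinary cohomology, but its proof adapts verbatim (as is already implicit in the proof of Lemma \ref{lm9}).
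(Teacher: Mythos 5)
Your proposal is correct and follows essentially the same route as the paper: apply Lemma \ref{lm10}(1) to lift odd vanishing to equivariant odd vanishing, use the open/closed long exact sequence to inject the odd equivariant cohomology of $X^{\mb{C}^\times}$ into that of $U$, invoke Lemma \ref{lm7} for finite-dimensionality on $U$, invoke Lemma \ref{lm8} (the K\"unneth isomorphism over $\h^*_{\mb{C}^\times}(pt)$) on the fixed locus, and conclude by the zero-or-infinite dichotomy. The only (cosmetic) divergence is at the very end: the paper first deduces that $\h^{\mathrm{odd}}_{\mb{C}^\times,c}(X^{\mb{C}^\times},\mc{F})$ vanishes and then applies Lemma \ref{lm10}(2) to descend to non-equivariant cohomology, whereas you read the non-equivariant vanishing directly off the graded tensor-product decomposition provided by Lemma \ref{lm8}; both routes are valid and rest on the same freeness observation.
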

\begin{proof}Let $Z= X^{\mathbb{C}^\times}$ and $U=X-Z$. Let $i$, $j$ be the inclusion maps of $Z$ and $U$ respectively, and,\[
 Z\xhookrightarrow{i}X\xhookleftarrow{j}U.\]Also let $a:X\to \{pt\}$. Let $ \mathcal{G} =a_!(\mathcal{F})$. We have the distinguished triangle below
 
  \[j_!\mathcal{F}|_U\to\mathcal{F}\to i_!i^*\mathcal{F}\to\hspace{5mm}.\]
  
  We can apply $a_!$ to this, and  get,
  \[\mathcal{G}\to a_! i^*\mathcal{F} \to a_!(\mathcal{F}|_U)[1]\to \hspace{5mm}.\]
  Now, \[\h^i(For(\mathcal{G}))=\h^i_c(X,\mathcal{F})=0\text{ for $i$ odd.}\] Hence by Lemma \ref{lm10}, $H^i_{\mathbb{C}^\times}(\mathcal{G})=0$ for $i$ odd. This implies that the map, \[H^i_{\mathbb{C}^\times}(a_! i^*\mathcal{F})\to H^{i+1}_{\mathbb{C}^\times}(\mathcal{F}|_U)\] is injective for $i$ odd. 
 As the characteristic of $k$  does not divide the order of the stabilizers on $X-X^{\mb{C}^\times}$,    by lemma \ref{lm7}, $\dim( \h^*_{\mathbb{C}^\times}(\mathcal{F}|_U))< \infty$. 
  The claim is that $\h^*_{\mb{C}^\times}(a_!i^*\mathcal{F})$ is free over $\h^*_{\mathbb{C}^\times}(pt)$. $i^*\mathcal{F}\in D^b_{\mathbb{C}^\times}(Z)$.
  If $\mathcal{F}$ is a local system then $i^*\mathcal{F}$ is also a local system. 
  Let $\mathcal{E}=i^*\mc{F}$ is a local system on $Z$, where $\mathbb{C}^\times$ acts on $Z$ trivially. 

Hence by Lemma \ref{lm8},
\begin{align*}
\h^*_{\mathbb{C}^\times}(a_!\mathcal{E})\cong 
\h^*_{\mathbb{C}^\times}(pt)\otimes a_!\mathcal{E}
\end{align*}        
and this is free over $\h^*_{\mathbb{C}^\times}(pt)$. So in our context $\h^{odd}_{\mathbb{C}^\times}(a_!\mathcal{E})$ is either is 0 or infinite-dimensional. If infinite-dimensional, then it is a contradiction because it has an injective map to a finite dimensional cohomology. Hence it must be  $0$ for odd degrees, that is $\h^*_{\mathbb{C}^\times}(Z,\mathcal{F})=0$ for odd degrees. Now by Lemma \ref{lm10}(2), $\h^*_c(Z,\mathcal{F})=0$ for odd degrees.
 \end{proof}

\begin{theorem}\label{euler}
	Let $M$ be an object in $D^b_{\mb{C}^\times}(pt)$.  Assume that $\h^*_{\mb{C}^\times}(M)$ is finite-dimensional, then the Euler characteristic of $\h^*(M)$ (nonequivariant cohomology) is $0$.
\end{theorem}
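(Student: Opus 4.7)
The plan is to exploit the graded $\Bbbk[u]$-module structure on $N:=\h^*_{\mb{C}^\times}(M)$, where $u\in \h^2_{\mb{C}^\times}(pt)$ is the canonical generator (of degree $2$), together with a Gysin-type long exact sequence linking equivariant to nonequivariant cohomology on a point. The key numerical input will be that $\deg u = 2$ is even.

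First, I would assemble the Gysin long exact sequence by completing the Euler class $u\colon \underline{\Bbbk}[-2]\to \underline{\Bbbk}$ in $D^b_{\mb{C}^\times}(pt)$ to a distinguished triangle
\[
M[-2]\xrightarrow{\;u\;} M \to C \to M[-1],
\]
and identifying the third term with an object whose $\mb{C}^\times$-equivariant cohomology computes the nonequivariant cohomology $\h^*(\f(M))$. This identification amounts to the fact that the forgetful functor corresponds, under the natural equivalence of $D^b_{\mb{C}^\times}(pt)$ with a category of $\Bbbk[u]$-dg-modules (sending $M\mapsto \h^*_{\mb{C}^\times}(M)$), to derived tensor product with $\Bbbk=\Bbbk[u]/(u)$; equivalently, it comes from the Koszul resolution $0\to \Bbbk[u][-2]\xrightarrow{u}\Bbbk[u]\to \Bbbk\to 0$. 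Applying $R\Gamma_{\mb{C}^\times}$ to the triangle yields
\[
\cdots\to \h^{i-2}_{\mb{C}^\times}(M)\xrightarrow{\;u\;}\h^{i}_{\mb{C}^\times}(M)\to \h^{i}(M)\to \h^{i-1}_{\mb{C}^\times}(M)\xrightarrow{\;u\;}\h^{i+1}_{\mb{C}^\times}(M)\to\cdots,
\]
and reading off dimensions gives the identity
\[
\dim \h^i(M) = \dim(N/uN)^i + \dim(\ker u)^{i-1}.
\]
In particular $\h^i(M)$ is finite-dimensional for every $i$, and vanishes for $|i|$ large since $M$ is bounded, so $\chi(\h^*(M))$ is well-defined. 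Multiplying the dimension identity by $(-1)^i$ and summing yields
\[
\chi(\h^*(M)) = \chi(N/uN) - \chi(\ker u).
\]

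To conclude, I would use the exact sequence of graded vector spaces
\[
0\to \ker u \to N \xrightarrow{\;u\;} N \to N/uN \to 0
\]
in which the middle map raises internal degree by the even integer $2$. Since a shift by an even integer preserves Euler characteristics (because $(-1)^2 = 1$), a standard bookkeeping in graded dimensions gives $\chi(\ker u)=\chi(N/uN)$, and substitution yields $\chi(\h^*(M))=0$. The main subtlety, and the only nontrivial step, is the identification of the cone $C$ with an object computing $\h^*(\f(M))$; with that in hand, the rest of the argument is purely parity-of-$u$ bookkeeping.
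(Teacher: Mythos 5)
Your proof is correct and lands on the same long exact sequence as the paper's, so the two arguments are essentially the same. Both produce a three-term periodic Gysin sequence
\[
\cdots \to \h^{i-2}_{\mb{C}^\times}(M) \to \h^{i}_{\mb{C}^\times}(M) \to \h^{i}(\f M) \to \h^{i-1}_{\mb{C}^\times}(M) \to \h^{i+1}_{\mb{C}^\times}(M) \to \cdots
\]
and then observe that, because the two $\h^*_{\mb{C}^\times}$-terms occur in positions differing by an odd index (equivalently, because $\deg u = 2$ is even), their contributions to the alternating sum cancel and $\chi(\h^*(\f M))=0$. The difference is in how the sequence is obtained. You produce it by forming the cone of $u\colon M[-2]\to M$ and then identifying $\h^*_{\mb{C}^\times}$ of the cone with $\h^*(\f M)$ via the Koszul resolution of $\Bbbk$ over $\Bbbk[u]$ under the Bernstein--Lunts/GKM-style dg-module description of $D^b_{\mb{C}^\times}(pt)$; you correctly flag this identification as the one nontrivial step. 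The paper instead works on the other side of the adjunction: it invokes the explicit Bernstein--Lunts formula for the left adjoint $\inn_!$ of the forgetful functor, computes $\inn_!\underline{\Bbbk}_{pt}=R\Gamma_c(\underline{\Bbbk}_{\mb{C}^\times}[2])$, reads off that this object has cohomology $\Bbbk$ in degrees $-1,0$ only, and uses the truncation triangle $\Bbbk[1]\to \inn_!\underline{\Bbbk}_{pt}\to \Bbbk\to$; applying $\Hom(-,M)$ then yields the same Gysin sequence, with $\h^i(\f M)=\Hom(\inn_!\underline{\Bbbk}_{pt}, M[i])$ coming directly from adjunction. The paper's route is slightly more self-contained in that it only needs the $\inn_!$ formula from \cite{BL} rather than a dg-module formality statement, but your version has the advantage of making the role of the Euler class $u$ and the parity bookkeeping completely explicit; either presentation would serve.
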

\begin{proof}
From \cite[Th. ~3.7.1]{BL},
\[\f^{\mb{C}^\times} : D^b_{\mb{C}^\times}(pt) \to D^b(pt)
\] has a left adjoint $\inn_!$. Let $a:\mb{C}^\times \times pt \to pt$ be the projection on $pt$, which is the constant map in this case and $\nu: pt \to \mb{C}^\times \times pt$, the inclusion map. Here $\nu^*\f^{\mb{C}^\times}[-2]$ is the induction equivalence map. Then the formula for $\inn_!$ is $a_!\mb{D}(\nu^*\f^{\mb{C}^\times}[-2])^{-1}: D^b(pt)\to D^b_{\mb{C}^\times}(pt)$, where $\mb{D}$ is the equivariant Verdier duality. Therefore $\inn_!(\underline{\Bbbk}_{pt})=R\Gamma_c(\underline{\Bbbk}_{\mb{C}^\times}[2])$. So,
\begin{align}\label{w}
 \h^i(\inn_!\underline{\Bbbk}_{pt})
 \begin{cases}
 \cong \Bbbk \text{ for } i=0,-1\\ \cong 0 \text{ otherwise.}
 \end{cases}
 \end{align}
We have the distinguished triangle,
\[
\tau^{\le{-1}}\inn_!(\underline{\Bbbk}_{pt})\to \inn_!(\underline{\Bbbk}_{pt}) \to \tau^{\ge 0}\inn_!(\underline{\Bbbk}_{pt})\to \hspace{2mm}.
\]Using (\ref{w}) this distinguished triangle reduces to
\[
\Bbbk[1]\to \inn_!(\underline{\Bbbk}_{pt}) \to \Bbbk \to \hspace{2mm}.
\]
Note that, $\Hom(\inn_!\underline{\Bbbk}_{pt}, M) \cong \Hom(\underline{\Bbbk}_{pt}, \f^{\mb{C}^\times}(M))\cong \h^i(M)$ and $\Hom(\underline{\Bbbk}_{pt}, M)\cong \h^i_{\mb{C}^\times}(M)$.
Now we apply $\Hom(-,M)$ to the above distinguished triangle and get the long exact sequence,
\[\to \h^{i-1}_{\mb{C}^\times}(M) \to \h^i(M) \to \h^i_{\mb{C}^\times}(M)\to \dots \hspace{2mm}.
\]From the assumption on $M$, this LES have finitely many terms. Therefore,
\[\chi(\h^{i-1} _{\mb{C}^\times}(M))+\chi(\h^i _{\mb{C}^\times}(M))=\chi(\h^i(M)).
\]Here $\chi$ denotes the Euler characteristics. But  
$ \chi(\h^{i-1} _{\mb{C}^\times}(M)) =- \chi(\h^{i} _{\mb{C}^\times}(M)) $, so the left hand side is $0$. So $\chi(\h^i(M))=0$ and we are done.

\end{proof}

\section{Induction and restriction}\label{sec4}
Let $P $ be a parabolic subgroup of $G$ containing $\chi(\mathbb{C}^\times)$. Let $L$ and $U$ be a  Levi subgroup and the unipotent radical, respectively. 
 We can choose $L$ so that $\chi $ gets mapped in to $L$. Let $\mathfrak{p}, \mathfrak{l}, \mathfrak{n}$ be the Lie algebras of $P,L,U$ respectively. Then $\mathfrak{p}, \mathfrak{l}, \mathfrak{n}$ inherit grading from $\mathfrak{g}$:
\[\mathfrak{p}=\bigoplus_{n\in \mathbb{Z}}{\mathfrak{p}}_n, \mathfrak{l}=\bigoplus_{n\in \mathbb{Z}}\mathfrak{l}_n, \mathfrak{n}=\bigoplus_{n\in \mathbb{Z}}\mathfrak{n}_n,
\] where $\mathfrak{p}_n=\mathfrak{p}\cap \mathfrak{g}_n, \mathfrak{n}_n=\mathfrak{n}\cap \mathfrak{g}_n$ and $\mathfrak{l}_n=\mathfrak{p}_n/\mathfrak{n}_n$. From now on the composition of $\chi:\mathbb{C}^\times \to P$ and $P \twoheadrightarrow P/U=L$ will also be denoted by $\chi:\mb{C}^\times \to L$.

\subsection{Induction and restriction} 
Let's recall the induction diagram from \ref{cuspidal}
  \[\mathscr{N}_L\xleftarrow{\pi_P}\mathscr{N}_L+\mathfrak{u}_P\xrightarrow{e_P}G\times^{P}(\mathscr{N}_L+\mathfrak{u}_P)\xrightarrow{\mu_P}\mathscr{N}_G,
  \] where $\mathfrak{u}_P=\lie(U_P)$, $\pi_P, e_P$ are the obvious maps and $\mu_P(g,x)=Ad(g)x$. A slight modification of this diagram  gives us the induction diagram in the graded setting.

\[
\begin{tikzcd}
\mathfrak{l}_n & \mathfrak{p}_n \arrow[l, "\pi"'] \arrow[r, "e"] \arrow[rr, "i"', bend right] & G_0\times^{P_0}\mathfrak{p}_n \arrow[r, "\mu"] & \mathfrak{g}_n
\end{tikzcd}
\]
As before,  $\pi$ is  projection, $e,i$ are inclusions and $\mu(g,x)=Ad(g)x$. The induction functor is denoted by
\[\ind:D^b_{L_0}(\mathfrak{l}_n)\to D^b_{G_0}(\mathfrak{g}_n).\] As $P_0=L_0\ltimes U_0$ and $U_0$ acts on $\mathfrak{l}_n$ trivially,  we have equivalence of categories $D^b_{P_0}(\mathfrak{l}_n)\cong D^b_{L_0}(\mathfrak{l}_n)$. So instead of starting from $D^b_{L_0}(\mathfrak{l}_n)$ we can start from $D^b_{P_0}(\mathfrak{l}_n)$. So we define \[\ind(\mathcal{F}):={\mu}_!(\underbrace{e^*\f^{G_0}_{P_0}}_\text{Induction  Equivalence})^{-1}{\pi}^*(\mathcal{F}).\] 
\\
Here $e^*\f^{G_0}_{P_0}: D^b_{G_0}(G_0\times^{P_0}\mf{p}_n)\to D^b_{P_0}(\mf{p}_n)$ is the induction equivalence map, hence its inverse makes sense.
The definition of restriction also comes from the diagram above,
$\res:D^b_{G_0}(\mathfrak{g}_n)\to D^b_{L_0}(\mathfrak{l}_n)$ is defined by, $$\res(\mathcal{F}):={\pi}_!i^*\f^{G_0}_{L_0}(\mathcal{F}).$$
\begin{theorem}\label{ver}
The functor $\ind$ commutes with $\mb{D}$, the Verdier duality functor.
\end{theorem}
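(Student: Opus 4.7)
The plan is to verify that each of the three functors in the composition $\ind = \mu_! \circ (e^*\f^{G_0}_{P_0})^{-1} \circ \pi^*$ is compatible with $\mathbb{D}$, and then to combine the three compatibilities.

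The key input is that $\mu: G_0\times^{P_0}\mathfrak{p}_n \to \mathfrak{g}_n$ is proper. To see this, I would factor $\mu$ as the closed immersion
\[
G_0\times^{P_0}\mathfrak{p}_n \hookrightarrow G_0/P_0 \times \mathfrak{g}_n, \qquad [(g,x)] \mapsto (gP_0,\mathrm{Ad}(g)x),
\]
followed by the second projection; the latter is proper because $G_0/P_0$ is a projective variety. Properness of $\mu$ gives $\mu_! \cong \mu_*$, and combined with the general identity $\mathbb{D}\circ f_! \cong f_* \circ \mathbb{D}$ this yields $\mathbb{D}\mu_! \cong \mu_!\mathbb{D}$.

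The other two ingredients are standard. The induction equivalence $(e^*\f^{G_0}_{P_0})^{-1}$, as an equivalence of equivariant derived categories obtained from the Bernstein--Lunts construction, commutes with Verdier duality, under the standard normalization by a shift accounting for the relative dimension of the fiber bundle $G_0\times^{P_0}\mathfrak{p}_n\to G_0/P_0$. On the other hand, $\pi:\mathfrak{p}_n\to\mathfrak{l}_n$ is a vector bundle with fiber $\mathfrak{n}_n$, hence smooth of relative dimension $d:=\dim\mathfrak{n}_n$, and so $\pi^! \cong \pi^*[2d]$ combined with the identity $\mathbb{D}\pi^* \cong \pi^!\mathbb{D}$ completely controls the interaction of $\pi^*$ with $\mathbb{D}$. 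Composing the three pieces gives $\mathbb{D}\,\ind(\mathcal{F}) \cong \ind(\mathbb{D}\mathcal{F})$ once the shifts are combined.

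The main obstacle is precisely the bookkeeping of these cohomological shifts: the shift $[2d]$ coming from $\pi^!\cong\pi^*[2d]$ has to be absorbed by the matching shift that appears when $\mathbb{D}$ is pushed through the induction equivalence. Under the paper's conventions---which match the standard normalizations in equivariant sheaf theory, so that the induction equivalence is $t$-exact and compatible with $\mathbb{D}$---this cancellation is automatic, and the rest of the proof is just the direct chain of identities above: $\mathbb{D}\,\mu_!(e^*\f^{G_0}_{P_0})^{-1}\pi^*\mathcal{F}\cong \mu_!\mathbb{D}(e^*\f^{G_0}_{P_0})^{-1}\pi^*\mathcal{F}\cong \mu_!(e^*\f^{G_0}_{P_0})^{-1}\mathbb{D}\pi^*\mathcal{F}\cong\mu_!(e^*\f^{G_0}_{P_0})^{-1}\pi^*\mathbb{D}\mathcal{F}=\ind(\mathbb{D}\mathcal{F})$.
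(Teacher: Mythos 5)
Your proposal follows the paper's proof essentially step for step: both decompose $\ind = \mu_!\circ(e^*\f^{G_0}_{P_0})^{-1}\circ\pi^*$, use properness of $\mu$ so that $\mu_!=\mu_*$ commutes with $\mathbb{D}$, use smoothness of $\pi$ so that $\pi^!\cong\pi^*[2d]$, and invoke the Bernstein--Lunts compatibility of the induction equivalence with duality. Your justification of why $\mu$ is proper (closed immersion into $G_0/P_0\times\mathfrak{g}_n$ followed by the projection to $\mathfrak{g}_n$) is more detailed than the paper's bare assertion.

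The place where your write-up is weaker than the paper's is precisely the step you dismiss as ``automatic''. Saying that the induction equivalence is ``compatible with $\mathbb{D}$'' is not accurate as stated: it commutes with $\mathbb{D}$ only up to a shift (namely $[\pm 2\dim G_0/P_0]$, by \cite[Prop.~7.6.2]{BL}), and the shift $[2d]$ with $d=\dim\mathfrak{n}_n$ coming from $\mathbb{D}\pi^*\cong\pi^*\mathbb{D}[2d]$ has no a priori reason to cancel it. That cancellation \emph{is} the content of the theorem, so it cannot be asserted -- it must be computed. The paper does exactly this: it records the two shifts explicitly, together with the numerical identity $\dim\mathfrak{p}_n-\dim\mathfrak{l}_n=2\dim G_0/P_0$ that is claimed to make them cancel. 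You should carry out this bookkeeping carefully, including the sign change that occurs when you pass from the shift formula for $e^*\f^{G_0}_{P_0}$ to the one for its inverse $(e^*\f^{G_0}_{P_0})^{-1}$ (the functor that actually appears in $\ind$) -- this is easy to get wrong, and getting it wrong would quietly change a cancellation into an accumulation.
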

\begin{proof}
	The map $\mu$   is  proper, therefore it  commutes with $\mb{D}$. By \cite[prop. ~7.6.2]{BL}, $\mb{D}(e^*\f^{G_0}_{P_0})=(e^*\f^{G_0}_{P_0})\mb{D}[-2\dim G_0/{P_0}]$. The map $\pi$ is smooth and has relative dimension of $\dim \mf{p}_n-\dim \mf{l}_n=2\dim G_0/{P_0} $. Therefore $\mb{D}\pi^*=\pi^!\mb{D}=\pi^*\mb{D}[2\dim G_0/{P_0}]$. Combining all these facts we can see, $\mb{D}\ind=\ind \mb{D}$.
\end{proof}

 \subsection{Transitivity}
 Before going into the main result of this section we will talk about the transitivity of induction. Let $P$ be a parabolic subgroup of $G$ containing the Levi subgroup $L$ which contains $\chi(\mb{C}^\times)$. Let $R$ be a parabolic contained in $P$  with Levi $M\subset L$, which again contains $\chi(\mb{C}^\times)$. Then $R\cap L$ is  a parabolic subgroup of $L$ with the Levi factor $M$. Let $\mf{r,m}$ be the Lie algebras of $R,M$ respectively.
 
 \begin{theorem}
 Let $R\subset P$ and $M\subset L$ as defined above. Then for $\mc{F}\in D^b_{M_0}(\mf{m}_n)$, $\inn^{\mf{g}}_{\mf{p}}\inn^\mf{l}_{\mf{l}\cap\mf{r}}(\mc{F})\cong \inn^\mf{g}_\mf{r}(\mc{F})$.	
 \end{theorem}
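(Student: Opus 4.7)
The plan is to establish transitivity by building a commutative diagram that factors the defining induction diagram for $\inn^\mf{g}_\mf{r}$ through an intermediate space that ties together the diagrams for $\inn^\mf{g}_\mf{p}$ and $\inn^\mf{l}_{\mf{l}\cap\mf{r}}$, and then to apply proper base change together with the transitivity of the induction equivalence. This is the graded analogue of the classical transitivity of parabolic induction on the nilpotent cone (cf.\ \cite{Lu}), and the argument is parallel.

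First I would record the structural decompositions. Since $R\subset P$, the projection $P\twoheadrightarrow L$ sends $R$ onto the parabolic $L\cap R$ of $L$ with Levi factor $M$, and $R_0=(L\cap R)_0\ltimes (U_P\cap R)_0$; likewise $\mf{r}_n=(\mf{l}\cap\mf{r})_n\oplus (\mf{n}_P\cap\mf{r})_n$ as graded pieces. These let $\inn^\mf{l}_{\mf{l}\cap\mf{r}}$ be assembled from an induction diagram of the same shape as the other two. I would then introduce the intermediate space $Z:=P_0\times^{R_0}\mf{r}_n$ together with the proper map $\gamma:Z\to\mf{p}_n$, $[p,x]\mapsto \mathrm{Ad}(p)x$, and the map $\alpha:Z\to L_0\times^{(L\cap R)_0}(\mf{l}\cap\mf{r})_n$ induced by $P_0\to L_0$ and $\mf{r}_n\to (\mf{l}\cap\mf{r})_n$. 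Two compatibilities then need to be checked: (i) the square formed by $\gamma$, $\alpha$ and the maps $\pi:\mf{p}_n\to\mf{l}_n$, $\mu':L_0\times^{(L\cap R)_0}(\mf{l}\cap\mf{r})_n\to \mf{l}_n$ is Cartesian; and (ii) the identification $G_0\times^{R_0}\mf{r}_n\cong G_0\times^{P_0}Z$ intertwines the collapse maps, so that $\mu''=\mu\circ(G_0\times^{P_0}\gamma)$.

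Once this is in place, I would apply proper base change on the Cartesian square to rewrite $\pi^*\mu'_!\cong \gamma_!\alpha^*$, and use the transitivity of the induction equivalence for the nested inclusions $R_0\subset P_0\subset G_0$ to combine $(e^*\f^{G_0}_{P_0})^{-1}$ with the $P_0$-equivariant version of $(e'^*\f^{L_0}_{(L\cap R)_0})^{-1}$ into $(e''^*\f^{G_0}_{R_0})^{-1}$. After substitution, the composition $\mu_!(e^*\f^{G_0}_{P_0})^{-1}\pi^*\mu'_!(e'^*\f^{L_0}_{(L\cap R)_0})^{-1}\pi'^*$ collapses to $\mu''_!(e''^*\f^{G_0}_{R_0})^{-1}\pi''^*$, which is $\inn^\mf{g}_\mf{r}(\mc{F})$ by definition. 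The main obstacle is the bookkeeping around the induction equivalences rather than any deep geometric fact: one has to verify carefully that the chain of category equivalences $D^b_{G_0}(G_0\times^{R_0}\mf{r}_n)\simeq D^b_{P_0}(Z)\simeq D^b_{R_0}(\mf{r}_n)$ is compatible with the base-change isomorphism from the first step, so that the extra forgetful functors $\f$ cancel correctly along the way. Properness of $\mu,\mu',\mu''$ and smoothness of the $\pi$'s are immediate from their definitions, so no further issue arises there.
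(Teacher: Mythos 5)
Your argument is correct and follows essentially the same route as the paper, which simply displays the commutative diagram relating $\mf{r}_n$, $\mf{p}_n$, $\mf{l}_n\cap\mf{r}_n$, $\mf{l}_n$, $\mf{m}_n$, $\mf{g}_n$ and declares the result ``clear'' from it; you are filling in exactly the implicit steps — the Cartesian square giving proper base change, and the transitivity of the induction equivalences along $R_0\subset P_0\subset G_0$. Introducing the auxiliary space $Z=P_0\times^{R_0}\mf{r}_n$ is a clean way to organize that bookkeeping, and your Cartesian-square claim does hold because $\mf{r}_n=(\mf{l}\cap\mf{r})_n\oplus(\mf{n}_P)_n$.
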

 \begin{proof}
 The proof is clear from the diagram below.
\[
\begin{tikzcd}
\mathfrak{r}_n \arrow[d] \arrow[r] \arrow[dd, "\pi_R"', bend right=49] \arrow[rr, "i_R", bend left] & \mathfrak{p}_n \arrow[r] \arrow[d, "\pi_P"] & \mathfrak{g}_n \\
\mathfrak{l}_n\cap \mathfrak{r}_n \arrow[d, "\pi_{L\cap R}"] \arrow[r, "i_{L\cap R}"]               & \mathfrak{l}_n                              &                \\
\mathfrak{m}_n                                                                                      &                                             &               
\end{tikzcd}
\]
 \end{proof}

\subsection{Lusztig's original definition}\label{2.2}Lusztig's original definition of the restriction is same as we defined above. But for induction, he  used a different diagram.
\[
\mathfrak{l}_n\xleftarrow{p_1}E'\xrightarrow{p_2}E''\xrightarrow{p_3}\mathfrak{g}_n
\] 

where $E'=G_0\times^{U_0}\mathfrak{p}_n$ and $E''=G_0\times^{P_0}\mathfrak{p}_n$. 
Here $p_1(g,x)=\pi(x)$, $p_2$ is the obvious map and $p_3(g,x)=Ad(g)x$. Induction is defined by $\ind(\mathcal{F})={p_3}_!(\underbrace{{p_2^*\f^{G_0}_{P_0}}}_\text{Induction Equivalence})^{-1}p_1^*(\mathcal{F})$.

\begin{lemma}\label{match}
Lusztig's original definition of induction matches with the definition given here.	
\end{lemma}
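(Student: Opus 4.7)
The plan is to relate the two induction diagrams by introducing the auxiliary map $q : \mf{p}_n \to E'$ defined by $q(x) = [1, x]$, which is the inclusion of the fiber of the vector bundle $E' = G_0 \times^{U_0} \mf{p}_n \to G_0/U_0$ over the identity coset. A direct check from the explicit formulas shows that $\pi = p_1 \circ q$, $e = p_2 \circ q$, and $\mu = p_3$ (both send $(g,x) \mapsto \mathrm{Ad}(g)x$), so both induction diagrams assemble into the commutative diagram
\[
\begin{tikzcd}
\mf{l}_n \arrow[d, equal] & \mf{p}_n \arrow[l, "\pi"'] \arrow[d, "q"] \arrow[r, "e"] & E'' \arrow[d, equal] \arrow[r, "\mu"] & \mf{g}_n \arrow[d, equal] \\
\mf{l}_n & E' \arrow[l, "p_1"] \arrow[r, "p_2"'] & E'' \arrow[r, "p_3"'] & \mf{g}_n.
\end{tikzcd}
\]
Since $\mu = p_3$, the problem reduces to showing that the intermediate objects on $E''$ agree, namely $(e^* \f^{G_0}_{P_0})^{-1} \pi^* \mc{F} \cong (p_2^* \f^{G_0}_{P_0})^{-1} p_1^* \mc{F}$.

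I would check this identity by applying the induction equivalence $e^* \f^{G_0}_{P_0}$ to both sides. On the left one recovers $\pi^* \mc{F}$ directly. On the right, the factorization $e = p_2 \circ q$ together with the fact that $q^*$ commutes with the forgetful functor gives
\[
e^* \f^{G_0}_{P_0} \circ (p_2^* \f^{G_0}_{P_0})^{-1} \cong q^* \circ (p_2^* \f^{G_0}_{P_0}) \circ (p_2^* \f^{G_0}_{P_0})^{-1} \cong q^*,
\]
so the right-hand side becomes $q^* p_1^* \mc{F} = (p_1 \circ q)^* \mc{F} = \pi^* \mc{F}$. Composing with $\mu_! = p_{3!}$ then yields the equality of the two versions of $\ind$.

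The main technical obstacle is justifying the factorization $e^* \f^{G_0}_{P_0} \cong q^* \circ (p_2^* \f^{G_0}_{P_0})$ cleanly at the level of the relevant equivariant derived categories, which requires tracking the equivariance structure on $q^*$. This amounts to the transitivity of induction equivalences along the short exact sequence $1 \to U_0 \to P_0 \to L_0 \to 1$: the one-step equivalence $D^b_{G_0}(E'') \xrightarrow{\sim} D^b_{P_0}(\mf{p}_n)$ can be realized in two stages, first via the $L_0$-torsor $p_2 : E' \to E''$ and then via the $U_0$-equivariant fiber inclusion $q : \mf{p}_n \to E'$. This is a standard descent fact (verifiable, for instance, via Bernstein--Lunts simplicial resolutions), and once it is in place the rest of the argument is formal.
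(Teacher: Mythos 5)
Your proposal is correct and follows essentially the same route as the paper. The paper also reduces everything to the factorizations $\pi = p_1\circ q$, $e = p_2\circ q$, and $\mu = p_3$; it merely writes the auxiliary map $q$ explicitly as the composite $q_U\circ h$ through the intermediate space $G_0\times\mathfrak{p}_n$, whereas you package it as a single inclusion of the fiber over the identity coset, and both proofs then cancel the induction equivalence to reduce to $\pi^*$. The only delicate point, as you already note, is tracking the equivariance structures so that $e^*\f^{G_0}_{P_0}\cong q^*\circ(p_2^*\f^{G_0}_{P_0})$ makes sense as a composite of induction equivalences; the paper handles this in the same implicit, formal way (passing through $G_0\times\mathfrak{p}_n$ and relying on Bernstein--Lunts), so you are not missing anything the paper supplies.
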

\begin{proof}

 It follows from the diagram below.
\[
\begin{tikzcd}
\mathfrak{l}_n & \mathfrak{p}_n\arrow[l, "\pi"'] \arrow[r, "h"] & G_0\times{\mathfrak{p}_n} \arrow[r, "q_P"] \arrow[d, "q_U"]          & G_0\times^{P_0}\mathfrak{p}_n \arrow[r, "p_3=\mu"] & \mathfrak{g}_n \\
    &                                    & G_0\times^{U_0}\mathfrak{p}_n \arrow[ru, "p_2"'] \arrow[llu, "p_1"'] &                                     &    
\end{tikzcd} 
\]
Clearly,
\begin{align*}
{p_3}_!({{{p_2}^*\f^{G_0}_{P_0}}})^{-1}{p_1}^*(\mathcal{F})&={p_3}_!\underbrace{{\f^{G_0}_{P_0}}^{-1}{{q_P}^*}^{-1}{{q_U}^*}}_{({p_2}^*\f^{G_0}_{P_0})^{-1}}\underbrace{{{q_U}^*}^{-1}{h^*}^{-1}\pi^*}_{{p_1}^*}\\
&={p_3}_!((q_P\circ h)^*\f^{G_0}_{P_0})^{-1}\pi^*(\mathcal{F})\\
&={\mu}_!(e^*\f^{G_0}_{P_0})^{-1}\pi^*(\mathcal{F})\\
&=\ind(\mathcal{F}).
\end{align*}
\end{proof}
	So from now we can use any of the induction diagrams defined above.

\subsection{Cleanness for cuspidal pairs}

\begin{theorem}\label{th55}
$(\mathcal{O},\mathcal{L}) \in \mathscr{I}(\mathfrak{g}_n)^{\cu}$ is clean.
\end{theorem}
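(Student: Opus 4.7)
The plan is to deduce cleanness on $\mathfrak{g}_n$ from cleanness on the nilpotent cone $\mathcal{N}_G$, via direct restriction of a clean IC-sheaf. By the definition of cuspidality on $\mathfrak{g}_n$, there is a pair $(C,\mathcal{E})\in\mathscr{I}(G)^{0-\cu}$ with $C\cap\mathfrak{g}_n=\mathcal{O}$ and $\mathcal{L}=\mathcal{E}|_{\mathcal{O}}$. Invoking the assumed Mautner cleanness conjecture (Conjecture~\ref{con}), $\mathcal{IC}(C,\mathcal{E})$ is $l$-clean on $\mathcal{N}_G$; equivalently, $\mathcal{IC}(C,\mathcal{E})\cong (j_C)_!\mathcal{E}[\dim C]$ where $j_C\colon C\hookrightarrow\mathcal{N}_G$.

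The next step is to pull back along the closed inclusion $i\colon \mathfrak{g}_n\hookrightarrow\mathfrak{g}$. Using that $\mathfrak{g}_n\subset\mathcal{N}_G$ when $n\neq 0$ (and that $\mathcal{O}\subset\mathcal{N}_{G_0}$ when $n=0$), together with $i^{-1}(C)=\mathcal{O}$, proper base change applied to the clean identity above yields
\[
i^*\mathcal{IC}(C,\mathcal{E})\cong (j_\mathcal{O})_!\mathcal{L}[\dim C],\qquad j_\mathcal{O}\colon\mathcal{O}\hookrightarrow\mathfrak{g}_n.
\]
The right-hand side is, by construction, extended by zero from $\mathcal{O}$, so it already has vanishing stalks on $\bar{\mathcal{O}}\setminus\mathcal{O}$. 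If I can identify it, up to a shift, with $\mathcal{IC}(\mathcal{O},\mathcal{L})$, then cleanness is immediate.

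To perform that identification I would establish the transversality statement $T_xC + T_x\mathfrak{g}_n = T_x\mathfrak{g}$ at every $x\in\mathcal{O}$. Writing $T_xC=[\mathfrak{g},x]$ and $T_x\mathfrak{g}_n=\mathfrak{g}_n$, this reduces to the surjectivity of $\operatorname{ad}(x)\colon \mathfrak{g}_{k-n}\to\mathfrak{g}_k$ for every $k\neq n$. Theorem~\ref{th1} supplies an $\mathfrak{sl}_2$-triple $(x,h,f)$ with $h\in\mathfrak{g}_0$ and $f\in\mathfrak{g}_{-n}$, and the surjectivity then follows from $\mathfrak{sl}_2$-representation theory applied summand-wise to the decomposition of $\mathfrak{g}$ into irreducible $\mathfrak{sl}_2$-modules; the pretty-good-prime hypothesis in Assumption~\ref{char} ensures that this $\mathfrak{sl}_2$-theory holds as in characteristic zero. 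Transversality gives $\dim\mathcal{O} = \dim C - (\dim\mathfrak{g} - \dim\mathfrak{g}_n)$, and combined with cleanness of $\mathcal{IC}(C,\mathcal{E})$ implies that $i$ is non-characteristic with respect to $\mathcal{IC}(C,\mathcal{E})$. The standard non-characteristic restriction theorem then produces
\[
(j_\mathcal{O})_!\mathcal{L}[\dim\mathcal{O}] \;\cong\; i^*\mathcal{IC}(C,\mathcal{E})[\dim\mathcal{O}-\dim C] \;\in\; \p_{G_0}(\mathfrak{g}_n,\Bbbk).
\]
Being perverse with simple open-stratum restriction $\mathcal{L}[\dim\mathcal{O}]$, this complex must coincide with $\mathcal{IC}(\mathcal{O},\mathcal{L})$, and hence $\mathcal{IC}(\mathcal{O},\mathcal{L})$ is itself an extension by zero from $\mathcal{O}$, proving cleanness.

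The main obstacle is the transversality/non-characteristic restriction step. Cleanness of $\mathcal{IC}(C,\mathcal{E})$ is what makes this tractable: it reduces the restriction to base change on a shifted local system, sidestepping any need to analyze micro-support along singularities of $\bar C$. The hypothesis that $l$ is a pretty good prime is essential for transferring the graded Jacobson--Morozov theorem and the $\mathfrak{sl}_2$-representation-theoretic surjectivity to the modular setting; for $n=0$ the argument requires minor adjustments since $\mathfrak{g}_0\not\subset\mathcal{N}_G$, but the local transversality analysis at $\mathcal{O}\subset\mathcal{N}_{G_0}$ goes through verbatim.
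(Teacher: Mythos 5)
The proposal takes a genuinely different route from the paper, but unfortunately the route breaks down at the transversality step: the claim $T_xC + T_x\mathfrak{g}_n = T_x\mathfrak{g}$ at $x\in\mathcal{O}$ is false in general. As you observe, transversality is equivalent to $\operatorname{ad}(x)\colon\mathfrak{g}_{k-n}\to\mathfrak{g}_k$ being surjective for all $k\neq n$, i.e.\ to $\operatorname{coker}(\operatorname{ad}(x))$ being concentrated in degree $n$. But $\operatorname{coker}(\operatorname{ad}(x))$ is dual to $\mathfrak{g}^{x'}$, which is spanned by the \emph{lowest} weight vectors of the irreducible $\mathfrak{sl}_2$-summands of $\mathfrak{g}$, and these sit in negative $h$-weights, hence typically in low $\chi$-degrees, not in degree $n$. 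Concretely, take $G=SL_3$, $\chi(t)=\operatorname{diag}(t,1,t^{-1})$, $n=1$, $x=E_{12}+E_{23}$ the principal nilpotent, $C=C_{\mathrm{reg}}$ (which carries a $0$-cuspidal local system for $l\neq 3$). Then $\mathfrak{g}_{-2}=\langle E_{31}\rangle$ is one-dimensional while $\mathfrak{g}_{-1}$ is two-dimensional, so $\operatorname{ad}(x)\colon\mathfrak{g}_{-2}\to\mathfrak{g}_{-1}$ cannot be surjective and transversality fails at $k=-1$. Dimensionally, your formula would force $\dim\mathcal{O}=\dim C-(\dim\mathfrak{g}-\dim\mathfrak{g}_n)=6-(8-2)=0$, whereas $\mathcal{O}$ is the dense $G_0$-orbit in $\mathfrak{g}_1$ with $\dim\mathcal{O}=2$. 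So the inclusion $\mathfrak{g}_n\hookrightarrow\mathfrak{g}$ is not non-characteristic for $\mathcal{IC}(C,\mathcal{E})$, and the shift count in your display is wrong.

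There is also a secondary gap even assuming transversality: a perverse $!$-extension $(j_\mathcal{O})_!\mathcal{L}[\dim\mathcal{O}]$ need not be $\mathcal{IC}(\mathcal{O},\mathcal{L})$. Adjunction rules out quotients supported on $\bar{\mathcal{O}}\setminus\mathcal{O}$, but not subobjects, so ``perverse with simple open-stratum restriction'' is not enough to force $(j_\mathcal{O})_!\mathcal{L}[\dim\mathcal{O}]\cong\mathcal{IC}(\mathcal{O},\mathcal{L})$. The paper's proof is structured to sidestep both issues: it runs a descending induction on boundary orbits $X\subset\bar{\mathcal{O}}\setminus\mathcal{O}$, builds a Slodowy-type slice $\Sigma=x+\mathfrak{g}^{x'}$ and its graded piece $\tilde\Sigma=\Sigma\cap\mathfrak{g}_n$ through a point $x\in X$ (where the slice, unlike $\mathfrak{g}_n$, genuinely \emph{is} transverse to the orbit), exhibits a repelling $\mathbb{C}^\times$-action on the slice to invoke Braden's hyperbolic localization, and then runs the argument simultaneously for $\mathcal{L}$ and $\mathcal{L}^\lor$ so that a Verdier-duality degree count kills the stalk at $x$. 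The Mautner cleanness input is common to both arguments, but the geometric mechanism by which it is transported from $\mathcal{N}_G$ to $\mathfrak{g}_n$ is very different, and the direct-restriction mechanism you propose does not work.
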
                       

\begin{proof}
Let $(\mc{O},\mc{L})\in \ms{I}(\mf{g}_n)^{\cu}$ and  $(C,\mc{E})\in \ms{I}(G)^{0-\cu}$ so that $C\cap \mf{g}_n=\mc{O}$ and $\mc{L}=\mc{E}|_{\mc{O}}$.
	Note that $\mathscr{E}^\lor$ is also cuspidal by Remark \ref{rk1.2.1}.  Let $X$ be another $G_0$-orbit in $\mf{g}_n$ other than $\mathcal{O}$. We will show that $\mc{IC}(\mathcal{O},\mathcal{L})|_X=0$ and 
 $\mc{IC}(\mathcal{O},\mathcal{L}^\lor)|_X=0$. For descending induction, assume it is true for orbits $X'$, where $\dim(X)< \dim(X')< \dim(\mf{g}_n)$.
 Let $x\in X$. By Theorem \ref{th1}, we can find \[\phi :\mf{sl}_2 \to \mf{g} \text{ such that }\phi(e)=x\in \mf{g}_n, \phi(f)=x'\in \mf{g}_{-n}, \phi(h) \in \mf{g}_0\]where $e,f,h$ are defined in  the background. Let $\tilde{\phi}: SL_2\to G$ be such that $d\tilde{\phi}=\phi$. Define $\chi': \mb{C}^\times \to G$ by

\[\chi'(a) = \tilde{\phi} \begin{pmatrix} a & 0\\ 0 & a^{-1} \end{pmatrix}.\]

Let ${\mf{g}}^{x'}$ be the centralizer of $x'$ in $\mf{g}$. Let $\Sigma=x+{\mf{g}}^{x'}$ and  $\tilde{\Sigma}=\Sigma \cap \mf{g}_n$.
According to Slodowy \cite[pp. ~109]{Sw},
\begin{align}\label{7}
\Sigma \text{ is transversal to the }G \text{-orbit of }x\text{ in }\mf{g}.
\end{align}
Now $\mb{C}^\times$ acts on $G$ by conjugation via $\chi$ and on $\mf{g}$ by $a^{-n}Ad(\chi(a))$, call it $\psi$, which fixes $x$ and preserves $\Sigma$ as $x'\in \mf{g}_n$. The action of $G$ on $\mf{g}$ is $\mb{C}^\times$-equivariant. So we can restrict the action to the fixed point sets of $\mb{C}^\times$-actions 
  and  see that $G_0$ acts on  $\mf{g}_n$.
Using (\ref{7}) we deduce that 
\begin{align}\label{8}
\tilde{\Sigma}\text{ is transverse to the }G_0\text{-orbit of }x\text{ in }\mf{g}_n.
\end{align}

Now we define another action $\psi'$, $\mb{C}^\times$ acts on $\Sigma$ by  $a\to a^{-2}Ad(\chi'(a))$. This action is well-defined; if $x+y \in \Sigma$,  then $[y,x']=0$; so $[Ad(\chi'(a)y,Ad(\chi'(a))x']=0$. Let $c_{\chi'(a)}$ denote the conjugation by $\chi'(a)$. Now $Ad(\chi'(a))x'=d(c_{\chi'(a)})d\tilde{\phi}|_f=d(c_{\chi'(a)} \tilde{\phi})|_f=a^{-2}d\tilde{\phi}|_f=a^{-2}x'$. So we have $[Ad(\chi'(a))y, x']=0$. 
Also $Ad(\chi'(a))x=d(c_{\chi'(a)})\circ d\tilde{\phi}|_e=d(c_{\chi'(a)}\circ \tilde{\phi})|_e=a^2d\tilde{\phi}|_e=a^2x$. Hence $a^{-2}Ad(\chi'(a))(x+y) \in \Sigma$. Now we will show that  
\begin{align}\label{9}
\text{this action $\psi'$ stabilizes }\tilde{\Sigma}\text{  and }\mathcal{O}\cap \Sigma. 
\end{align}
To show the first part it is enough to show that if $y\in \mf{g}_n$, then $Ad(\chi'(a))y \in \mf{g}_n$, because we already have shown that $\Sigma$ is stable under this action. Now $\phi(h)\in \mf{g}_0$, so the Lie subalgebra generated by $\phi(h)$ is in $\mf{g}_0$. Thus by \cite[ Theorem ~13.1]{hum}, $\tilde{\phi}   \begin{pmatrix} a & 0\\ 0 & a^{-1} \end{pmatrix} \subset G_0$. Therefore $\chi'$ commutes with $\chi$                                        
 and we are done with the proof  that $Ad(\chi'(a))y \in \mf{g}_n$. If $y\in \mc{O}$, then $Ad(\chi'(a)y$ is  also in $\mc{O}$ as $Im( \chi')\subset G_0$ and $\mc{O}$ is a $G_0$-orbit. 
 
 Now we can consider a $\mf{sl}_2$ action on $\mf{g}$ by $(s,v)\in \mf{sl}_2\times \mf{g}$ goes to $[\phi(s), v]$. Via this action the Lie algebra generated by $\phi(h)$ acts on $\mf{g}^{x'}$. The unique lift of this action after multiplying by $t^{-2}$, where $t\in \mb{C}^\times$, gives rise to $\psi'$, that we talked already. Now in the original action $f$ acts on $\mf{g}^{x'}$ gives $0$. Therefore all the eigen values of the action of $h$ on $\mf{g}^{x'}$ will be negative.   
\begin{align}\label{10}
\text{Hence the action $\psi'$ is a repelling action   on }\Sigma\text{ to }x.
\end{align}
\begin{align}\label{11}
\text{ by Conjecture \ref{con}, } \mc{IC}({C},\mc{E})|_{\bar{{C}}-{C}}=0. 
\end{align}
As $\mc{E}^\lor$ is also cuspidal,  the same result is true for $\mc{E}^\lor$.

Using the transversal property of (\ref{7}) and the definition of transversal slice, the map $\mu:G\times \Sigma  \to \mf{g}$ is smooth of relative dimension $\dim G - \dim G\cdot x$.

Hence by \cite[pp. ~110]{BBD}, pullback with a shift takes $\mc{IC}$'s to $\mc{IC}$'s.
\[
\begin{tikzcd}
G\times \Sigma \arrow[r] \arrow[r] \arrow[r, "\mu"] & \mf{g}\\
 \Sigma \arrow[u, "h", hook] \arrow[ru, hook]        &        
\end{tikzcd}\] where $\mu $ is smooth and $h$ induces the induction equivalence, $D^b(\Sigma)\cong D^b_G(G\times \Sigma)$.
Hence from the above diagram, we can say that 
 \begin{align}\label{12}
 \mc{IC}({C},\mc{E})|_{\Sigma}=\mc{IC}({C}\cap \Sigma, \mc{E}|_{C\cap\Sigma})[m]
 \end{align}
  where $m=\dim{C}-\dim({C}\cap \Sigma)$. Similarly, $\mc{IC}({C},\mc{E}^\lor)|_{\Sigma}=\mc{IC}({C}\cap \Sigma, \mc{E}^\lor|_{C\cap \Sigma})[m]$.
  By (\ref{11}) and (\ref{12}), 
 \begin{align}\label{13}
 \mc{IC}({C}\cap \Sigma, \mc{E}|_{C\cap \Sigma})|_{(\bar{{C}}-{C})\cap \Sigma}=0\text{ and  }\mc{IC}({C}\cap \Sigma, \mc{E}^\lor|_{C\cap \Sigma})|_{(\bar{{C}}-{C})\cap \Sigma}=0. 
 \end{align}
\item Using the repelling action from (\ref{10}) and  Lemma \ref{lm3.9}, \cite[Theorem ~1]{BR}, we get, 
\begin{align}\label{14}
\begin{cases} \mc{IC}({C}\cap \Sigma, \mc{E}|_{C\cap \Sigma})_x=R\Gamma(\mc{IC}({C}\cap \Sigma, \mc{E}|_{C\cap \Sigma}),\text{ and}\\
\mc{IC}({C}\cap \Sigma, \mc{E}^\lor|_{C\cap \Sigma})_x=R\Gamma(\mc{IC}({C}\cap \Sigma, \mc{E}^\lor|_{C\cap \Sigma}).
\end{cases}
\end{align}
 Now $x\in (\bar{{C}}-{C})\cap \Sigma$, so from (\ref{13}), 
\begin{align}\label{15}\mc{IC}({C}\cap \Sigma, \mc{E}|_{C\cap\Sigma})_x=0\text{. So by (\ref{14}), } R\Gamma(\mc{IC}({C}\cap \Sigma, \mc{E}|_{C\cap \Sigma}))=0.
\end{align}This implies,
\begin{align}\label{16}
\begin{cases}
R\Gamma_c(\mc{IC}({C}\cap \Sigma,\mc{E}^\lor|_{C\cap \Sigma}))&= R\Gamma_c(\mathbb{D}\mc{IC}({C}\cap \Sigma,\mc{E}|_{C\cap \Sigma}))\\ &=\mb{D}R\Gamma(\mc{IC}({C}\cap \Sigma, \mc{E}|_{C\cap \Sigma}))=0.
\end{cases}
\end{align}
Similarly, 
\begin{align}\label{17}
R\Gamma(\mc{IC}({C}\cap \Sigma, \mc{E}^\lor|_{C\cap \Sigma})=0\text{ and }R\Gamma_c(\mc{IC}({C}\cap \Sigma,\mc{E}|_{C\cap \Sigma}))=0.
\end{align}
Now we claim  that 
\begin{align}\label{18}
R\Gamma_c(\mc{E}^\lor|_{C\cap \Sigma})=0.
\end{align} From the open-closed embedding,
\[
{C}\cap \Sigma\xhookrightarrow{j} \bar{C}\cap\Sigma\xhookleftarrow{i}(\bar{{C}}-{C})\cap \Sigma\]  gives us the distinguished triangle,
 
 \[j_!j^*\mc{IC}({C}\cap \Sigma,\mc{E}^\lor|_{C\cap \Sigma})\to \mc{IC}({C}\cap \Sigma,\mc{E}^\lor|_{C\cap \Sigma}) \to i_*i^*\mc{IC}({C}\cap \Sigma,\mc{E}^\lor|_{C\cap \Sigma})\to\hspace{5mm}.
 \] We can apply $R\Gamma_c$ to get
 \[
 R\Gamma_c(j_!j^*\mc{IC}({C}\cap \Sigma,\mc{E}^\lor|_{C\cap \Sigma}))\to R\Gamma_c(\mc{IC}({C}\cap \Sigma,\mc{E}^\lor|_{C\cap \Sigma})) \to R\Gamma_c(i_*i^*\mc{IC}({C}\cap \Sigma,\mc{E}^\lor|_{C \cap \Sigma}))\to\hspace{5mm}.
 \] The first term in this distinguished triangle is $R\Gamma_c(\mc{E}^\lor|_{C\cap \Sigma})$ with a shift. The second term is $0$ by (\ref{16}) and third term is $0$ by (\ref{13}), hence  (\ref{18}) is proved.
\item     
 For the action $\psi$ of $\mb{C}^\times$  on ${C}\cap \Sigma$ by $a \to a^{-n}Ad(\chi(a))$, the fixed point set is $\mathcal{O}\cap \tilde{\Sigma}$. 
 So by Lemma \ref{lm9},  
 \begin{align}\label{19}
 R\Gamma_c({\mc{L}}^\lor|_{\mc{O}\cap \tilde{\Sigma}}  )=0.
 \end{align}    
  By the transversal property (\ref{8}), we have, 
  \begin{align}\label{20}
  \mc{IC}(\mathcal{O}, \mathcal{L})|_{\tilde{\Sigma}}=\mc{IC}(\mathcal{O}\cap \tilde{\Sigma}, {\mc{L}}_{\mc{O}\cap \tilde{\Sigma}})[n]
  ,\end{align} and
  \begin{align}\label{20'}
  \mc{IC}(\mathcal{O}, \mathcal{L}^\lor)|_{\tilde{\Sigma}}=\mc{IC}(\mathcal{O}\cap \tilde{\Sigma}, {\mc{L}^\lor}_{\mc{O}\cap \tilde{\Sigma}})[n],
  \end{align} where  $n=\dim \mathcal{O}- \dim \mathcal{O}\cap \tilde{\Sigma}$.
  
  By repelling property (\ref{10}) we have, 
  \begin{align}\label{21}
  \mc{IC}(\mathcal{O}\cap \tilde{\Sigma}, {\mc{L}}|_{\mc{O}\cap \tilde{\Sigma}})_x= R\Gamma(\mc{IC}(\mathcal{O}\cap \tilde{\Sigma},    {\mc{L}}|_{\mc{O}\cap \tilde{\Sigma}}))
  \end{align}and
  \begin{align}\label{22}
  \mc{IC}(\mathcal{O}\cap \tilde{\Sigma}, {\mc{L}}^\lor|_{\mc{O}\cap \tilde{\Sigma}})_x= R\Gamma(\mc{IC}(\mathcal{O}\cap \tilde{\Sigma},{\mc{L}}^\lor|_{\mc{O}\cap \tilde{\Sigma}})).
  \end{align}
  
  Here $\overline{\mathcal{O}\cap\tilde{\Sigma}}-\{x\}$ is the union of $V\cap \tilde{\Sigma}$, where each $V$ is a $G_0$ orbit whose closure contains $x$, hence also $X$. So $\dim{V}> \dim{X}$. Also $\overline{\mathcal{O}\cap\tilde{\Sigma}}\cap X=\{x\}$. So we can use the induction hypothesis on $\overline{\mathcal{O}\cap\tilde{\Sigma}}-(\mathcal{O}\cap \tilde{\Sigma})-\{x\}$ and (\ref{20'}),  therefore, 
  \begin{align}\label{23}
  \mc{IC}(\mathcal{O}\cap\tilde{\Sigma},{\mc{L}}^\lor|_{\mc{O}\cap\tilde{\Sigma}})\text{ is }0\text{ on }\overline{\mathcal{O}\cap\tilde{\Sigma}}-(\mathcal{O}\cap \tilde{\Sigma})-\{x\}.
 \end{align} 
Now we use the open and closed embeddings below for $\mc{IC}(\mathcal{O}\cap \tilde{\Sigma},{\mc{L}}^\lor|_{\mc{O}\cap \tilde{\Sigma}})|_{ \overline{\mathcal{O}\cap \tilde{\Sigma}}-\{x\} }$, 
  \[ \mathcal{O}\cap\tilde{ \Sigma}\xhookrightarrow{j} \overline{\mathcal{O}\cap \tilde{\Sigma}}-\{x\}\xhookleftarrow{i} \overline{\mathcal{O}\cap\tilde{\Sigma}}-(\mathcal{O}\cap \tilde{\Sigma})-\{x\}.
  \] This gives us the distinguished triangle,
  \[j_!j^* \mc{IC}(\mathcal{O}\cap \tilde{\Sigma},{\mc{L}}^\lor|_{\mc{O}\cap \tilde{\Sigma}})|_{ \overline{\mathcal{O}\cap \tilde{\Sigma}}-\{x\}}\to \mc{IC}(\mathcal{O}\cap \tilde{\Sigma},{\mc{L}}^\lor|_{\mc{O}\cap \tilde{\Sigma}})|_{ \overline{\mathcal{O}\cap \tilde{\Sigma}}-\{x\}} \to i_*i^* \mc{IC}(\mathcal{O}\cap \tilde{\Sigma},{\mc{L}}^\lor|_{\mc{O}\cap \tilde{\Sigma}})|_{ \overline{\mathcal{O}\cap \tilde{\Sigma}}-\{x\}}\to.\] 
We have, 
\begin{equation}\label{eq20''}
R\Gamma_c(\mc{IC}(\mathcal{O}\cap \tilde{\Sigma},{\mc{L}}^\lor|_{\mc{O}\cap \tilde{\Sigma}}))|_{\overline{\mathcal{O}\cap \tilde{\Sigma}}-\{x\}}=0
\end{equation}
as the first term in the distinguished triangle, $R\Gamma_c(\mc{L}^\lor|_{\mc{O}\cap \tilde \Sigma})$ vanishes by (\ref{19}) and the third term vanishes by (\ref{23}).
  
  \item Now from the open-closed embedding,
  
  \[\{x\}\xhookrightarrow{i}\overline{\mathcal{O}\cap \tilde{\Sigma}}\xhookleftarrow{j} \overline{\mathcal{O}\cap \tilde{\Sigma}}-\{x\}\]
  we get, 
  \[j_!j^*\mc{IC}(\mathcal{O}\cap \tilde{\Sigma},{\mc{L}}^\lor|_{\mc{O}\cap \tilde{\Sigma}})\to \mc{IC}(\mathcal{O}\cap \tilde{\Sigma},{\mc{L}}^\lor|_{\mc{O}\cap \tilde{\Sigma}}) \to i_*i^* \mc{IC}(\mathcal{O}\cap \tilde{\Sigma},{\mc{L}}^\lor|_{\mc{O}\cap \tilde{\Sigma}})
  .\]By (\ref{eq20''}),
\begin{align}\label{24}
\mc{IC}(\mathcal{O}\cap \tilde{\Sigma}, \mc{L}^\lor|_{\mc{O}\cap \tilde{\Sigma}})_x=R\Gamma_c \mc{IC}(\mathcal{O}\cap \tilde{\Sigma}, \mc{L}^\lor|_{\mc{O}\cap \tilde \Sigma}).
\end{align}
    
  \item  From (\ref{21}), \[
   \mathbb{D}\mc{IC}(\mathcal{O}\cap \tilde{\Sigma}, {\mc{L}}|_{\mc{O}\cap \tilde{\Sigma})_x}= \mathbb{D}R\Gamma(\mc{IC}(\mathcal{O}\cap \tilde{\Sigma},{\mc{L}}|_{\mc{O}\cap \tilde {\Sigma}}))
     =
                                                               R\Gamma_c\mc{IC}(\mathcal{O}\cap \tilde{\Sigma}, {\mc{L}}^\lor|_{\mc{O}\cap \tilde{\Sigma}})
  \]
Hence from (\ref{24}), we get,  \[
\mathbb{D}\mc{IC}(\mathcal{O}\cap \tilde{\Sigma}, {\mc{L}}|_{\mc{O}\cap \tilde{\Sigma})_x}= \mc{IC}(\mathcal{O}\cap \tilde{\Sigma}, {\mc{L}}^\lor|_{\mc{O}\cap \tilde{\Sigma}})_x.
\]  Since $\mc{IC}(\mathcal{O}\cap \tilde{\Sigma}, {\mc{L}}^\lor|_{\mc{O}\cap \tilde{\Sigma}})_x$ lives in degrees $<0$. Hence $\mathbb{D}\mc{IC}(\mathcal{O}\cap \tilde{\Sigma}, {\mc{L}}^\lor|_{\mc{O}\cap \tilde{\Sigma}})_x$ lives in degrees $>0$. But $\mc{IC}(\mathcal{O}\cap \tilde{\Sigma}, {\mc{L}}^\lor|_{\mc{O}\cap \tilde{\Sigma}})_x$ again lives in degrees $<0$, which is a contradiction. So $\mc{IC}(\mathcal{O}\cap \tilde{\Sigma}, {\mc{L}}^\lor|_{\mc{O}\cap \tilde{\Sigma}})_x=0$ and by (\ref{20}), $\mc{IC}(\mathcal{O},\mathcal{L})_x=0$. Hence we are done.
\end{proof}
\begin{corollary}\label{existance for cuspidal}
For $(\mathcal{O},\mathcal{L}) \in \mathscr{I}(\mathfrak{g}_n)^{\cu}$, the parity sheaf $\E(\mathcal{O},\mathcal{L})$ exists and $\IC(\mathcal{O},\mathcal{L})=\E(\mathcal{O},\mathcal{L})$.	
\end{corollary}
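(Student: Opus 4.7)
The plan is to deduce the corollary directly from the cleanness theorem \ref{th55}, since once cleanness is established, the identification of $\mc{IC}$ with a parity sheaf is essentially formal. First, I would apply Theorem \ref{th55} both to $(\mc{O},\mc{L})$ and to $(\mc{O},\mc{L}^\lor)$ (the latter is cuspidal by Remark \ref{rk1.2.1}), and combine this with Verdier duality to conclude that both the stalks and the costalks of $\mc{IC}(\mc{O},\mc{L})$ vanish on $\bar{\mc{O}} \setminus \mc{O}$. Writing $j_\mc{O}:\mc{O} \hookrightarrow \mf{g}_n$ for the locally closed inclusion, this is equivalent to
\[
\mc{IC}(\mc{O},\mc{L}) \;\cong\; (j_\mc{O})_!\mc{L}[\dim\mc{O}] \;\cong\; (j_\mc{O})_*\mc{L}[\dim\mc{O}].
\]

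Next, I would verify the parity condition stratum by stratum in the $G_0$-orbit stratification of $\bar{\mc{O}}$. For any orbit $\mc{O}'$ appearing in the closure, with inclusion $i_{\mc{O}'}$, the displayed formula above shows that $i_{\mc{O}'}^*\mc{IC}(\mc{O},\mc{L})$ and $i_{\mc{O}'}^!\mc{IC}(\mc{O},\mc{L})$ are zero when $\mc{O}' \neq \mc{O}$ and equal to $\mc{L}[\dim\mc{O}]$ when $\mc{O}' = \mc{O}$. In particular all nonvanishing cohomology is concentrated in the single degree $-\dim\mc{O}$, so the complex is simultaneously $\ast$-parity and $!$-parity, and therefore a parity complex in the sense of subsection \ref{parity}. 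Applicability of the parity-sheaf framework here uses the cohomological vanishing hypothesis (\ref{vanish}) on the strata of $\mf{g}_n$, which is Theorem \ref{th1.6} (proved in section 6).

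Finally, the local system $\mc{L}$ is irreducible by definition of $\ms{I}(\mf{g}_n,\Bbbk)$ and absolutely irreducible by Assumption \ref{char}(2), hence indecomposable; meanwhile $\mc{IC}(\mc{O},\mc{L})$ is itself indecomposable with support $\bar{\mc{O}}$ and restricts to $\mc{L}[\dim\mc{O}]$ on the open stratum $\mc{O}$. Definition \ref{parity} together with the uniqueness statement of Theorem \ref{th2.3} then identifies $\mc{IC}(\mc{O},\mc{L})$ with the parity sheaf $\mc{E}(\mc{O},\mc{L})$, yielding both existence and the desired equality. The substantive obstacle has already been overcome in Theorem \ref{th55}; the only remaining care is to keep track of $\mc{L}$ and $\mc{L}^\lor$ in parallel so as to upgrade the vanishing of stalks to the vanishing of costalks, and to ensure the parity-sheaf formalism is available via Theorem \ref{th1.6}.
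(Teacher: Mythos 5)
Your proof is correct and takes essentially the same route as the paper: deduce cleanness from Theorem \ref{th55}, identify $\mc{IC}(\mc{O},\mc{L})$ with the clean extension, observe the parity condition is then immediate, and invoke uniqueness of indecomposable parity complexes. The paper's own proof is more terse, simply writing $\mc{IC}(\mc{O},\mc{L})=j_!\mc{L}[\dim\mc{O}]$ and asserting this "obviously satisfies the parity condition"; your version is actually a bit more careful in making explicit that verifying the $!$-parity direction requires cleanness of $\mc{L}^\lor$ (via Remark \ref{rk1.2.1} and Verdier duality) so that $j_!\mc{L}[\dim\mc{O}]\cong j_*\mc{L}[\dim\mc{O}]$ and both stalks and costalks vanish off $\mc{O}$, a point the paper elides.
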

\begin{proof}
	From the previous theorem $(\mathcal{O},\mathcal{L})$ is clean, i.e  $\IC(\mathcal{O},\mathcal{L}) $  restricted to $\bar{\mathcal{O}}-\mathcal{O}$ is $0$. So $\IC(\mathcal{O},\mathcal{L})=j_!\mc{L}[\dim{\mathcal{O}}] $, where $j:\mathcal{O}\xhookrightarrow{} g_n$, which obviously satisfies the parity condition. Hence by uniqueness of an indecomposable parity complex, 
$\IC(\mathcal{O},\mathcal{L})=\E(\mathcal{O},\mathcal{L})$.
\end{proof}

  \section{Induction preserves parity for cuspidal pairs}\label{sec5}
 \subsection{Parabolic induction diagram for cuspidal pairs}\label{subsec5.2}
Recall the parabolic induction diagram we defined in \ref{cuspidal},

\begin{equation}\label{26}
\mathscr{N}_L\xleftarrow{\pi_P}\mathscr{N}_L+\mathfrak{u}_P\xrightarrow{e_P}G\times^{P}(\mathscr{N}_L+\mathfrak{u}_P)\xrightarrow{\mu_P}\mathscr{N}_G
\end{equation}
If instead of working with a general pair in $\ms{I}(L)$, we work with a pair $(C,\mc{E})\in \ms{I}(L)^{0-\cu}$, then we can do a slight modification to our standard parabolic induction diagram. 
We define a diagram associated with $P,L,C,\mc{E}$,
\[C\xleftarrow{a} C+\mf{u}_P\xrightarrow{b} G\times^P(C+\mf{u}_P)\xrightarrow{c}\mc{N}_G,\]Here $b$ is the obvious map and \[a(x)=\pi_P(x), c(g,x)=Ad(g)x.\] Now we define $\Ind^G_P$ for the cuspidal pair as 
\begin{equation}\label{27}
\Ind^G_P(\mc{IC}(C,\mc{E}))=c_!(b^*\f^G_P)^{-1}a^*\mc{E}[\dim C],
\end{equation} where $(b^*\f^G_P)^{-1}$ is the induction equivalence map, hence inverse makes sense. 
\begin{lemma}
This definition of parabolic induction for cuspidal pairs coincides with the definition we first gave.	
\end{lemma}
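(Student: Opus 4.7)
The plan is to leverage cleanness of $0$-cuspidal pairs to collapse the intersection cohomology complex into an ordinary $!$-pushforward, and then commute pullbacks and the induction equivalence with the closed immersions that relate the two diagrams. Throughout, I abbreviate the induction equivalence by $\Phi := e_P^*\f^G_P$.

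By Mautner's cleanness conjecture (Conjecture \ref{con}), which is assumed throughout the paper, the $0$-cuspidal pair $(C,\mc{E})\in \ms{I}(L)^{0-\cu}$ is $l$-clean, so if $j:C\hookrightarrow \mc{N}_L$ denotes the locally closed inclusion, then $\mc{IC}(C,\mc{E})\cong j_!\mc{E}[\dim C]$. Plugging this into the original definition from (\ref{26}), the task reduces to verifying
\[
{\mu_P}_!\,\Phi^{-1}\,\pi_P^*\bigl(j_!\mc{E}\bigr)[\dim C] \;\cong\; c_!\,(b^*\f^G_P)^{-1}\,a^*\mc{E}\,[\dim C].
\]

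I would carry out three successive identifications. First, $a$ is the restriction of $\pi_P$ to $C+\mf{u}_P = \pi_P^{-1}(C)$, so the inclusion $j':C+\mf{u}_P\hookrightarrow \mc{N}_L+\mf{u}_P$ fits into a cartesian square with $j$, $a$, $\pi_P$; base change gives $\pi_P^*j_!\cong j'_!a^*$. Second, the inclusion $k: G\times^P(C+\mf{u}_P)\hookrightarrow G\times^P(\mc{N}_L+\mf{u}_P)$ makes the square with $b$, $j'$, $e_P$ cartesian, and since $\Phi$ is the composition of a smooth pullback with a forgetful functor, it commutes with $!$-pushforward along $P$- and $G$-equivariant immersions; hence $\Phi^{-1} j'_! \cong k_!\,(b^*\f^G_P)^{-1}$. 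Third, $c = \mu_P\circ k$ gives ${\mu_P}_! k_! = c_!$. Composing the three isomorphisms yields (\ref{27}).

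The main subtlety is the middle step, which asserts that the induction equivalence intertwines $j'_!$ with $k_!$. Unpacking $\Phi$ through the standard presentation $Y\leftarrow G\times Y\to G\times^P Y$, this reduces to the fact that $!$-pushforward commutes with $*$-pullback along the smooth map $G\times Y\to G\times^P Y$ in a cartesian square, together with compatibility of forgetful functors with $!$-pushforward along equivariant immersions. All remaining steps are routine base change or definitional identities.
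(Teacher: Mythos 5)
Your argument is correct and takes the same route the paper intends: you invoke Conjecture \ref{con} to replace $\mc{IC}(C,\mc{E})$ by $j_!\mc{E}[\dim C]$ and then chase the commutative diagram via base change, which is exactly what the paper's terse ``using the commutative diagram below'' is asking the reader to do. Your three-step decomposition (base change across $\pi_P$ and $j$, compatibility of the induction equivalence with the cartesian square involving $e_P$ and $k$, and $c = \mu_P\circ k$) is the right unpacking, and the fibre-product checks you implicitly rely on (that $\pi_P^{-1}(C) = C+\mf{u}_P$ and that $e_P^{-1}(G\times^P(C+\mf{u}_P)) = C+\mf{u}_P$) both hold. The one imprecision is the phrase ``$\Phi$ is the composition of a smooth pullback with a forgetful functor'': the map $e_P$ is a closed immersion, not smooth, so $e_P^*$ is not a smooth pullback. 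But your very next sentence corrects this by passing to the presentation $Y\leftarrow G\times Y\to G\times^P Y$, where the quotient map really is smooth, so the substance of the argument survives. You could equally have dispensed with that detour by simply observing that base change holds for any cartesian square in which the horizontal maps are $e_P, b$ (closed immersions) and the verticals are $j', k$ (locally closed immersions), after which $\f^G_P$ commutes with $k_!$ on the nose.
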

\begin{proof}
	
By Conjecture \ref{con}, $(C,\mc{E})$ is clean and  using the commutative diagram below we can show this definition of  parabolic induction coincides with the original one for cuspidal pairs. 
\[\begin{tikzcd}
\mathcal{N}_L     & \mathcal{N}_L+\mathfrak{u}_P \arrow[l, "\pi_P"']  \arrow[r, "e_P"]  & G\times ^P(\mathcal{N}_L+\mathfrak{u}_P) \arrow[r, "\mu_P"] & \mathcal{N}_G      \\
C \arrow[u, hook] &                                                                                                 C+\mf{u}_P \arrow[l, "a"'] \arrow[u, hook] \arrow[r, "b"]            & G\times^P(C+\mf{u}_P) \arrow[u, hook] \arrow[r, "c"]                          & \mc{N}_G \arrow[u, "="]
\end{tikzcd}
\]
\end{proof}
 \subsection{Induction diagram for cuspidal pairs}\label{3.1}
 In this section, we  first redefine Lusztig's induction diagram for cuspidal pairs. Let $P$ be a parabolic subgroup of $G$ and $L,U$ be its Levi subgroup and the unipotent radical, respectively. Let $(\mc{O},\mc{L})\in \i'$.  We define the induction diagram to be,
  \begin{equation}
 	\begin{tikzcd}
\mc{O} & \mc{O}+{\mf{u}}_n \arrow[l, "p'_1"'] \arrow[r, "p'_2"] & G_0\times^{P_0}(\mc{O}+{\mf{u}}_n )\arrow[r, "p'_3"] & \mf{g}_n .
\end{tikzcd}
 \end{equation}        
We define $p'_3:G_0\times^{P_0}(\mc{O}+{\mf{u}}_n)$ $\to \mf{g}_n$ to be $p'_3(g,z)=Ad(g)z$, 
  \[p'_2:\mc{O}+{\mf{u}_n} \to G_0\times^{P_0}(\mc{O}+{\mf{u}}_n)\text {, }p'_2\text{ to be the obvious map and, }
\]
  $p'_1: \mc{O}+{\mf{u}}_n$ $\to \mc{O}$ to be $p'_1(z)=\pi(z)$, where $\pi:\mf{p}_n\to \mf{l}_n$.  
  We start with $(\mathcal{O},\mathcal{L})\in \ms{I}(\mf{l}_n)^{\cu}$ and redefine the induction diagram.
We define \[\ind(\mc{IC}(\mc{O},\mc{L}))={p'_3}_!(p_2'^*\f^{G_0}_{L_0})^{-1}p_1'^*(\mathcal{L}[\dim \mc{O}]).\] 
\begin{lemma}
This definition of induction for cuspidal pairs coincides with Lusztig's original definition.	
\end{lemma}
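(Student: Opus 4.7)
The plan is to exploit the cleanness of cuspidal pairs to identify $\mc{IC}(\mc{O},\mc{L})$ with a simple extension by zero, and then propagate this identification through Lusztig's induction diagram via base change. By Corollary \ref{existance for cuspidal} the pair $(\mc{O},\mc{L})$ is clean, so $\mc{IC}(\mc{O},\mc{L})\cong j_!\mc{L}[\dim\mc{O}]$, where $j:\mc{O}\xhookrightarrow{}\mf{l}_n$ is the inclusion. By Lemma \ref{match}, Lusztig's original induction is equivalently computed by $\mc{F}\mapsto \mu_!(e^*\f^{G_0}_{P_0})^{-1}\pi^*\mc{F}$ for the diagram $\mf{l}_n\xleftarrow{\pi}\mf{p}_n\xrightarrow{e}G_0\times^{P_0}\mf{p}_n\xrightarrow{\mu}\mf{g}_n$, so it suffices to evaluate this functor on $j_!\mc{L}[\dim\mc{O}]$ and check that the result matches the new formula.

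Next, I would realize the new diagram as a sequence of $P_0$-stable (respectively $G_0$-stable) closed embeddings into Lusztig's: $j:\mc{O}\xhookrightarrow{}\mf{l}_n$, $\tilde{j}:\mc{O}+\mf{u}_n\xhookrightarrow{}\mf{p}_n$, and $\tilde{j}':G_0\times^{P_0}(\mc{O}+\mf{u}_n)\xhookrightarrow{}G_0\times^{P_0}\mf{p}_n$, with the horizontal arrows $p'_1,p'_2,p'_3$ of the new diagram obtained as the restrictions of $\pi,e,\mu$. Each resulting square is Cartesian since $\mc{O}+\mf{u}_n=\pi^{-1}(\mc{O})$ and the subvariety $G_0\times^{P_0}(\mc{O}+\mf{u}_n)$ is the preimage of itself under $e$. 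Three standard compatibilities then chain together: smooth base change for $\pi$ gives $\pi^*j_!\cong \tilde{j}_!\,p_1'^*$; naturality of the induction equivalence with respect to the closed embedding $\tilde{j}$ gives $(e^*\f^{G_0}_{P_0})^{-1}\tilde{j}_!\cong \tilde{j}'_!(p_2'^*\f^{G_0}_{P_0})^{-1}$; and composition of proper pushforward yields $\mu_!\tilde{j}'_!\cong p'_{3!}$. Applying these in sequence to $\mc{L}[\dim\mc{O}]$ produces precisely the right-hand side $p'_{3!}(p_2'^*\f^{G_0}_{P_0})^{-1}p_1'^*(\mc{L}[\dim\mc{O}])$.

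The main subtlety is the middle step: the induction equivalence $D^b_{G_0}(G_0\times^{P_0}X)\simeq D^b_{P_0}(X)$ must be shown to commute with pushforward along any $P_0$-stable closed embedding. This is a formal but somewhat fiddly compatibility of the equivariant derived category, best checked via the simplicial construction of the equivariant derived category. A secondary point is the distinction between $\f^{G_0}_{P_0}$, which appears naturally in my derivation, and $\f^{G_0}_{L_0}$, which appears in the statement of the lemma: since $p'_1:\mc{O}+\mf{u}_n\to\mc{O}$ is a $U_0$-torsor, pullback along $p'_1$ induces an equivalence $D^b_{L_0}(\mc{O})\simeq D^b_{P_0}(\mc{O}+\mf{u}_n)$, and through this identification the two forgetful functors agree, so the two formulas coincide.
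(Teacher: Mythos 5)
Your argument is correct and takes essentially the same approach as the paper: the paper likewise uses cleanness (Theorem \ref{th55}, Corollary \ref{existance for cuspidal}) to replace $\mc{IC}(\mc{O},\mc{L})$ by $j_!\mc{L}[\dim\mc{O}]$ and then reads off the agreement from a commutative diagram embedding the cuspidal diagram into Lusztig's, leaving implicit the base-change and induction-equivalence compatibilities that you spell out. One small inaccuracy in your secondary point: $p_1'$ is not a $U_0$-torsor (it is a $P_0$-equivariant affine bundle on whose fibers $U_0$ does not act freely), and the interchangeability of $\f^{G_0}_{P_0}$ and $\f^{G_0}_{L_0}$ instead comes from the quotient equivalence $D^b_{P_0}(\mc{O})\simeq D^b_{L_0}(\mc{O})$, which holds because $U_0$ acts trivially on $\mc{O}$ itself.
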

\begin{proof}
	 By Theorem \ref{th55}, $(\mc{O},\mc{L})$ is clean and it coincides with the Lusztig's definition of induction because of the following commutative diagram.\[\begin{tikzcd}
\mathcal{O} \arrow[d, hook] & \mc{O}+\mathfrak{u}_n \arrow[l, "p_1'"'] \arrow[r, "p_2'"] \arrow[d, hook] & G_0\times^{P_0}(\mc{O}+{\mathfrak{u}}_n )\arrow[r, "p_3'"] \arrow[d, hook] & \mathfrak{g}_n \arrow[d] \\
\mathfrak{l}_n              & E' \arrow[l, "p_1"'] \arrow[r, "p_2"]                                     & E'' \arrow[r, "p_3"]                                    & \mathfrak{g}_n          
\end{tikzcd}\]

\end{proof}

\subsection{Parity preserved for cuspidal pairs}

 \begin{theorem}\label{th5}
 Let $P$ be a parabolic subgroup of G and $L$ be its Levi subgroup. If $(\mathcal{O}, \mathcal{L})\in \i'$, then $\ind(\mathcal{E}(\mathcal{O}, \mathcal{L}))$ is parity.
 	
 \end{theorem}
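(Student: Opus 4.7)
The plan is to relate $\ind(\E(\mc{O},\mc{L}))$ on $\mf{g}_n$ to a parity complex on the nilpotent cone $\mc{N}_G$ via a $\mb{C}^\times$-fixed point argument, and then to verify that restriction to the fixed locus preserves the parity property. Since $(\mc{O},\mc{L}) \in \ms{I}(\mf{l}_n)^{\cu}$, by definition there exists $(C,\mc{E}) \in \ms{I}(L)^{0-\cu}$ with $C \cap \mf{l}_n = \mc{O}$ and $\mc{L} = \mc{E}|_{\mc{O}}$. By Conjecture~\ref{con} and Corollary~\ref{existance for cuspidal}, both $\IC(C,\mc{E})$ and $\IC(\mc{O},\mc{L}) = \E(\mc{O},\mc{L})$ are clean. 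By Conjecture~\ref{2.6(c)}, the sheaf $\mc{G} := \Ind^G_P(\IC(C,\mc{E}))$ is a parity complex on $\mc{N}_G$, and this is the object I would restrict.

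First I would introduce the twisted $\mb{C}^\times$-action $t \cdot x = t^{-n} Ad(\chi(t)) x$ on $\mc{N}_G$, whose fixed locus is exactly $\mf{g}_n$ (for $n \neq 0$), together with the compatible action on each term of the nilpotent-cone cuspidal induction diagram of Subsection~\ref{subsec5.2}. The fixed-point subdiagram recovers the cuspidal graded induction diagram of Subsection~\ref{3.1}: the fixed locus of $C+\mf{u}_P$ is $\mc{O}+\mf{u}_n$, and that of $G \times^P (C+\mf{u}_P)$ is $G_0 \times^{P_0} (\mc{O}+\mf{u}_n)$. Using cleanness of $(C,\mc{E})$ to rewrite $\IC(C,\mc{E})$ as $j_!\mc{E}[\dim C]$, and proper base change around the Cartesian squares of fixed-point inclusions, I expect to obtain an isomorphism $i^*\mc{G} \cong \ind(\E(\mc{O},\mc{L}))[d]$ for some integer shift $d$, where $i: \mf{g}_n \hookrightarrow \mc{N}_G$.

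Next I would verify that $i^*\mc{G}$ is parity on $\mf{g}_n$ with its stratification by $G_0$-orbits. Every $G_0$-orbit $\mc{O}' \subset \mf{g}_n$ sits in a unique $G$-orbit $C' \subset \mc{N}_G$, and the $*$-stalks of $i^*\mc{G}$ on $\mc{O}'$ coincide with those of $\mc{G}$ on $C'$, so the $*$-parity condition is immediate. For the $!$-stalks, I would invoke Theorem~\ref{th7} on transversal slices to each stratum: the twisted $\mb{C}^\times$-action on such a slice has $\mf{g}_n$-intersection as fixed-point set, and by Assumption~\ref{char} together with Lemma~\ref{lm1.4} the stabilizer orders outside the fixed locus are invertible in $\Bbbk$, so odd-degree vanishing transfers from $\mc{G}$ to $i^*\mc{G}$. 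Alternatively, since $\ind$ commutes with Verdier duality by Theorem~\ref{ver}, the $!$-stalk case can be reduced to the $*$-stalk case by applying the preceding argument to $\mb{D}\mc{G}$.

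The hardest part will be the base change step: one must verify carefully that the $\mb{C}^\times$-fixed locus of $G \times^P (C+\mf{u}_P)$ is exactly $G_0 \times^{P_0} (\mc{O}+\mf{u}_n)$ (a short argument on twisted homogeneous spaces), and keep track of all dimension shifts introduced by restriction to fixed loci. The parity-transfer step should be more routine once the stabilizer hypothesis for Theorem~\ref{th7} has been checked on each stratum, which in turn follows from the pretty-good prime assumption on $\Bbbk$.
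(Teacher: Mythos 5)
Your proposal captures the right ingredients (Conjecture~\ref{2.6(c)}, the twisted $\mb{C}^\times$-action, Theorem~\ref{th7}, Verdier duality via Theorem~\ref{ver}), but it packages them into a sheaf-level isomorphism $i^*\mc{G}\cong\ind(\E(\mc{O},\mc{L}))[d]$ that does not hold, and this is where the argument breaks down. Restricting $\mc{G}=c_!(\dots)$ to $\mf{g}_n$ by base change computes, stalkwise over $y\in\mf{g}_n$, the compactly supported cohomology of the \emph{entire} fiber $Y_y=c^{-1}(y)$, not of its $\mb{C}^\times$-fixed locus; passing from $Y_y$ to $(Y_y)^{\mb{C}^\times}$ is a genuine cohomological step (Theorem~\ref{th7} applied fiberwise) and not a sheaf-level identification. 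Even the description of the fixed locus of the total space is off: the $\mb{C}^\times$-action on $G/P$ by $t\cdot gP=\chi(t)gP$ has fixed points $\{gP : \chi(\mb{C}^\times)\subset gPg^{-1}\}$, which is a \emph{disjoint union} of $G_0$-orbits indexed by $G_0$-conjugacy classes of parabolics $G$-conjugate to $P$ and containing $\chi(\mb{C}^\times)$. Thus $(G\times^P(C+\mf{u}_P))^{\mb{C}^\times}$ is a disjoint union of pieces $G_0\times^{(P^i)_0}(\mc{O}^i+\mf{u}^i_n)$, and $G_0\times^{P_0}(\mc{O}+\mf{u}_n)$ is only one of them.

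The paper's proof avoids both pitfalls by working one fiber at a time. Fix $y\in\mf{g}_n$; parity of $\mc{G}$ gives odd-degree vanishing of $\h^*_c(Y_y,\dots)$; Theorem~\ref{th7}, applied to $Y_y$ with the $\mb{C}^\times$-action $(t,gP)\mapsto\chi(t)gP$, transfers this to odd-degree vanishing of $\h^*_c((Y_y)^{\mb{C}^\times},\dots)$. Then one decomposes $(Y_y)^{\mb{C}^\times}=\sqcup_i Z^i$, identifies each $\h^*_c(Z^i,\dots)$ with a shift of the stalk $\h^*(\ind^{\mf{g}}_{\mf{p}^i}\IC(\mc{O}^i,\mc{L}^i))_y$, and, since these summands are non-negative-dimensional and one of the $\mf{p}^i$ is the original $\mf{p}$, concludes odd-vanishing of the $*$-stalks of $\ind(\IC(\mc{O},\mc{L}))$. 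The $!$-stalks then follow exactly by the Verdier-duality trick you mention (using that $\mc{L}^\lor$ is also cuspidal). So the repair is not a minor base-change check as you anticipate at the end: you need to replace the putative isomorphism with a pointwise argument, incorporate the decomposition of the fixed locus into conjugate-parabolic components, and extract the relevant direct summand from the resulting sum of stalks.
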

\begin{proof}
By corollary \ref{existance for cuspidal}, $\mc{E}(\mc{O},\mc{L})$ exists.
Let $(C,\mc{E})\in \ms{I}(L)^{0-\cu}$ be such that, $C\cap \mf{g}_n=\mc{O}$ and $\mc{E}|_{\mc{O}}=\mc{L}$. 
Let $y\in \mf{g}_n$ and 
$c:G\times^P(C+\mf{u}_P)\to \mc{N}_G$ be the map introduced in the previous subsection. Let $Y_y=c^{-1}(y)$. Then we have an isomorphism $ G/P\times (C+\mf{u}_p)  \to G\times^P (C+\mf{u}_P) $ given by,
\[(gP,x)\mapsto (g,Ad(g^{-1})x).\] It is easy to check that under this isomorphism the map $c$ becomes,
\[(gP,x)\mapsto x,\] and $Y_y=\{gP\in G/P| Ad(g^{-1})y\in \pi_P^{-1}(C)\}$. 
Recall that nilpotent $G$-orbits in $\mf{g}$ are all even dimensional.
Then using the definition of induction from (\ref{27}) and from the base change diagram below, and Conjecture \ref{2.6(c)}, we get, \[\h^a_c(Y_y,(b^*\f^G_P)^{-1}a^*\mc{E}[\dim C]|_{Y_y})=0\text{ for $a$ odd.}\]

$$\begin{tikzcd}
G\times^P(C+\mf{u}_P) \arrow[r, "c"]                   & \mc{N}_G\\
Y_y \arrow[r, "c"] \arrow[u, hook] & y \arrow[u]
\end{tikzcd}$$
We define an action of $\mb{C}^\times$ on $Y_y$ by, $(t,gP)\to \chi(t)gP$. This is well defined as $y\in \mf{g}_n$. Let $(Y_y)^{\mb{C}^\times}$ be the fixed point set.
From Theorem \ref{th7}, 
\begin{equation}\label{29}
\h^a_c((Y_y)^{\mb{C}^\times},(b^*\f^G_P)^{-1}a^*\mc{E}[\dim C] |_{(Y_y)^{\mb{C}^\times}})=0\text{,  for $a$ odd.}
\end{equation}


We will show that $(Y_y)^{\mb{C}^\times}=\sqcup_i{Z^i}$, where $P^i, i\in[1,b] $, is defined to be a set of  representatives of $G_0$-orbits of parabolic subgroups in $G$ conjugate to $P$ containing $\chi(\mb{C}^\times)$. Let $L^i$ and $U_{P^i}$ be the Levi and the unipotent radical of $P^i$ respectively. An element of $G$ conjugates $P$ to $P^i$, conjugating $C$ by the same element gives $C^i$ contained in $\mf{l}^i$. Let 
\[Z^i=\{g(P^i)_0\in G_0/(P^i)_0| Ad(g^{-1})y\in (\pi^i)^{-1}\mathcal(C^i)\},\]
 where $\pi^i: \mf{p}^i\to \mf{l}^i$ and $(P^i)_0=P^i\cap G_0$. We want to identify $g(P^i)_0\in Z^i$ with $gg'P$ in $(Y_y)^{\mb{C}^\times}$, where $g'\in G$ is fixed and $g'Pg'^{-1}=P^i$.  Note $g(P^i)_0\in Z^i$, so $Ad(g^{-1})y\in (\pi^i)^{-1}({C}^i)$, hence 
 \[
 Ad(gg')^{-1}y=Ad(g')^{-1}Ad(g^{-1})y\in Ad(g'^{-1}) (\pi^i)^{-1}({C}^i),\] 
which is by definition $\pi_P^{-1}({C})$. Also, \[
 (gg')^{-1}\chi(t)gg'=g'^{-1}g^{-1}\chi(t)gg'=g'^{-1}\chi(t)g',\] and $\chi(t)\in P^i$. Therefore by definition of $g'$, $g'^{-1}\chi(t)g'$ belongs to $ P$. By definition, 
 \[(Y_y)^{\mb{C}^\times}=\{gP\in G/P| Ad(g^{-1})y\in \pi_P^{-1}{C},g^{-1}\chi(t)g\in P\}.\] 
 Hence $gg'P$ is in $(Y_y)^{\mb{C}^\times}$.
Conversely, if $hP\in (Y_y)^{\mb{C}^\times}$, then $h^{-1}\chi(t)h\in P$ We can define $P^i=hPh^{-1}$ and $g'=h, g=e$, then by definition $gg'=h$ and $y=Ad(h)Ad(h^{-1})y\in Ad(h)\pi^{-1}{C}=(\pi^i)^{-1}{C}^i$, hence $e(P^i)_0\in Z^i$ by identifying this with $hP\in (Y_y)^{\mb{C}^\times}$. In the definition of $Z^i$, the condition $Ad(g^{-1})y\in (\pi^i)^{-1}({C}^i)$ can be redefined as below.

  If $y\in \mf{g}_n$ and $g\in G_0$, then this implies $Ad(g^{-1})y \in \mf{g}_n$. Hence we can restate the condition $Ad(g^{-1})y\in (\pi^i)^{-1}(C^i)$
 as, 
 $$Z^i=\{g(P^i)_0\in G_0/(P^i)_0 | Ad(g^{-1})y\in \mf{p}^i_n, \pi^i(Ad(g^{-1})y)\in \mc{O}^i\},$$ where $\mc{O}^i=C^i\cap \mf{g}_n$. 
In the redefined induction diagram above, if we use the isomorphism
\[G_0/P_0 \times (\mc{O}+\mf{u}_n) \xrightarrow {\cong} G_0\times^{P_0} (\mc{O}+\mf{u}_n)\]

we can see $Z^i={(p'_3)^{i}}^{-1}(y)$, where  $(p'_3)^{i}$ is the map associated to $(P^i,L^i)$ similar to how we defined $p'_3$.  Hence from base change and the diagram below,
\begin{equation}
\begin{cases}
	\h^a_c(Z^i,(b^*\f^G_P)^{-1}a^*\mc{E}[\dim C] |_{Z^i}  )&=\h^a({p'_3}_!({p'_2}^*\f^{G_0}_{P_0})^{-1}{p'_1}^*(i_!\mc{E}[\dim C])|_\mc{O})_y\\
	&=\h^a(\inn^\mf{g}_{\mf{p}^i}(\mc{IC}(\mathcal{O},\mathcal{L})[\dim C-\dim \mc{O}])_y
	\end{cases}
\end{equation} 

\begin{tikzcd}
                             &                                            &                                                                           & Z^i={(p'_3)^i}^{-1}\{y\} \arrow[d, hook] \arrow[r]      & \{y\} \arrow[d, hook]               \\
                             & \mathcal{O}^i\arrow[d, hook] & \mc{O}^i+\mf{u}^i_n \arrow[l, "{(p'_1)^i}"'] \arrow[d, ] \arrow[r, "{(p'_2)^i}"]            & G_0\times^{(P^i)_0}(\mc{O}^i+\mf{u}^i_n) \arrow[d, ] \arrow[r, "{(p'_3)^i}"] & \mf{g}_n\cap\mc{N}_G \arrow[d, hook] \\
{C}^i \arrow[r, "i"] & \mf{l}                                          & C^i+\mf{u}_{P^i} \arrow[l, "a'"'] \arrow[r, "b^i"] \arrow[ll, "a^i" description, bend left] & G\times^{P^i}(C^i+\mf{u}_{P^i}) \arrow[r, "c^i"]                           & \mc{N}_G                        
\end{tikzcd}

we have, 
\begin{align*}
\h^a_c((Y_y)^{\mb{C}^\times},(b^*\f^G_P)^{-1}a^*\mc{E}[\dim C] |_{(Y_y)^{\mb{C}^\times}})&=\bigoplus_i {\h^a_c(Z^i,(b^*\f^G_P)^{-1}a^*\mc{E}[\dim C] |_{Z^i})}\\&=\bigoplus_i{\h^a(\inn^\mf{g}_{\mf{p}^i}(\mc{IC}(\mathcal{O},\mathcal{L})[\dim C-\dim \mc{O}])_y}.
\end{align*}
So we finally get,
\begin{align*}
	\h^a_c((Y_y)^{\mb{C}^\times},(b^*\f^G_P)^{-1}a^*\mc{E} |_{(Y_y)^{\mb{C}^\times}}[\dim C])= \bigoplus_i{\h^{a+\dim C-\dim \mc{O}}(\inn^\mf{g}_{\mf{p}^i}(\mc{IC}(\mathcal{O},\mathcal{L})))_y}.
\end{align*}

In
the last sum one of these $\mf{p}^i$ is our original $\mf{p}$. Hence from (\ref{29}), we can conclude that
\begin{equation}\label{*'}
 \h^{a+\dim C-\dim \mc{O}}(\ind(\mc{IC}(\mathcal{O},\mathcal{L})))_y =0\text{, for $a$ odd.}
\end{equation}
 If $\dim C-\dim \mc{O}$ is odd then $\inn^{\mf{g}}_{\mf{p}}\mc{IC}(\mc{O},\mc{L})$ is $*$-odd and if  $\dim C-\dim \mc{O}$ is even then $\inn^{\mf{g}}_{\mf{p}}\mc{IC}(\mc{O},\mc{L})$ is $*$-even.  As $\mc{IC}(\mc{O},\mc{L}^\lor)$ is also cuspidal, so 
\begin{equation}\label{**'}
\h^{a+\dim C-\dim \mc{O}}(\ind(\mc{IC}(\mathcal{O},\mathcal{L}^\lor)))_y =0\text{, for $a$ odd.}
\end{equation}
 \begin{align*}\text{But, }
\ind(\mc{IC}(\mathcal{O},\mathcal{L}^\lor))&= \ind(\mb{D}\mc{IC}(\mathcal{O},\mathcal{L}))\\ &= \mb{D}\ind(\mc{IC}(\mathcal{O},\mathcal{L}))\text{ (by Theorem \ref{ver}).}
\end{align*}
Therefore,

\begin{align*}
\h^{a+\dim C-\dim{ \mc{O}}}(j^!\ind(\mc{IC}(\mathcal{O},\mathcal{L})) )&= \h^{a+\dim C-\dim \mc{O}}(j^!\ind(\mb{D}\mc{IC}(\mathcal{O},\mathcal{L}^\lor)) )\\ &=  \h^{a+\dim C-\dim \mc{O}}(j^!\mb{D}\ind(\mc{IC}(\mathcal{O},\mathcal{L}^\lor)) )\\&= \h^{a+\dim C-\dim \mc{O}}(\ind(\mc{IC}(\mathcal{O},\mathcal{L}^\lor)))_y, 
\end{align*}

where $j:\{y\} \hookrightarrow \mf{g}_n\cap \mc{N}_G$. So by (\ref{**'}),  \[\h^{a+\dim C-\dim \mc{O}}(j^!\ind(\mc{IC}(\mathcal{O},\mathcal{L}^\lor)))=0\text{, for $a$ odd.}\]
  Hence by the above fact,  if $\dim C-\dim \mc{O}$ is odd then $\inn^{\mf{g}}_{\mf{p}}\mc{IC}(\mc{O},\mc{L})$ is $!$-odd and if  $\dim C-\dim \mc{O}$ is even then $\inn^{\mf{g}}_{\mf{p}}\mc{IC}(\mc{O},\mc{L})$ is $!$-even, finally we can say $\ind\mc{IC}(\mc{O},\mc{L})$ satisfies the parity condition.
\end{proof}

 \section{Existence of  parity sheaves}\label{sec6}
 
 \subsection{$n$-rigidity}\label{n-rigidity}
Let $n\in \mb{Z}$ be fixed. Recall the cocharacter map $\chi: \mb{C}^\times \to G$. Let $\phi:\mf{sl}_2\to \mf{g}$ and $\tilde\phi:SL_2\to G$ be such that $d\tilde\phi=\phi$. Define 
$\chi': \mb{C}^\times\to G$ by,
\[\chi'(t)=\tilde\phi\begin{pmatrix}
t&0\\0& t^{-1}\\	
\end{pmatrix}.
\]Now we define for $m\in \mb{Z}$, \[_m\mf{g}=\{x\in \mf{g}| Ad\chi'(t)x=t^m x\}. \]
Hence $\mf{g}={\bigoplus_{m\in \mb{Z}}} _m\mf{g}$.  
\begin{definition}
$(G,\chi)$ is said to be $n$-rigid if there exists $\phi$ such that
\begin{enumerate}
\item
$\phi\in J_n$, which is defined in section \ref{sec2}.
\item $_m\mf{g}=\mf{g}_{nm/2}$ for $m\in\mb{Z}$ and $nm/2\in \mb{Z}$,
\item $_m\mf{g}=0$ for $m\in \mb{Z}$ and $nm/2\notin \mb{Z}$.
	
\end{enumerate}
	
\end{definition}
\begin{proposition}\label{nrigid}
	If $(G,\chi)$ is $n$-rigid and $\phi(e)=x$, then
	\begin{enumerate}
		\item $x$ is in the unique open $G_0$-orbit in $\mf{g}_n$,
		\item the map $G_0^x/(G_0^x)^\circ\to G^x/(G^x)^\circ$ is an   isomorphism.
	\end{enumerate}
\end{proposition}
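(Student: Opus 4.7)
The plan is to exploit the basic identifications that $n$-rigidity provides: taking $m=0$ and $m=2$ in condition (2) gives $_0\mathfrak{g}=\mathfrak{g}_0$ and $_2\mathfrak{g}=\mathfrak{g}_n$, so when one views $\mathfrak{g}$ as an $\mathfrak{sl}_2$-module via $\phi$, the $\chi$-weight spaces $\mathfrak{g}_0$ and $\mathfrak{g}_n$ coincide with the $\chi'$-weight spaces of weights $0$ and $2$.

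For (1), I would first observe that $\mathfrak{g}_n$ is irreducible as a variety and has only finitely many $G_0$-orbits by Lemma \ref{finite}, so any open orbit is automatically dense and unique; it therefore suffices to verify $\dim G_0-\dim G_0^x=\dim \mathfrak{g}_n$. Since $\lie(G_0^x)=\ker(\mathrm{ad}\,x\colon \mathfrak{g}_0\to\mathfrak{g}_n)$, the rigidity identifications turn this into the $\mathfrak{sl}_2$-module map ${_0\mathfrak{g}}\to {_2\mathfrak{g}}$ given by the action of $e$. Standard $\mathfrak{sl}_2$-representation theory shows that on any irreducible summand $V_m$, the operator $e$ maps weight $k$ onto weight $k+2$ whenever $k\ge 0$; hence $\mathrm{ad}\,x\colon \mathfrak{g}_0\to\mathfrak{g}_n$ is surjective and the dimension count yields openness.

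For (2), the first task is to show $G_0=Z_G(\chi'(\mathbb{C}^\times))$. Since $\phi(h)\in\mathfrak{g}_0$, the one-parameter subgroup $\chi'$ lies in $G_0$; both $G_0$ and $Z_G(\chi'(\mathbb{C}^\times))$ are connected (as centralizers of one-dimensional tori in a connected reductive group), and their Lie algebras agree by the identification ${_0\mathfrak{g}}=\mathfrak{g}_0$, so they coincide. I would then invoke the Jacobson--Morozov/Kostant Levi decomposition $G^x=G^{x,h,y}\ltimes R_u(G^x)$, with $R_u(G^x)$ connected unipotent, and identify the reductive Levi $G^{x,h,y}$ as the fixed-point set $(G^x)^{\chi'(\mathbb{C}^\times)}=G^x\cap G_0=G_0^x$. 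Since $R_u(G^x)$ is connected, $(G^x)^\circ=(G_0^x)^\circ\cdot R_u(G^x)$, and the natural map $G_0^x/(G_0^x)^\circ\to G^x/(G^x)^\circ$ is an isomorphism.

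The main subtlety I expect lies in passing from the linear equality ${_0\mathfrak{g}}=\mathfrak{g}_0$ to the group-theoretic equality $G_0=Z_G(\chi'(\mathbb{C}^\times))$ (ultimately bookkeeping with connectedness of torus centralizers) and in quoting the correct form of the Jacobson--Morozov Levi decomposition of $G^x$, together with connectedness of its unipotent radical. The remainder is standard $\mathfrak{sl}_2$-module arithmetic.
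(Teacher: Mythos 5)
Your argument is correct and supplies exactly what the paper outsources to \cite{Lu}. For part (1), the identifications $_0\mathfrak{g}=\mathfrak{g}_0$ and $_2\mathfrak{g}=\mathfrak{g}_n$ coming from $n$-rigidity (at $m=0,2$) do turn $\operatorname{ad}x\colon\mathfrak{g}_0\to\mathfrak{g}_n$ into the raising operator from $\chi'$-weight $0$ to $\chi'$-weight $2$, and the $\mathfrak{sl}_2$-theory surjectivity for $k\ge0$ is the right fact; the dimension count $\dim G_0-\dim G_0^x=\dim\operatorname{Im}(\operatorname{ad}x|_{\mathfrak{g}_0})=\dim\mathfrak{g}_n$ then gives openness, with uniqueness coming from irreducibility of the vector space $\mathfrak{g}_n$ (and Lemma \ref{finite}, though two dense open orbits already must coincide). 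For part (2), the chain of identifications is sound: $\chi'$ lands in $G_0$ because $\phi(h)\in\mathfrak{g}_0$ and we are in characteristic $0$; $G_0$ and $Z_G(\chi'(\mathbb{C}^\times))$ are both connected centralizers of tori with Lie algebra $\mathfrak{g}_0={_0\mathfrak{g}}$, hence equal; $Z_G(\chi'(\mathbb{C}^\times))=G^{\phi(h)}$ since cocharacters are determined by their derivative; $G^{x,\phi(h)}=G^{x,\phi(h),\phi(f)}$ by uniqueness of the Jacobson--Morozov completion; and the Kostant Levi decomposition $G^x=G^{x,\phi(h),\phi(f)}\ltimes R_u(G^x)$ with $R_u(G^x)$ connected then forces $(G^x)^\circ=(G_0^x)^\circ\cdot R_u(G^x)$ and the bijection on component groups.

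This is essentially the route taken in the cited references (Lusztig, Prop.~4.2 and~5.8): surjectivity of $\operatorname{ad}x$ in weight $0\to2$ for openness, and the reductive Levi of $G^x$ realized as the fixed points of the $\mathfrak{sl}_2$-torus for the component-group isomorphism. Nothing is missing; the only spots where you correctly flagged care is needed — connectedness of torus centralizers, equality of connected subgroups with equal Lie algebras in characteristic $0$, and connectedness of the unipotent radical — are exactly the points that make the argument go through, and you handled them.
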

The proof of this proposition is given in \cite[prop ~4.2,5.8]{Lu}.
For the proof of Theorem \ref{th1.6} $n$-rigidity  plays a role. 

 \subsection{Construction of parabolic, nilpotent and Levi subgroups}\label{subsec6.1}
 In this section we  first fix $x \in \mf{g}_n$ and then we  construct $\mf{p,n,l}$ associated to $x$. From Theorem \ref{th55}, recall $\phi$ and the construction of $\chi'$. Recall $\chi$ commutes with $\chi'$ 
  and, 
  \[_m\mf{g}=\{g\in \mf{g}| Ad(\chi'(t))g=t^m g\}.\]
  Now we have the direct sum decomposition,
  \[
  \mf{g}={\bigoplus_{m,m'\in \mb{Z}}}_{m}\mf{g}_{m'}
  .\] 
 Here $m,m' \in \mathbb{Z}$ and $_{m}\mf{g}_{m'}=_{m}\mf{g} \cap \mf{g}_{m'}$. We define,
\[
\mf{p}=\bigoplus_{m',m, 2m'/n\leq m}(_m\mf{g}_{m'}),   \mf{n}=\bigoplus_{m',m, 2m'/n<m}(_m\mf{g}_{m'}), 
\mf{l}= \bigoplus_{m',m, 2m'/n=m}(_m\mf{g}_{m'}).
 \]
Here, $\mf{p,n,l}$ are parabolic, nilradical and Levi subalgebra of $\mf{g}$  \cite[~ 5]{Lu}.  We give one example  in \ref{subsec8.1}, how to construct $\mf{p,n,l}$ as defined here.
 \begin{theorem}\label{th5.1}
 	With  the set-up above, $\phi(\mf{sl}_2)\subset \mf{l}$ and $(L,\chi)$ is $n$-rigid. Also $x$ is in the open $L_0$-orbit in $\mf{l}_n$.
 \end{theorem}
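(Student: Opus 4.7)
The plan is to verify the three assertions---$\phi(\mf{sl}_2)\subset \mf{l}$, the $n$-rigidity of $(L,\chi)$, and that $x$ lies in the open $L_0$-orbit in $\mf{l}_n$---in sequence, essentially by unpacking the bi-graded decomposition of $\mf{g}$ and then invoking Proposition \ref{nrigid} at the last step.

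First I would pin down the simultaneous weights of $\phi(e),\phi(h),\phi(f)$ under the commuting cocharacters $\chi$ and $\chi'$. Since $\chi'(t)=\tilde\phi\bigl(\operatorname{diag}(t,t^{-1})\bigr)$, the adjoint action of $\chi'(t)$ scales $\phi(e),\phi(h),\phi(f)$ by $t^{2},1,t^{-2}$ respectively, from the standard $\operatorname{diag}(t,t^{-1})$-weights of $e,h,f$ in $\mf{sl}_2$. Combined with the hypothesis $\phi\in J_n$ this gives
\[
\phi(e)\in {}_2\mf{g}_n,\qquad \phi(h)\in {}_0\mf{g}_0,\qquad \phi(f)\in {}_{-2}\mf{g}_{-n}.
\]
Each of the bi-indices $(m,m')=(2,n),(0,0),(-2,-n)$ satisfies $2m'/n=m$, so all three elements lie in the summand defining $\mf{l}$, and hence $\phi(\mf{sl}_2)\subset\mf{l}$.

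Next I would verify $n$-rigidity of $(L,\chi)$ using this same $\phi$, now viewed as a map $\mf{sl}_2\to\mf{l}$. Condition (1) of the definition ($\phi\in J_n$ for $L$) is immediate from the first step, since $\phi(e)\in\mf{l}\cap\mf{g}_n=\mf{l}_n$ and analogously for $h,f$. For conditions (2) and (3) I would read off both the $\chi'$- and $\chi$-gradings of $\mf{l}$ directly from the definition $\mf{l}=\bigoplus_{2m'/n=m}{}_m\mf{g}_{m'}$: the $\chi'$-weight-$m$ part of $\mf{l}$ picks out the unique summand with $(m,m')$ satisfying $m'=nm/2$, which exists only when $nm/2\in\mb{Z}$, in which case it equals ${}_m\mf{g}_{nm/2}$; and this is also precisely $\mf{l}\cap\mf{g}_{nm/2}=\mf{l}_{nm/2}$. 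Thus ${}_m\mf{l}=\mf{l}_{nm/2}$ when $nm/2\in\mb{Z}$ and vanishes otherwise.

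Finally, with $(L,\chi)$ confirmed to be $n$-rigid via $\phi$, Proposition \ref{nrigid} applied with $L$ in place of $G$ immediately yields that $x=\phi(e)$ lies in the unique open $L_0$-orbit in $\mf{l}_n$. There is no serious obstacle here: the statement is essentially bookkeeping in the bi-graded decomposition, and the only subtlety worth tracking carefully is when the rational number $2m'/n$ is an integer. One point that should be mentioned in passing is that $\chi(\mb{C}^\times)\subset L$ and $\chi'(\mb{C}^\times)\subset L$, so that both cocharacters genuinely act on $\mf{l}$; the former follows because $\chi$ commutes with $\chi'$ and therefore preserves every bi-weight space ${}_m\mf{g}_{m'}$ and a fortiori $\mf{l}$, and the latter from $\phi(\mf{sl}_2)\subset\mf{l}$ combined with $\chi'=\tilde\phi\circ(\operatorname{diag}(t,t^{-1}))$.
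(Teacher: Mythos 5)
Your proof is correct, and since the paper states this theorem without a written proof (the construction is referenced to Lusztig, [Lu, \S 5]), you have effectively supplied the verification that is left implicit. Your approach is the natural (and essentially forced) one: reading weights off the bi-grading $\mf{g}=\bigoplus{}_m\mf{g}_{m'}$. The three weight computations $\phi(e)\in{}_2\mf{g}_n$, $\phi(h)\in{}_0\mf{g}_0$, $\phi(f)\in{}_{-2}\mf{g}_{-n}$ are right (using $\operatorname{Ad}(\tilde\phi(g))\circ\phi=\phi\circ\operatorname{Ad}(g)$), each bi-index satisfies $2m'/n=m$, and the identification ${}_m\mf{l}={}_m\mf{g}_{nm/2}=\mf{l}_{nm/2}$ (when $nm/2\in\mb{Z}$, else $0$) is exactly what the definition of $\mf{l}$ gives. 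The remark about $\chi(\mb{C}^\times)$ and $\chi'(\mb{C}^\times)$ both landing in $L$ is a point worth making explicit, and you handle it correctly; it also implicitly uses that $\chi$ and $\chi'$ commute (which the paper establishes in the proof of Theorem \ref{th55} via $\phi(h)\in\mf{g}_0$). The final step invoking Proposition \ref{nrigid} for $L$ is exactly as intended, since the $\phi$ witnessing $n$-rigidity has $\phi(e)=x$.
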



 \subsection{Existence of parity sheaves}\label{subsec6.2}
  Let $(\mathcal{O},\mathcal{L})\in \mathscr{I}(\mf{g}_ n)$. Let $x\in \mathcal{O}$ and $\mf{p,n,l}$ be Lie subalgebras of $\mf{g}$ constructed as above connected with $x$. Let $P,U,L$ be the subgroups of $G$ with Lie algebras $\mf{p,n,l}$ respectively. It follows that P contains the image $\chi(\mb{C}^\times)$. 
  By Theorem \ref{th5.1}, $x$ is contained in an open $L_0$-orbit in $\mf{l}_n$, call it $\mc{O}_L$.
 As $x\in \mc{O}_L$,  $\mc{O}_L\subset \mc{O}$. 
   Now we can restrict $\mathcal{L}$ to $\mc{O}_L$. 
   By \cite[ prop ~5.8]{Lu}, the inclusion induces isomorphisms on $G_0^x/(G_0^x)^\circ$ and $L_0^x/({L_0^x})^\circ$. 
  Hence, \[\loc_{,G_0}(\mc{O},\Bbbk)=\loc_{,L_0}(\mc{O}_L,\Bbbk).\] So $\mathcal{L}|_{\mc{O}_L}$ is a local system on $\mf{l}_n$, let's call it $\mc{L}'$.  
  Now we are ready to prove the parity vanishing theorem  mentioned in section \ref{sec2}.
  \begin{theorem}\label{th6.2}
  Let $\mc{O}$ be a nilpotent orbit in $\mf{g}_n$ and $\mc{L}\in \loc_{,G_0}(\mc{O},\Bbbk)$, then $\h^*_{G_0}(\mc{L})$ vanishes in odd degrees.	
  \end{theorem}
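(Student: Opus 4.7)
The plan is to use the construction in Subsections \ref{subsec6.1}--\ref{subsec6.2} to reduce this $G_0$-equivariant statement on $\mc{O}\subset \mf{g}_n$ to the analogous parity vanishing for an $L$-equivariant local system on a full nilpotent $L$-orbit, where Theorem \ref{th3} applies directly.

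First I would pick $x\in \mc{O}$ together with $\phi\in J_n$ satisfying $\phi(e)=x$, and build $\mf{p},\mf{l},P,L$ as in Subsection \ref{subsec6.2}. By Theorem \ref{th5.1}, $(L,\chi)$ is then $n$-rigid and $x$ lies in the open $L_0$-orbit $\mc{O}_L\subset \mf{l}_n$. Writing $\mc{L}'=\mc{L}|_{\mc{O}_L}$, the identification $\loc_{,G_0}(\mc{O},\Bbbk)=\loc_{,L_0}(\mc{O}_L,\Bbbk)$ recorded at the end of Subsection \ref{subsec6.2} shows that $\mc{L}$ and $\mc{L}'$ determine each other.

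Next, I would rewrite both equivariant cohomologies as cohomology of classifying spaces of stabilizers, $\h^*_{G_0}(\mc{L})=\h^*(BG_0^x,\mc{L}_x)$ and $\h^*_{L_0}(\mc{L}')=\h^*(BL_0^x,\mc{L}_x)$, and show that the two inclusions $L_0^x\hookrightarrow G_0^x$ and $L_0^x\hookrightarrow L^x$ each have connected unipotent quotients, so both induce isomorphisms on cohomology with $\Bbbk$-coefficients. For the first, the bigrading $\mf{g}=\bigoplus {}_m\mf{g}_{m'}$ used to define $\mf{p}$ forces $G_0^x\subset P_0$ and yields the Levi decomposition $G_0^x=L_0^x\ltimes (G_0^x\cap U_0)$. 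For the second, Proposition \ref{nrigid} applied to the $n$-rigid pair $(L,\chi)$ provides the isomorphism of component groups, and a dimension count using the $\mathfrak{sl}_2$-weight decomposition of $\mf{l}$ under $\phi(h)$ upgrades this to an equality of identity components $(L_0^x)^{\circ}=(L^x)^{\circ}$.

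These identifications produce an isomorphism $\h^*_{G_0}(\mc{L})\cong \h^*_L(C_L,\widetilde{\mc{L}})$, where $C_L=L\cdot x\subset \mc{N}_L$ and $\widetilde{\mc{L}}$ is the unique $L$-equivariant extension of $\mc{L}'$. Since pretty-goodness of $l$ for $G$ passes to the Levi $L$ (the root system of $L$ being a subsystem of that of $G$), Theorem \ref{th3} applied to the triple $(L,C_L,\widetilde{\mc{L}})$ delivers the desired odd-degree vanishing. The main obstacle will be establishing that the stabilizer inclusions $L_0^x\hookrightarrow G_0^x$ and $L_0^x\hookrightarrow L^x$ have connected unipotent cokernel; both rest on the fine structure of $\mf{p},\mf{n},\mf{l}$ coming from the bigrading of $\mf{g}$ by $\chi$ and $\chi'$ and on $n$-rigidity of $(L,\chi)$, and is where the bulk of the technical work lies.
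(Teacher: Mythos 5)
Your proposal is correct in outline but takes a genuinely different route from the paper. The paper's proof works directly with the component group $A_{G_0}(x)$: it first uses $n$-rigidity (via Theorem~\ref{th5.1} and Proposition~\ref{nrigid}) together with the chain $A_{G_0}(x)\cong A_{L_0}(x)\cong A_L(x)$ and Remark~\ref{rk2.4.2} to conclude $|A_{G_0}(x)|$ is invertible in $\Bbbk$, then passes through the Galois covering $\tilde{\mc{O}}=G_0/(G_0^x)^\circ$ to reduce to the constant sheaf, computes $\h^*_{G_0}(\mc{O},\pi_*\underline{\Bbbk})\cong \h^*_{(G_0^x)^\circ\text{-red}}(pt)$, and finally invokes \cite[Theorem~2.44]{parity} after a somewhat delicate argument (via the semidirect product $\mb{C}^\times\ltimes G^x$) that $(G_0^x)^\circ\text{-red}$ is a regular subgroup of $(G^x)^\circ\text{-red}$ and hence $l$-torsion-free. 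Your route instead pushes the entire computation onto the Levi $L$ and reuses the already-established Theorem~\ref{th3}, which packages the torsion-prime analysis once and for all in the ungraded setting; the structural facts about the $\chi,\chi'$-bigrading ($G_0^x\subset P_0$, hence $G_0^x=L_0^x\ltimes(G_0^x\cap U_0)$, from \cite[\S5]{Lu}) and $n$-rigidity then do the remaining work. This is more conceptual and avoids the regular-subgroup argument, at the cost of having to verify carefully that pretty-goodness passes to $L$ (which it does, since $\bf{X}_L=\bf{X}$, $\bf{Y}_L=\bf{Y}$ and $\Phi_L\subset\Phi$) and that the stalk representation transports compatibly along the identifications of component groups. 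One misstatement worth flagging: you cannot expect the literal equality $(L_0^x)^\circ=(L^x)^\circ$. By $n$-rigidity, $\mf{l}_0^x={}_0(\mf{l}^x)$ is the reductive Levi factor of $\mf{l}^x$, and the positive $\phi(h)$-weight part is a generally nontrivial nilradical, so what holds is the Levi decomposition $L^x=L_0^x\ltimes U'$ with $U'$ unipotent (connected); you correctly state the goal as "connected unipotent cokernel" elsewhere in the proposal, and that weaker statement is exactly what is needed to make $\h^*_{L_0^x}\cong\h^*_{L^x}$, so the plan survives this correction.
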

\begin{proof}
	Define $\tilde{\mc{O}}:=G_0/(G^x_0)^\circ$ and $\pi:\tilde{\mc{O}}\to \mc{O}$ by $g(G^x_0)^\circ \to g.x$. This is a Galois covering map with the Galois group $A_{G_0}(x)$. Also we know that
	\[\loc_{, G_0}(\mc{O},\Bbbk)\cong \Bbbk[A_{G_0}(x)]-\text{mod}.\]We can construct the Levi subgroup $L$ as defined above. Then by Theorem \ref{th5.1}, $(L,\chi)$ is $n$-rigid. Hence by Proposition \ref{nrigid}, $L^x/(L^x)^\circ \cong L^x_0/(L^x_0)^\circ$. But in the above paragraph we mentioned $G^x_0/(G^x_0)^\circ \cong L^x_0/(L^x_0)^\circ$.
 By  Remark \ref{rk2.4.2}, $|A_L(x)|$ is invertible in $\Bbbk$. Hence by the above isomorphisms, $|A_{G_0}(x)|$ is invertible in $\Bbbk$. Therefore  any $\Bbbk[A_{G_0}(x)]$-module is a summand of the direct sum of copies of the regular representation, which again corresponds to $\pi_*\Bbbk_{\tilde{\mc{O}}}$.
\begin{align*}
\h^*_{G_0}(\mc{O},\pi_*\Bbbk_{\tilde{\mc{O}}})&\cong \h^*_{G_0}(\tilde{\mc{O}})\\ &\cong \h^*_{(G^x_0)^\circ}(pt)\text{  (by quotient equivalence)} \\ &\cong \h^*_{(G^x_0)^\circ-{red}}(pt).
\end{align*}
Here $(G^x_0)^\circ-red$ is the reductive quotient of $(G^x_0)^\circ$. 	By Lemma \ref{lm1.4} and assumption \ref{char}, $l$ is not a torsion prime for $(G^x)^\circ-red$. By \cite[Theorem ~2.44]{parity},  $\h^*_{(G^x_0)^\circ-{red}}(pt)$ will vanish in odd degrees if we can show that $l$ is not a torsion prime for $(G^x_0)^\circ-red$. This  will follow from  showing $(G^x_0)^\circ-{red}$ is a regular subgroup of $(G^x)^\circ-{red}$. 
Define a map $\psi: \mb{C}^\times \mapsto \mb{C}^\times \times G$ by,
\[t \mapsto (t^n,\chi(t)).\]Then $\mb{C}^\times\times G_0$ is the centralizer of $\psi(\mb{C}^\times)$. So $(\mb{C}^\times \times G_0)^x=(\mb{C}^\times \times G)^x\cap C_{\psi(\mb{C}^\times)}$, $C_{\psi(\mb{C}^\times)}$ is the centralizer of $\psi(\mb{C}^\times)$.  Therefore any maximal torus in $(\mb{C}^\times \times G)^x$ containing $\psi(\mb{C}^\times)$ commutes with $\psi(\mb{C}^\times)$, hence in $(\mb{C}^\times \times G_0)^x$. So   $(\mb{C}^\times \times G_0)^x$ is regular subgroup of   $(\mb{C}^\times \times G)^x$. Let us define $\mb{C}^\times \ltimes G^x$ by the action of $\mb{C}^\times$ on $G^x$ as $(t,g)\to \chi(t)g\chi(t^{-1})$. Now we define a map  $(\mb{C}^\times \times G)^x \to \mb{C}^\times \ltimes G^x$ by,
\[(t,g)\to [(t,\chi(t)g\chi(t^{-1})].\] It is easy to check this is an isomorphism and image of $\psi(\mb{C}^\times)$ under this isomorphism is contained in $1\ltimes G^x\cong G^x$. Similarly we have another isomorphism $ (\mb{C}^\times \times G_0)^x \cong \mb{C}^\times \ltimes G_0^x$. Therefore from the previous deduction we can say any maximal torus in   $G^x\subset 1 \ltimes G^x \subset \mb{C}^\times \ltimes G^x$ containing $\psi(\mb{C}^\times)$ that commute with $\psi(\mb{C}^\times)$ will be contained in $\mb{C}^\times \ltimes G_0^x$, so in $1\ltimes G_0^x\cong G_0^x$. Now we can conclude from the previous deduction that $G_0^x$ is regular subgroup of $G^x$. Hence $(G_0^x)^\circ-red$ is regular subgroup of $(G^x)^\circ-red$ and we are done.
\end{proof}


 \begin{theorem}\label{th8}
 	Let $(\mathcal{O},\mathcal{L})\in \mathscr{I}(\mf{g}_n)$ and  $\mf{l}_n$ and $\mathcal{L}'$ constructed above. Assume that $\mc{E}(\mc{O}_L,\mc{L}')$ exists, then \item
 	(a) The support of $\ind\mc{E}(\mc{O}_L,\mc{L}')$ is $\bar{\mathcal{O}}$, and
 	\item
 	(b)     $\ind\mc{E}(\mc{O}_L,\mc{L}')|_{ \mathcal{O}}=\mathcal{L}[\dim \mc{O}_L]$.
 \end{theorem}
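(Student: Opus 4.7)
The plan is to analyze the induction diagram $\mf{l}_n\xleftarrow{\pi}\mf{p}_n\xrightarrow{e}G_0\times^{P_0}\mf{p}_n\xrightarrow{\mu}\mf{g}_n$ directly, using that by Theorem \ref{th5.1} the orbit $\mc{O}_L$ is the open (hence dense) $L_0$-orbit in the irreducible variety $\mf{l}_n$, so that $\bar{\mc{O}}_L=\mf{l}_n$ and $\mc{E}(\mc{O}_L,\mc{L}')|_{\mc{O}_L}=\mc{L}'[\dim\mc{O}_L]$, extended as a parity sheaf over all of $\mf{l}_n$.

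For part (a), I would first observe that $\mu$ is proper: after the natural isomorphism $G_0\times^{P_0}\mf{g}_n\cong G_0/P_0\times\mf{g}_n$ given by $[g,z]\mapsto(gP_0,\operatorname{Ad}(g)z)$, the map $\mu$ becomes the restriction of the second projection to the closed subvariety $G_0\times^{P_0}\mf{p}_n$, which is proper since $G_0/P_0$ is. Hence $\mu(G_0\times^{P_0}\mf{p}_n)=G_0\cdot\mf{p}_n$ is closed in $\mf{g}_n$, and since $x\in\mc{O}_L\subset\mf{l}_n\subset\mf{p}_n$, we have $\bar{\mc{O}}\subset G_0\cdot\mf{p}_n$. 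For the reverse inclusion I would invoke Lusztig's dimension computation \cite[\S5]{Lu} yielding $\dim(G_0\cdot\mf{p}_n)=\dim\mc{O}$; together with irreducibility of $G_0\cdot\mf{p}_n$, this forces $G_0\cdot\mf{p}_n=\bar{\mc{O}}$, and so the support of $\ind\mc{E}(\mc{O}_L,\mc{L}')$ lies in $\bar{\mc{O}}$. Equality of supports then follows once part (b) exhibits non-vanishing over $\mc{O}$.

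For part (b), the geometric heart of the matter is that $\mu$ restricts to an isomorphism over $\mc{O}$. Writing $\mu^{-1}(x)=\{gP_0\in G_0/P_0:\operatorname{Ad}(g^{-1})x\in\mf{p}_n\}$, this reduces to showing $G_0^x\subset P_0$. The identity component is handled by Proposition \ref{nrigid} applied to $(L,\chi)$ (which is $n$-rigid by Theorem \ref{th5.1}): the identification $L^x/(L^x)^\circ\cong L_0^x/(L_0^x)^\circ$ combined with $L_0\subset P_0$ forces $(G_0^x)^\circ\subset P_0$, and the component-group identification $G_0^x/(G_0^x)^\circ\cong L_0^x/(L_0^x)^\circ$ recalled in Section \ref{subsec6.2} extends this to the full $G_0^x$. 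By $G_0$-equivariance, $\mu^{-1}(y)=\{gP_0\}$ is a single point for every $y\in\mc{O}$, and a dimension match makes $\mu$ restricted to $\mu^{-1}(\mc{O})$ a bijective morphism between smooth varieties, hence an isomorphism. The restriction computation is then formal: $\pi^*\mc{E}(\mc{O}_L,\mc{L}')|_{\mc{O}_L+\mf{n}_n}$ is a local system shifted by $\dim\mc{O}_L$; the induction equivalence transports this shift to $G_0\times^{P_0}(\mc{O}_L+\mf{n}_n)$; and the proper pushforward $\mu_!=\mu_*$, acting as pullback along the inverse isomorphism over $\mc{O}$, gives a shifted local system on $\mc{O}$. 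Using the identification $\loc_{,G_0}(\mc{O})=\loc_{,L_0}(\mc{O}_L)$ from Section \ref{subsec6.2} that pairs $\mc{L}$ with $\mc{L}'$, this yields precisely $\mc{L}[\dim\mc{O}_L]$.

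The main obstacle is the fiber computation in part (b) — verifying $G_0^x\subset P_0$, equivalently $\mu^{-1}(x)=\{eP_0\}$. This requires combining the explicit $\chi'$-weight construction of $\mf{p},\mf{n},\mf{l}$ from Section \ref{subsec6.1} with $n$-rigidity of $(L,\chi)$ to control both the identity component of the stabilizer (trapped inside $L_0\subset P_0$) and its component group (pinned down by the isomorphisms of Proposition \ref{nrigid}(2)).
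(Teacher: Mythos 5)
Part (a) is sound, though by a slightly different route than the paper: you combine properness of $\mu$ with a dimension count and irreducibility of $G_0\cdot\mf{p}_n$, whereas the paper simply cites Lusztig \cite[5.9]{Lu} for $\mf{p}_n=\overline{P_0\cdot x}$, which gives $\mf{p}_n\subset\bar{\mc{O}}$ at once. Both arguments work.

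The genuine gap is in part (b), in the fiber computation. You reduce ``$\mu^{-1}(x)$ is a point'' to ``$G_0^x\subset P_0$,'' but this reduction is not valid. An element $gP_0$ of $\mu^{-1}(x)=\{gP_0: Ad(g^{-1})x\in\mf{p}_n\}$ only satisfies $Ad(g^{-1})x\in\mf{p}_n$, not $Ad(g^{-1})x=x$; such a $g$ need not stabilize $x$, and two cosets in the fiber need not differ by an element of $G_0^x$. Knowing $G_0^x\subset P_0$ therefore does not show the fiber is a singleton. What is actually needed---and what the paper uses---is Lusztig's sharper statement \cite[5.7]{Lu}: if $x\in Ad(g)\mf{p}$ then $Ad(g)\mf{p}=\mf{p}$, hence $g\in N_{G_0}(\mf{p})=P_0$. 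This is precisely the uniqueness of the parabolic $\mf{p}$ attached to the $\chi'$-weight decomposition of $x$, which you gesture at but never actually deploy. (One could try to salvage your reduction by additionally proving $\mf{p}_n\cap\mc{O}=P_0\cdot x$, so that $Ad(g^{-1})x=Ad(p)x$ for some $p\in P_0$ and then $gp\in G_0^x\subset P_0$; but you do not establish this, and doing so essentially amounts to reproving 5.7.) There is also a secondary flaw: your stated justification for $(G_0^x)^\circ\subset P_0$---invoking $L^x/(L^x)^\circ\cong L_0^x/(L_0^x)^\circ$ together with $L_0\subset P_0$---does not yield that containment, since that isomorphism concerns only $L$ and says nothing about where $(G_0^x)^\circ$ sits. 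The containment does hold, but the correct reason is the $\mf{sl}_2$-theoretic fact that $\mf{g}^x$ is supported in non-negative $\chi'$-weights, forcing $\mf{g}_0^x\subset\mf{p}_0$; that is a Lie-algebra observation, not a consequence of the component-group identifications you cite, and in any case it would not repair the main gap above.
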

\begin{proof}
\item
(a) Let $y\in \mf{g}_n$ be in the support of $\ind\mc{E}(\mc{O}_L,\mc{L}')$ . We need to show that $y\in \bar{\mathcal{O}}$. From the definition of induction, there exists $\eta \in \mf{p}_n$ and $g \in G_0$, such that,                                                                                                                                                                                                                                                                                                                                                                                                                                         $Ad(g)\eta=y$. Now both the support of $\ind\mc{E}(\mc{O}_L,\mc{L}')$ and $\bar{\mathcal{O}}$ are $G_0$-invariant. Hence we can replace $y$ by $\eta \in \mf{p}_n$. By \cite[ ~5.9]{Lu}, $\mf{p}_n$ coincides with the closure of the $P_0$-orbit of $x$ in $\mf{p}_n$ which is again contained in $\bar{\mathcal{O}}$. Hence $y \in \bar{\mathcal{O}}$ and part (a) is proved.
 \item (b) Recall the induction diagram
  \[\mf{l}_n \xleftarrow{\pi} \mf{p}_n \xrightarrow{e} G_0\times^{P_0}\mf{p}_n \xrightarrow{\mu} \mf{g}_n.\]
  Let $E_{\mathcal{O}}= \mu^{-1}(\mathcal{O})$. We  first show that $\mu$ is an isomorphism when restricted to $E_{\mathcal{O}}$. Actions of $G_0$ on $E_{\mathcal{O}}$ and $\mathcal{O}$ are compatible with the map $\mu$. Also action of $G_0$ on $\mathcal{O}$ is transitive. So to prove that $\mu$ is a bijection, it is enough to show that $\mu^{-1}(x)$ is a single point. Let $(g,\gamma) \in G_0\times \mf{p}_n$ be in the inverse image. So $Ad(g)\gamma= x$.  Therefore $x \in Ad(g)\mf{p}$ and by \cite[ ~5.7]{Lu}, $Ad(g)\mf{p}=\mf{p}$. Hence $g \in P_0$. Hence $(g,\gamma)=(1, Ad(g)\gamma)=(1,x)$. Hence $\mu^{-1}(x)$ is a singleton and $\mu$ is a bijection of smooth varieties, thus isomorphism on $E_{\mathcal{O}}$.
  
Let \begin{align*}
	\mathcal{G}= \ind\mc{E}(\mc{O}_L,\mc{L}')|_{\mathcal{O}}&=(\mu)_!(e^*\f^{G_0}_{P_0})^{-1}\pi^*(\mc{E}(\mc{O}_L,\mc{L}'))|_{\mathcal{O}}\\&=(\mu|_{E_{\mathcal{O}}})_{!}(e^*\f^{G_0}_{P_0})^{-1}\pi^*(\mc{E}(\mc{O}_L,\mc{L}'))|_{E_\mc{O}}.
\end{align*}
As $\mu|_{E_{\mathcal{O}}}$ is an isomorphism, hence $(\mu|_{E_{\mathcal{O}}})_!$ is an equivalence of categories. So in other words, $\mathcal{G}$ satisfies, \[(\mu|_{E_\mc{O}})^*(\mathcal{G})=(e^*\f^{G_0}_{P_0})^{-1}\pi^*(\mc{E}(\mc{O}_L,\mc{L}'))|_{E_\mc{O}}\] In fact $(\mu|_{E_\mc{O}})^*(\mathcal{G})$ is uniquely determined by,

\begin{equation}\label{6.33}
((e|_{E'_\mc{O}})^*\f^{G_0}_{P_0})(\mu|_{E_\mc{O}})^*(\mathcal{G})=(\pi|_{E'_\mc{O}})^*(\mc{E}(\mc{O}_L,\mc{L}')|_{\mc{O}_L})= (\pi|_{E'_\mc{O}})^*\mc{L}'[\dim \mc{O}_L],
\end{equation}
where $E'_{\mathcal{O}}=e^{-1}(E_{\mathcal{O}})$.
\[
\begin{tikzcd}
\bar{\mathcal{O}_L}                               & \bar{\mc{O}_L}+\mf{u}_n \arrow[l, "\pi"'] \arrow[r, "e"]                                & G_0\times^{P_0}(\bar{\mc{O}_L}+\mf{u}_n) \arrow[r, "\mu"]                      &\bar{\mc{O}}              \\
\mathcal{O}_L \arrow[u, hook]  & \mathcal{O}_L+\mf{u}_n \arrow[u, hook] \arrow[r, "e"] \arrow[l, "\pi"'] & G_0\times^{P_0}(\mathcal{O}_L+\mf{u}_n) \arrow[u, hook]  \\
\end{tikzcd}
\]As we already proved $E_\mc{O}\cong^\mu \mc{O}$ and $\mu$ is $G_0$ equivariant,  $E_\mc{O}$ is a single orbit. $G_0\times^{P_0}(\mc{O}_L+\mf{u}_n)$ is stable under $G_0$ action, therefore $E_\mc{O}\subset G_0\times^{P_0}(\mc{O}_L+\mf{u}_n)$ and it follows that $E'_\mc{O}\subset \mc{O}_L+\mf{u}_n$.  
Now we have the diagram below,
\[\begin{tikzcd}
	\mc{O}_L &E'_\mc{O}\arrow[l, "\pi|_{E'_\mc{O}}"] \arrow[r, hook, "e|_{E'_\mc{O}}"] & E_\mc{O} \arrow[r, "\mu|_{E_\mc{O}}"]& \mc{O}\\
	\mc{O}_L \arrow[u, "id"] \arrow[ur, hook, "j"'] \arrow[rrru, bend right, "i"]
\end{tikzcd}\]
 This diagram is commutative.
\begin{align*}
((e|_{E'_{\mc{O}}})^*\f^{G_0}_{P_0})(\mu|_{E_{\mc{O}}})^*\mc{L}[\dim \mc{O}_L]&=j_*i^*\mc{L}[\dim \mc{O}_L] \\&=(\pi|_{E'_{\mc{O}}})^*(\mc{L}[\dim\mc{O}_L]|_{\mc{O}_L})\\&= (\pi|_{E'_{\mc{O}}})^*(\mc{L}'[\dim \mc{O}_L])\\&=\pi^*\mc{E}(\mc{O}_L,\mc{L}')|_{E'_{\mc{O}}}.
\end{align*}
We already know $e|_{E'_\mc{O}}$ is closed embedding and $\mu|_{E_\mc{O}}$ is smooth, so the pull back of these maps induce faithful functor on the local systems. Hence from the above equation and (\ref{6.33}), we have $\mc{G}=\mc{L}[\dim \mc{O}_L]$. 

\end{proof}
\


 
\subsection{Modular reduction}\label{sec5.3} Recall from the background,  we assumed that there exists a finite extension $\mb{K}$ of $\mb{Q}_l$ with ring of integers $\mb{O}$ and residue field $\Bbbk$. Also assume for each $x\in \mc{N}_G$, all the irreducible representations of $A_G(x)$ are defined over $\mb{K}$. Let $K_G(\mc{N}_G,\Bbbk)$ denote the Grothendieck group generated by the isomorphism classes of simple objects in $\p_G(\mc{N}_G,\Bbbk)$. Similarly  define $K_G(\mc{N}_G,\mb{K})$. By \cite[ ~2.9]{J2}, there exists a $\mb{Z}$-linear map,
\[d: K_G(\mc{N}_G,\mb{K})\to K_G(\mc{N}_G,\Bbbk).\] This map is called  modular reduction map and is defined in the following way:
If $\mc{F}\in \p_G(\mc{N}_G,\mb{K})$ and $\mc{F}_\mb{O}$, a torsion free object in $\p_G(\mc{N}_G,\mb{O})$ such that $\mc{F}\cong \mb{K}\otimes_\mb{O} \mc{F}_\mb{O}$, then
\[d([\mc{F}])=[\Bbbk\otimes _\mb{O}^L \mc{F}_\mb{O}].\]If $\mc{F}=\mc{IC}(C,\mc{E})$, then there exists a $G$-equivariant local system $\mc{E}_\mb{O}$ on $C$ such that $\mc{E}=\mb{K}\otimes_\mb{O}\mc{E}_\mb{O} $, and
\[d[\mc{IC}(C,\mc{E})]=[\Bbbk \otimes^L_\mb{O} \mc{IC}(C,\mc{E}_\mb{O})].\]By \cite[prop ~2.39]{parity}, $ \Bbbk \otimes^L_\mb{O} \mc{E}(C,\mc{E}_\mb{O})\cong \mc{E}(C,\Bbbk\otimes^L_\mb{O}\mc{E}_\mb{O}) $.  We can call $ \Bbbk\otimes^L_\mb{O}\mc{E}_\mb{O} $ to be $\mc{E}_\Bbbk$.
\begin{theorem}\label{th6.3}
The modular reduction above gives a well-defined map with the following properties:
\begin{enumerate}
	\item $Irr(\mb{K}[G^x/(G^x)^\circ]-mod)\xrightarrow{\cong} Irr(\Bbbk[G^x/(G^x)^\circ]-mod)$.
	\item If $M$ is a torsion free module in $\mb{O}[G^x/(G^x)^\circ]$, then the direct summands of $\mb{K}\otimes_\mb{O} M$ are in bijection with  the direct summands of $\Bbbk\otimes_\mb{O} M$.
\end{enumerate}
\end{theorem}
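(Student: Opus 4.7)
The approach is to reduce everything to classical modular representation theory of the finite group $A := G^x/(G^x)^\circ$, exploiting that by Lemma~\ref{lm1.4}, Assumption~\ref{char}, and Remark~\ref{rk2.4.2}, the order $|A|$ is invertible in $\Bbbk$, and hence in $\mb{O}$. Thus $\mb{O}[A]$, $\mb{K}[A]$, and $\Bbbk[A]$ are all semisimple (Maschke), and by Assumption~\ref{char}(2) their simple modules are absolutely irreducible. Well-definedness of $d$ requires exhibiting a torsion-free $\mb{O}$-form of any simple $\mc{IC}(C,\mc{E})$, which follows because the irreducible $\mb{K}[A]$-module corresponding to $\mc{E}$ admits an $\mb{O}$-lattice (standard averaging argument, invoking $|A|^{-1}\in\mb{O}$), and different choices of lattice produce the same class in $K_G(\mc{N}_G,\Bbbk)$ by the argument cited from \cite{J2}.

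For part (1), the central primitive idempotents of $\mb{K}[A]$ are given by the usual character-sum formulas divided by $|A|$, and therefore already lie in $\mb{O}[A]$; reduction modulo the maximal ideal of $\mb{O}$ sends them to a complete set of central primitive idempotents of $\Bbbk[A]$. Indexing simple modules by these blocks yields a bijection compatible with $d$. Equivalently, one may invoke the $CDE$-triangle and observe that when $l\nmid |A|$ the decomposition matrix is the identity.

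For part (2), the key tool is the lifting of idempotents over the complete local ring $\mb{O}$: for torsion-free $M$, the endomorphism ring $\operatorname{End}_{\mb{O}[A]}(M)$ is a finitely generated $\mb{O}$-algebra, and any idempotent decomposition of $\operatorname{End}_{\Bbbk[A]}(\Bbbk\otimes M)$ lifts uniquely to $\operatorname{End}_{\mb{O}[A]}(M)$ and then base-changes to give a compatible decomposition of $\operatorname{End}_{\mb{K}[A]}(\mb{K}\otimes M)$. Combined with Krull--Schmidt over both fields and with part (1), this produces the bijection of direct summand decompositions. The main obstacle I anticipate is making the lifting step rigorous in the non-commutative setting of $\operatorname{End}_{\mb{O}[A]}(M)$; this is standard once one knows $\mb{O}$ is Henselian (equivalently, complete), but should be written out carefully to ensure indecomposables on the $\Bbbk$-side correspond to indecomposable $\mb{O}$-summands of $M$ and then to indecomposables on the $\mb{K}$-side.
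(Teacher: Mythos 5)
Your argument is correct and, at the level of ideas, is exactly the same as the paper's, which simply invokes Curtis--Reiner Theorem 82.1 (the classical ``$l\nmid|A|$'' situation of modular representation theory, where the decomposition matrix is the identity); what you have written is essentially a proof sketch of that cited theorem, hinging on the same crucial observations: $|A_G(x)|$ is invertible in $\Bbbk$ because $l$ is pretty good, $\mb{K}$ and $\Bbbk$ are splitting fields, and $\mb{O}$ is complete so idempotents lift. One small caveat: $\mb{O}[A]$ is not literally semisimple as a ring (since $\mb{O}$ is not Artinian); the accurate statement, which is what you use downstream, is that $\mb{O}[A]$ is a product of matrix rings over $\mb{O}$ (equivalently a maximal / separable $\mb{O}$-order), so that every $\mb{O}[A]$-lattice is projective and Maschke's averaging argument works over $\mb{O}$.
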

The proof follows from \cite[Theorem ~82.1]{CR}.
\\
\\
Let $P$ be a parabolic subgroup with $L$ as the Levi factor.
\begin{theorem}\label{th6.1}
If $(C,\mc{F})\in \ms{I}(L)^{0-\cu}$ and  $(C,\mc{G})\in \ms{I}(L,\mb{K})^{0-\cu}$, whose modular reduction is $\mc{F}$, then,
\begin{enumerate}
\item $\mc{IC}(C,\mc{G}_\mb{O})$ is clean, and
\item $\Ind^G_P\mc{IC}(C,\mc{G}_\mb{O})$ is parity.	
\end{enumerate}
	
\end{theorem}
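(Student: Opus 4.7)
The plan is to establish both parts by reducing to the corresponding statements over the fraction field $\mb{K}$, where Lusztig's characteristic zero results apply, and over the residue field $\Bbbk$, where Mautner's conjecture \ref{con} and Conjecture \ref{2.6(c)} apply, and then lifting to $\mb{O}$ using flatness of $\mb{K}$ over $\mb{O}$ together with Nakayama's lemma in the form that $\Bbbk \otimes^L_\mb{O} (-)$ detects vanishing modulo a uniformizer.

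For (1), since $(C,\mc{F})$ is $0$-cuspidal over $\Bbbk$, Mautner's conjecture gives $\mc{IC}(C,\mc{F}) = j_!\mc{F}[\dim C]$, where $j\colon C \hookrightarrow \bar{C}$ is the open inclusion; likewise $\mc{IC}(C,\mc{G}) = j_!\mc{G}[\dim C]$ over $\mb{K}$ by the decomposition theorem and Lusztig's classical cleanness result for cuspidal pairs in characteristic zero. Since $j_!$ commutes with both $\otimes_\mb{O}\mb{K}$ and $\otimes^L_\mb{O}\Bbbk$, the generic and special fibres of the $\mb{O}$-complex $j_!\mc{G}_\mb{O}[\dim C]$ are both perverse. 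By the standard perversity criterion over a discrete valuation ring (cf.\ \cite{J2}), $j_!\mc{G}_\mb{O}[\dim C]$ is itself a perverse $\mb{O}$-sheaf; by a dual argument so is $j_*\mc{G}_\mb{O}[\dim C]$, and the natural map between them is an isomorphism after both base changes, hence is an isomorphism over $\mb{O}$ by Nakayama. This identifies $\mc{IC}(C,\mc{G}_\mb{O})$ with $j_!\mc{G}_\mb{O}[\dim C]$, proving cleanness.

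For (2), using (1), one can write $\Ind^G_P\mc{IC}(C,\mc{G}_\mb{O}) = c_!(b^*\f^G_P)^{-1}a^*\mc{G}_\mb{O}[\dim C]$ via the cuspidal parabolic induction diagram of \ref{subsec5.2}. Each of these functors commutes with $\otimes_\mb{O}\mb{K}$ and $\otimes^L_\mb{O}\Bbbk$ by flatness and proper base change, so
\begin{align*}
\mb{K}\otimes_\mb{O}\Ind^G_P\mc{IC}(C,\mc{G}_\mb{O}) &= \Ind^G_P\mc{IC}(C,\mc{G}), \\
\Bbbk\otimes^L_\mb{O}\Ind^G_P\mc{IC}(C,\mc{G}_\mb{O}) &= \Ind^G_P\mc{IC}(C,\mc{F}).
\end{align*}
The first is parity over $\mb{K}$ by \cite[24.8]{Lu2}; the second is parity over $\Bbbk$ by Conjecture \ref{2.6(c)}. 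For any stratum $X \subset \mc{N}_G$ and $y \in X$, the stalk cohomology is an $\mb{O}$-module whose $\mb{K}$-part vanishes in odd degrees (so odd cohomology is torsion) and whose $\Bbbk$-reduction vanishes in odd degrees (forcing $\h^{\text{odd}} \otimes_\mb{O} \Bbbk = 0$ and the $\pi$-torsion of $\h^{\text{even}}$ to vanish, via universal coefficients). By Nakayama, $\h^{\text{odd}}$ vanishes and $\h^{\text{even}}$ is a free $\mb{O}$-module; applying the same reasoning to costalks yields parity of $\Ind^G_P\mc{IC}(C,\mc{G}_\mb{O})$ over $\mb{O}$.

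The main obstacle is ensuring that $\Bbbk\otimes^L_\mb{O}\mc{IC}(C,\mc{G}_\mb{O})$ genuinely equals $\mc{IC}(C,\mc{F})$ and not merely an object in its class in the Grothendieck group; a priori the modular reduction of an IC sheaf can acquire composition factors supported on $\bar C - C$. Mautner's cleanness on the $\Bbbk$ side is exactly what is needed here, because it forces $j_!\mc{F}[\dim C]$ to be perverse, allowing the perversity criterion to bootstrap into the identification $\mc{IC}(C,\mc{G}_\mb{O}) = j_!\mc{G}_\mb{O}[\dim C]$. Once (1) is in hand, the parity in (2) becomes the formal business of transporting parity through a base-change square.
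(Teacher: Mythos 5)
Your proof is correct, and for part (1) it takes a genuinely different route from the paper's. The paper argues by contradiction: assuming a nonzero stalk of $\mc{IC}(C,\mc{G}_\mb{O})$ on $\bar C - C$, it observes this stalk must be $\mb{O}$-torsion (since the $\mb{K}$-reduction vanishes there), then exhibits a splitting of the distinguished triangle for the open-closed decomposition (the connecting map dies because $i^!\mc{IC}(C,\mc{F}) = 0$ by cleanness of the dual cuspidal), concluding that the torsion part contributes a perverse direct summand supported on $\bar C - C$; it then kills this summand by noting that the $\Bbbk$-reduction of a nonzero torsion $\mb{O}$-module has cohomology in two consecutive degrees, incompatible with a simple perverse sheaf being a shifted local system on its dense stratum. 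You instead work directly with the natural morphism $j_!\mc{G}_\mb{O}[\dim C] \to j_*\mc{G}_\mb{O}[\dim C]$, show both sides are $\mb{O}$-perverse by checking on both fibres, and conclude it is an isomorphism because it is one after $\Bbbk\otimes^L_\mb{O}(-)$ (conservativity / Nakayama). Your route is arguably cleaner: it avoids the explicit splitting and directly identifies $\mc{IC}(C,\mc{G}_\mb{O})$ with $j_!\mc{G}_\mb{O}[\dim C]$. It does implicitly use the same two inputs (Lusztig's cleanness over $\mb{K}$ and Mautner's cleanness over $\Bbbk$, together with Remark~\ref{rk1.2.1} for the dual to handle $j_*$), so the underlying ingredients match. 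One small redundancy: conservativity of $\Bbbk\otimes^L_\mb{O}(-)$ alone already forces the natural map to be an isomorphism; you do not also need the generic fibre for that step.

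For part (2) your argument is essentially identical to the paper's: the paper computes the two base changes and then cites \cite[Prop.~2.37]{parity}, whereas you unwind that proposition by hand via the universal-coefficient computation on stalks and costalks (odd cohomology is torsion from the $\mb{K}$-side, vanishes mod $\varpi$ from the $\Bbbk$-side, hence vanishes by Nakayama; even cohomology has no $\varpi$-torsion, hence is free). This is precisely the content of the cited proposition, so there is no mathematical divergence, only a choice between citation and explicit argument.
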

\begin{proof}
\begin{enumerate}
\item If there exists $y\in \bar{C}-C$ such that $\mc{IC}(C,\mc{G}_\mb{O})_y\ne 0$, then it must have torsion part only; otherwise, $\mb{K}\otimes^L\mc{IC}(C,\mc{G}_\mb{O})_y\ne 0$ but  is same as $\mc{IC}(C,\mc{G})_y$ which is zero by Conjecture \ref{con}. Also by \cite[~ 2.6]{J2}, $\Bbbk\otimes^L_{\mb{O}} \mc{IC}(C,\mc{G}_\mb{O})$ is a perverse sheaf with  $\Bbbk\otimes^L_{\mb{O}} \mc{IC}(C,\mc{G}_\mb{O})|_C=\Bbbk \otimes^L_\mb{O} \mc{G}_\mb{O}[\dim C]$. We have the open and closed embeddings below,
\[\begin{tikzcd}
C \arrow[r, hook, "j"] & \bar{C} & \bar{C}-C\arrow[l, hook, "i"']	\end{tikzcd},
 \]which gives rise to the distinguished triangle,
 \[j_! \Bbbk \otimes^L_\mb{O} \mc{G}_\mb{O}[\dim C]\to \Bbbk\otimes^L_{\mb{O}} \mc{IC}(C,\mc{G}_\mb{O}) \to i_*i^* (\Bbbk\otimes^L_{\mb{O}} \mc{IC}(C,\mc{G}_\mb{O}))\to\hspace{2mm}.\]
 By conjecture \ref{con}, $ j_! (\Bbbk \otimes^L_\mb{O} \mc{G}_\mb{O}[\dim C] )=\mc{IC}(C,\Bbbk\otimes^L_\mb{O} \mc{G}_\mb{O})$. 
 The third morphism in the above distinguished triangle is $0$, as $i^!(\mc{IC}(C, \Bbbk\otimes^L_\mb{O} \mc{G}_\mb{O}))=0$ by conjecture \ref{con}. Therefore this distinguished triangle splits and we have,
 \[ \Bbbk\otimes^L_{\mb{O}} \mc{IC}(C,\mc{G}_\mb{O})= \mc{IC}(C,\Bbbk\otimes^L_\mb{O} \mc{G}_\mb{O})\oplus i_*i^* (\Bbbk\otimes^L_{\mb{O}} \mc{IC}(C,\mc{G}_\mb{O})).\]   As $  \Bbbk\otimes^L_{\mb{O}} \mc{IC}(C,\mc{G}_\mb{O}) $ and $ \mc{IC}(C,\Bbbk\otimes^L_\mb{O} \mc{G}_\mb{O}) $ are perverse, so $ i_*i^* (\Bbbk\otimes^L_{\mb{O}} \mc{IC}(C,\mc{G}_\mb{O})) $ must be perverse.
  Now it will not be hard to check that for a torsion $\mb{O}$-module $M$, $\h^i(\Bbbk \otimes^L_{\mb{O}} M)$ is nonzero for $i=0,-1$. So if we choose an open orbit $\mc{O}'$ in the support of $i_*i^*(\Bbbk\otimes\mc{IC}(C,\mc{G}_\mb{O}))$ such that $y$ is in that orbit then by the above statement we will have $\h^i(\Bbbk \otimes^L_{\mb{O}} \mc{IC}(C,\mc{G}_\mb{O})_y)\ne 0$ for $i=-\dim \mc{O}',-\dim{O}'-1$, which contradicts the perversity of $ i_*i^* (\Bbbk\otimes^L_{\mb{O}} \mc{IC}(C,\mc{G}_\mb{O}))$.
\item Note that $\mb{K}\otimes^L_\mb{O}\Ind^G_P\mc{IC}(C,\mc{G}_\mb{O})=\Ind^G_P\mc{IC}(C,\mc{G})$ which is parity because it is in characteristic $0$.  Also, $\Bbbk\otimes^L_\mb{O}\Ind^G_P\mc{IC}(C,\mc{G}_\mb{O})\cong \Ind^G_P\mc{E}(C,\mc{F})$ 
is parity by Conjecture \ref{2.6(c)}. Combining these two facts and using \cite[Prop. ~2.37]{parity}, $\Ind^G_P\mc{IC}(C,\mc{G}_\mb{O})$ is parity.

\end{enumerate}

\end{proof}

\begin{theorem}\label{th6.2}
For $(C,\mc{F})\in \ms{I}(G)$, there exists a Levi subgroup $L$ and a pair $(C',\mc{F}')\in \ms{I}(L)^{0-\cu}$ such that, $\h^{-\dim{C}}(\Ind^G_P\mc{IC}(C',\mc{F}'))|_C$ contains $\mc{F}$ as a direct summand.
	
\end{theorem}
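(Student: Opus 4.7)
The strategy is to reduce to Lusztig's theorem in characteristic zero, using the integral coefficients $\mb{O}$ as a bridge between characteristic zero and $\Bbbk$.

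First, I use the modular reduction bijection $\ms{I}(G,\mb{K}) \xrightarrow{\sim} \ms{I}(G,\Bbbk)$ to lift $(C,\mc{F})$ to a pair $(C,\mc{G}) \in \ms{I}(G,\mb{K})$. By Lusztig's theorem in characteristic zero, there exist a parabolic $P\subset G$ with Levi factor $L$ and a cuspidal pair $(C',\mc{G}') \in \ms{I}(L,\mb{K})^{\cu}$ such that $\mc{IC}(C,\mc{G})$ appears as a direct summand of $\Ind^G_P\mc{IC}(C',\mc{G}')$. Let $(C',\mc{F}')$ denote the modular reduction of $(C',\mc{G}')$; by definition $(C',\mc{F}') \in \ms{I}(L)^{0-\cu}$.

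Next I work over $\mb{O}$. By Theorem~\ref{th6.1}(1), $\mc{IC}(C',\mc{G}'_\mb{O})$ is clean, so the cuspidal-induction construction from subsection~\ref{subsec5.2} yields a well-defined integral induction $M := \Ind^G_P\mc{IC}(C',\mc{G}'_\mb{O})$, and by Theorem~\ref{th6.1}(2), $M$ is a parity complex in $D^b_G(\mc{N}_G,\mb{O})$. In particular the stalks $\h^i(M)|_C$ are free $\mb{O}$-local systems on $C$; since $\dim C$ is even, this freeness forces $\mathrm{Tor}^\mb{O}_1(\Bbbk,\h^{-\dim C+1}(M)|_C)=0$, so the universal-coefficient spectral sequence degenerates in degree $-\dim C$ to give
\[
\Bbbk \otimes_{\mb{O}} N \cong \h^{-\dim C}(\Bbbk\otimes^L_{\mb{O}} M)|_C, \qquad \mb{K}\otimes_{\mb{O}} N \cong \h^{-\dim C}(\mb{K}\otimes_{\mb{O}} M)|_C,
\]
where $N := \h^{-\dim C}(M)|_C$. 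Since $\Ind^G_P$ commutes with scalar extension, these identify respectively with $\h^{-\dim C}(\Ind^G_P\mc{IC}(C',\mc{F}'))|_C$ and $\h^{-\dim C}(\Ind^G_P\mc{IC}(C',\mc{G}'))|_C$.

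Finally I extract the summand. In characteristic zero, $\Ind^G_P\mc{IC}(C',\mc{G}')$ is semisimple perverse, so Lusztig's summand $\mc{IC}(C,\mc{G})$ contributes $\mc{G}$ directly to $\h^{-\dim C}(-)|_C$; contributions from other simple summands only enlarge this sum, so $\mc{G}$ is a direct summand of $\mb{K}\otimes_{\mb{O}} N$. Viewing $N$ as a torsion-free $\mb{O}[A_G(x)]$-module, Theorem~\ref{th6.3}(2) provides a bijection between direct summands of $\mb{K}\otimes_{\mb{O}} N$ and of $\Bbbk\otimes_{\mb{O}} N$, and Theorem~\ref{th6.3}(1) matches the $\mc{G}$-summand with the $\mc{F}$-summand. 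Hence $\mc{F}$ is a direct summand of $\h^{-\dim C}(\Ind^G_P\mc{IC}(C',\mc{F}'))|_C$. The main technical obstacle is justifying that a parity complex over $\mb{O}$ has torsion-free stalks on strata (required for the Tor-vanishing above) and then tracking summand structures carefully across the two scalar extensions; both points rest on Theorem~\ref{th6.1}(2) together with Theorem~\ref{th6.3}.
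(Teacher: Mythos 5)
Your proposal follows the same architecture as the paper's proof: lift $(C,\mc{F})$ to $(C,\mc{G})$ over $\mb{K}$ via the modular reduction bijection, invoke Lusztig's result from \cite{Lu3} to find the cuspidal pair $(C',\mc{G}')$ on a Levi, pass to $\mb{O}$-coefficients via Theorem~\ref{th6.1} to get torsion-freeness of $N=\h^{-\dim C}(\Ind^G_P\mc{IC}(C',\mc{G}'_\mb{O}))|_C$, and then match summands across the two scalar extensions via Theorem~\ref{th6.3}. The only cosmetic difference is how torsion-freeness of $N$ is justified: the paper argues by contradiction (torsion in $N$ would produce cohomology of the mod-$l$ reduction in two consecutive degrees, contradicting the parity guaranteed by Theorem~\ref{th6.1}(2)), whereas you appeal directly to the freeness of stalks of a parity complex over $\mb{O}$, which is indeed part of the conclusion of JMW Prop.~2.37 underlying Theorem~\ref{th6.1}(2). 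One small wording slip: the Tor-vanishing $\mathrm{Tor}^{\mb{O}}_1(\Bbbk,\h^{-\dim C+1}(M)|_C)=0$ comes from the freeness (or vanishing) of the stalk in degree $-\dim C+1$, not from ``$\dim C$ is even'' per se; the parenthetical clause is unnecessary, since freeness of all stalks already does the job. With that clarification the argument is correct and essentially identical to the paper's.
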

\begin{proof} By \cite{AJHR}, if $l$ is rather good, which is our assumption here, we have a bijection,
 \[K_G(\mc{N}_G,\mb{K})\xrightarrow{\cong} K_G(\mc{N}_G,\Bbbk).\]
Let $(C,\mc{G})\in \ms{I}(G,\mb{K})$ be the pair whose modular reduction is $\mc{F}$.
By \cite{Lu3}, there exists a parabolic subgroup $P$ with Levi $L$ and $(C',\mc{G}')\in \ms{I}(L,\mb{K})^{0-\cu}$ be such that $\mc{IC}(C,\mc{G})$ is a direct summand of  $\Ind^G_P\mc{IC}(C',\mc{G}')$ .
\[\mb{K}\otimes^L \h^{-\dim{C}}(\Ind^G_P\mc{IC}(C',\mc{G}'_\mb{O}))|_C \cong\h^{-\dim{C}}(\Ind^G_P\mc{IC}(C',\mc{G}'))|_C\] which contains $\mc{G}$. Now, $\h^{-\dim{C}}(\Ind^G_P\mc{IC}(C',\mc{G}'_\mb{O}))|_C $ is torsion-free. If not, then  $ \Bbbk \otimes^L_\mb{O}\h^{-\dim{C}}(\Ind^G_P\mc{IC}(C',\mc{G}'_\mb{O}))|_C $ has cohomology concentrated in two consecutive degrees, which contradicts Theorem \ref{th6.1}(2). By Theorem \ref{th6.3}, direct summands appearing in $\h^{-\dim{C}}(\Ind^G_P\mc{IC}(C',\mc{G}'))|_C$     are in bijection with direct summands appearing in $\h^{-\dim{C}}(\Ind^G_P\mc{IC}(C',\mc{G}'_\Bbbk))|_C$. Set $\mc{F}'$ to be $\mc{G}'_\Bbbk$. Hence $\mc{F}'\in \ms{I}(L)^{0-\cu}$ and 
 $\h^{-\dim{C}}(\Ind^G_P\mc{IC}(C',\mc{F}'))|_C$ contains $\mc{F}$ as a direct summand.

\end{proof}                                                                                              

\begin{proposition}\label{prop6.6.1}
Let $(\mc{O},\mc{L})\in \ms{I}(\mf{g}_n)$.	There exists an  integer $b$ and $(\mc{O}',\mc{L}')\in \i'$ for some parabolic subgroup $P$ with the Levi subgroup $L$, such that $\mc{L}$ is a direct summand of $\h^{b-\dim \mc{O}'}(\ind\mc{IC}(\mc{O}',\mc{L}'))|_\mc{O}$.  
\end{proposition}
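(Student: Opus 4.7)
\emph{Plan.} The approach mirrors the proof of the parabolic induction result for the nilpotent cone (the theorem about $\Ind^G_P$ on $\mc{N}_G$ given earlier in this section), transplanted to the graded setting $\mf{g}_n$. First I would invoke a modular reduction bijection for $\mf{g}_n$, $\ms{I}(\mf{g}_n,\mb{K})\xrightarrow{\cong}\ms{I}(\mf{g}_n,\Bbbk)$, to lift $(\mc{O},\mc{L})$ to a pair $(\mc{O},\mc{L}_\mb{K})$ over $\mb{K}$; the existence of this bijection follows from \cite[Theorem~82.1]{CR} combined with the fact---established in the proof of the parity-vanishing theorem of Subsection~\ref{subsec6.2}---that $|A_{G_0}(x)|$ is invertible in $\Bbbk$ for every $x\in \mf{g}_n$. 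Next I would invoke Lusztig's classical theorem from \cite{Lu}: every simple $G_0$-equivariant perverse sheaf on $\mf{g}_n$ in characteristic~$0$ appears, with some cohomological shift $[k]$, as a direct summand of $\ind\mc{IC}(\mc{O}',\mc{L}'_\mb{K})$ for some parabolic $P\supset \chi(\mb{C}^\times)$, Levi $L\subset P$, and cuspidal pair $(\mc{O}',\mc{L}'_\mb{K})\in \ms{I}(\mf{l}_n,\mb{K})^{\cu}$. Restricting to $\mc{O}$ and extracting the relevant cohomology sheaf produces an integer $b = \dim\mc{O}' - \dim\mc{O} - k$ such that $\mc{L}_\mb{K}$ is a direct summand of $\h^{b-\dim\mc{O}'}(\ind\mc{IC}(\mc{O}',\mc{L}'_\mb{K}))|_\mc{O}$.

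\emph{Descent to $\Bbbk$ via parity.} Choose a torsion-free $\mb{O}$-form $\mc{L}'_\mb{O}$ of $\mc{L}'_\mb{K}$ and set $\mc{L}' := \Bbbk\otimes^L_\mb{O}\mc{L}'_\mb{O}$, so that $(\mc{O}',\mc{L}')\in \i'$. The key point is that $\ind\mc{IC}(\mc{O}',\mc{L}'_\mb{O})$ is parity over $\mb{O}$: tensoring with $\mb{K}$ gives $\ind\mc{IC}(\mc{O}',\mc{L}'_\mb{K})$, which is parity in characteristic~$0$ (in fact semisimple by the decomposition theorem), while the derived tensor with $\Bbbk$ gives $\ind\mc{IC}(\mc{O}',\mc{L}')$, which is parity by Theorem~\ref{th5} (together with Corollary~\ref{existance for cuspidal} to identify $\mc{IC}(\mc{O}',\mc{L}')$ with $\E(\mc{O}',\mc{L}')$). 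By \cite[Proposition~2.37]{parity} this forces $\ind\mc{IC}(\mc{O}',\mc{L}'_\mb{O})$ itself to be parity over $\mb{O}$. In particular the stalk $\h^{b-\dim\mc{O}'}(\ind\mc{IC}(\mc{O}',\mc{L}'_\mb{O}))|_\mc{O}$ is torsion-free, since otherwise its derived $\Bbbk$-tensor would have cohomology in two consecutive degrees, contradicting parity---exactly as in the nilpotent cone argument.

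\emph{Conclusion and main obstacle.} With torsion-freeness in hand, the analog of Theorem~\ref{th6.3}(2) for the component group $A_{G_0}(x)$ (again available because $|A_{G_0}(x)|$ is invertible in $\Bbbk$) gives a bijection between the direct-summand decompositions of $\mb{K}\otimes_\mb{O}\h^{b-\dim\mc{O}'}(\ind\mc{IC}(\mc{O}',\mc{L}'_\mb{O}))|_\mc{O}$ and of $\Bbbk\otimes^L_\mb{O}\h^{b-\dim\mc{O}'}(\ind\mc{IC}(\mc{O}',\mc{L}'_\mb{O}))|_\mc{O}$ as $G_0$-equivariant local systems on $\mc{O}$. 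Since $\mc{L}_\mb{K}$ is a summand on the $\mb{K}$-side, its modular reduction $\mc{L}$ must be a summand on the $\Bbbk$-side, which equals $\h^{b-\dim\mc{O}'}(\ind\mc{IC}(\mc{O}',\mc{L}'))|_\mc{O}$. The main obstacle is the $\mb{O}$-integral bookkeeping: one needs to verify that $\ind$ is compatible with $\mb{O}$-coefficients and derived tensor products, so that \cite[Proposition~2.37]{parity} actually applies, and to make the modular-reduction bijection for $\ms{I}(\mf{g}_n)$ precise at the level of local systems via \cite[Theorem~82.1]{CR}.
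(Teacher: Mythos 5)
Your proposal is correct in outline but takes a genuinely different route from the paper. The paper proves Proposition~\ref{prop6.6.1} by a two-stage reduction: it first applies the nilpotent-cone result (Theorem~\ref{th6.2}) to obtain a $0$-cuspidal pair $(C',\mc{E}')$ on a Levi with $\mc{E}$ a summand of $\h^{-\dim C}(\Ind^G_Q\mc{IC}(C',\mc{E}'))|_C$, and then transfers this to the graded piece $\mf{g}_n$ through a fixed-point/Euler-characteristic computation on the fibers $Y_y=c^{-1}(y)$. Specifically, by Lemma~\ref{lm7} the $\mb{C}^\times$-equivariant cohomology of $Y_y\setminus (Y_y)^{\mb{C}^\times}$ is finite-dimensional, Theorem~\ref{euler} then kills its Euler characteristic, and since both $\h^*_c(Y_y,\cdot)$ and $\h^*_c((Y_y)^{\mb{C}^\times},\cdot)$ live in even degrees (Conjecture~\ref{2.6(c)} and Theorem~\ref{th7}), the alternating sums are honest isomorphisms. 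Decomposing $(Y_y)^{\mb{C}^\times}=\sqcup_i Z^i$ into the fibers of the graded inductions $\inn^\mf{g}_{\mf{q}^i}$ exactly as in Theorem~\ref{th5} then forces $\mc{L}=\mc{E}|_\mc{O}$ to be a summand of one of them. Your route replaces both the appeal to Theorem~\ref{th6.2} and the Euler-characteristic transfer by carrying out the modular-reduction argument directly on $\mf{g}_n$: lift $(\mc{O},\mc{L})$ over $\mb{K}$, invoke Lusztig's characteristic-zero theorem on $\mf{g}_n$ to realize $\mc{IC}(\mc{O},\mc{L}_\mb{K})$ inside a graded induction of a cuspidal pair, and descend through an $\mb{O}$-form via parity and torsion-freeness, mimicking the paper's proof of Theorem~\ref{th6.2} itself but one level up.

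This is a reasonable alternative, and you are right to flag the two inputs it needs that the paper does not explicitly supply. First, the bijection $\ms{I}(\mf{g}_n,\mb{K})\cong\ms{I}(\mf{g}_n,\Bbbk)$: this does follow from the invertibility of $|A_{G_0}(x)|$ (established inside the proof of the parity-vanishing theorem of Subsection~\ref{subsec6.2}) together with the standard rep-theoretic fact, so it is fillable, but it is a statement the paper never formulates. Second, and this is the more substantive point, parity of $\ind\mc{IC}(\mc{O}',\mc{L}'_\mb{K})$ over $\mb{K}$: your parenthetical ``in fact semisimple by the decomposition theorem'' is not by itself sufficient, since semisimplicity of the induced complex does not give parity unless one also knows that the constituent $\mc{IC}$ sheaves on $\mf{g}_n$ have pointwise even stalks and costalks; this is a theorem of Lusztig in \cite{Lu} (pointwise purity in the graded setting) and should be cited explicitly, in the same way the paper cites \cite[24.8]{Lu2} when it asserts Conjecture~\ref{2.6(c)} in characteristic zero. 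With those two facts pinned down, your argument goes through and is arguably more transparent than the paper's: it avoids the Euler-characteristic bookkeeping entirely, at the cost of developing a bit more modular-reduction machinery directly on $\mf{g}_n$ rather than borrowing it from the nilpotent cone.
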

\begin{proof}

Let $(\mc{O},\mc{L})\in \ms{I}(\mf{g}_n)$ and $(C,\mc{E})\in \ms{I}(G)$ such that $C\cap \mf{g}_n=\mc{O}$ and $\mc{E}|_\mc{O}=\mc{L}$. By Theorem \ref{th6.2}, there exists  $Q$, a parabolic subgroup containing $ \chi(\mb{C}^\times)$  with $M$, the Levi subgroup such that $(C',\mc{E}')\in \ms{I}(M)^{0-\cu}$ and 	$\h^{-\dim C}(\inn^G_Q\mc{IC}(C',\mc{E}'))|_C$ contains $\mc{E}$ as direct summand. 

For this we will  imitate the proof of Theorem \ref{th5}. Let $y\in \mc{O}$, we construct the parabolic induction diagram corresponding to $(Q,M,C')$. That is,
\[C'\xleftarrow{a} C'+\mf{u}_Q\xrightarrow{b} G\times^Q (C'+\mf{u}_Q) \xrightarrow {c} \mc{N}_G\]Let $Y_y=c^{-1}(y)$ and  recall that, 

 \[\h^a_c(Y_y,(b^*\f^G_Q)^{-1}a^* \mc{E}'[\dim C']|_{Y_y})=0\text{ for $a$ odd.}\]

Also recall the action of $\mb{C}^\times$ on $Y_y$ defined in  the proof of Theorem \ref{th5}. Let again $Y_y^{\mb{C}^\times}$ be the fixed point set of this action. It is not hard to see that the stabilizer of each point in the complement of $(Y_y)^{\mb{C}^\times}$ is trivial. Hence  by The Lemma \ref{lm7}, $\dim \h^*_{\mb{C}^\times}(Y_y-(Y_y)^{\mb{C}^\times})< \infty$. Now  using Lemma \ref{euler}, the Euler characteristic of $\h^*_c(Y_y-(Y_y)^{\mb{C}^\times})$ is $0$, so we have, 

\begin{equation}\label{28}
\sum_{a}{(-1)^a\h^a_c(Y_y,(b^*\f^G_Q)^{-1}a^* \mc{E}'[\dim C']|_{Y_y})}\\=\sum_{a}{(-1)^a\h^a_c((Y_y)^{\mb{C}^\times}, (b^*\f^G_Q)^{-1}a^* \mc{E}'[\dim C']|_{Y_y^{\mb{C}^\times}})}
\end{equation}
As the stabilizer of each point in $Y_y-(Y_y)^{\mb{C}^\times}$ is trivial, so from Theorem \ref{th7},
  \[\h^a_c(Y_y^{\mb{C}^\times},(b^*\f^G_Q)^{-1}a^* \mc{E}'[\dim C']|_{Y_y^{\mb{C}^\times}})=0\text{ for $a$ odd.}\]

Combining both the result we have,
\begin{equation}\label{29'}
\sum_{a \text{ even}}\h^a_c(Y_y,(b^*{\f^G_Q})^{-1}a^* \mc{E}'[\dim C']|_{Y_y})=\sum_{a\text{ even}}\h^a_c(Y_y)^{\mb{C}^\times},(b^*\f^G_Q)^{-1}a^* \mc{E}'[\dim C']|_{(Y_y)^{\mb{C}^\times}})
\end{equation}
Now let $Q^i$'s denote the $G_0$-orbits of $Q$ in the set of all parabolic subgroups containing $\chi(\mb{C}^\times)$, for $i=1,...,b$. Define $Z^i$ as before but in terms of $Q^i$ and $C'^i$, where an element of $G$ conjugates $Q$ to $Q^i$, conjugating $C'$ by the same element gives $C'^i$ contained in $\mf{m}^i$.

\[Z^i=\{g(Q^i)_0\in G_0/(Q^i)_0| Ad(g^{-1})y\in (\pi^i)^{-1}(C')^i\}.\]
Then as before we get,
\[Y_y^{\mb{C}^\times}=\sqcup_i Z^i,\]and, 
\begin{equation}
	\h^a_c(Z^i,(b^*\f^G_Q)^{-1}a^* \mc{E}'[\dim C']|_{Z^i}  )=\h^a(\inn^\mf{g}_{\mf{q}^i}(\mc{IC}(\mathcal{O}^i,\mathcal{L}^i)[\dim C'-\dim \mc{O}^i])_y,
\end{equation} 
where $(\mc{O}^i,\mc{L}^i)\in \ms{I}(M^i)$. These  come from conjugating and restricting $(C',\mc{E}')$. Now combining equation \ref{29'} and the above result we have,\[ \sum_{a \text{ even }}\h^a_c(Y_y,(b^*\f^G_Q)^{-1}a^* \mc{E}'[\dim C']|_{Y_y})=   \sum_{i, a \text{ even}}\h^a(\inn^\mf{g}_{\mf{q}^i}(\mc{IC}(\mathcal{O}^i,\mathcal{L}^i)[\dim C'-\dim \mc{O}^i])_y.\]
Now $\h^a_c(Y_y,(b^*\f^G_Q)^{-1}a^* \mc{E}'[\dim C']|_{Y_y})$ is the same as $\h^a(\inn^G_Q\mc{IC}(C',\mc{E}'))_y$. By assumption $\mc{E}$ occurs as direct summand of  $\h^{-\dim{C}}(\Ind^G_Q\mc{IC}(C',\mc{E}'))|_C$. As $\mc{L}=\mc{E}|_\mc{O}$, therefore for some $i$ and  $a$ even,  $\mc{L}$ should appear as a direct summand of $\h^a(\inn^\mf{g}_{\mf{q}^i}(\mc{IC}(\mathcal{O}^i,\mathcal{L}^i)[\dim C'-\dim \mc{O}^i])_y$. We  call this $Q^i$ to be $P$,  $M^i$ to be $L$ and $(\mc{O}^i,\mc{L}^i)$ to be $(\mc{O}',\mc{L}')$. Hence we get the desired result that $\mc{L}$ is a direct summand of $\h^{a+\dim C'-\dim \mc{O}'}(\ind \mc{IC}(\mc{O}',\mc{L}'))$. we can call $a+\dim C'$ to be $b$.
\end{proof}

\subsection{Normal complexes}

\begin{definition}
 An $A\in D^b_{G_0}(\mf{g}_n)$ is called normal if there exists $(\mc{O},\mc{L})\in \i'$ 
	 such that some shift of $A$ is a direct summand of  $\ind(\mathcal{E} (\mathcal{O},\mathcal{L}) )$.
\end{definition}
\begin{theorem}\label{th9}
For $(\mathcal{O},\mathcal{L})\in \mathscr{I}(\mf{g}_n)$, $\mathcal{E} (\mathcal{O},\mathcal{L})$ exists and is a normal complex.	
\end{theorem}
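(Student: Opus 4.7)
Plan: I would exhibit $\mc{E}(\mc{O},\mc{L})$ as a direct summand (up to shift) of the parabolic induction of some cuspidal pair, which simultaneously yields existence and normality. The argument proceeds by induction on the semisimple rank of $G$. Fix $x\in\mc{O}$ and let $P,L,\mc{O}_L,\mc{L}'$ be the parabolic, Levi, open $L_0$-orbit in $\mf{l}_n$, and restricted local system constructed in Subsection \ref{subsec6.2}, so that $(\mc{O}_L,\mc{L}')\in\ms{I}(\mf{l}_n)$. Two cases arise, according to whether $L=G$ or $L\subsetneq G$.

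In the base case $L=G$, the orbit $\mc{O}$ is open in $\mf{g}_n$. Apply Proposition \ref{prop6.6.1} to obtain a parabolic $R\subset G$ with Levi $M$, a cuspidal pair $(\mc{O}'',\mc{L}'')\in\ms{I}(\mf{m}_n)^{\cu}$, and an integer $b$ such that $\mc{L}$ is a direct summand of $\h^{b-\dim\mc{O}''}(\inn^\mf{g}_\mf{r}\mc{IC}(\mc{O}'',\mc{L}''))|_\mc{O}$. By Corollary \ref{existance for cuspidal} together with Theorem \ref{th5}, $\mc{A}:=\inn^\mf{g}_\mf{r}\mc{E}(\mc{O}'',\mc{L}'')$ is a parity complex, which by Theorem \ref{th2.3} decomposes as $\bigoplus_j\mc{E}(X_j,\mc{N}_j)[m_j]$. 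Because $\mc{O}$ is open in $\mf{g}_n$, every indecomposable summand whose support meets $\mc{O}$ must have $X_j=\mc{O}$. Krull--Schmidt applied to the indecomposable local system $\mc{L}$ appearing in $\h^{b-\dim\mc{O}''}(\mc{A})|_\mc{O}$ then forces $\mc{L}\cong\mc{N}_{j_0}$ for some $j_0$, realizing $\mc{E}(\mc{O},\mc{L})$ as a direct summand of $\mc{A}$.

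In the inductive case $L\subsetneq G$, the inductive hypothesis gives that $\mc{E}(\mc{O}_L,\mc{L}')$ exists on $\mf{l}_n$ and is normal: some shift of it is a direct summand of $\inn^\mf{l}_{\mf{l}\cap\mf{r}}\mc{E}(\mc{O}'',\mc{L}'')$ for a parabolic $R\subset P$ of $G$ with Levi $M\subset L$ and a cuspidal pair $(\mc{O}'',\mc{L}'')\in\ms{I}(\mf{m}_n)^{\cu}$. By transitivity of induction from Section \ref{sec4}, $\inn^\mf{g}_\mf{p}\inn^\mf{l}_{\mf{l}\cap\mf{r}}\mc{E}(\mc{O}'',\mc{L}'')\cong\inn^\mf{g}_\mf{r}\mc{E}(\mc{O}'',\mc{L}'')$, and Theorem \ref{th5} makes the right-hand side parity; hence $\inn^\mf{g}_\mf{p}\mc{E}(\mc{O}_L,\mc{L}')$ is a parity summand. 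By Theorem \ref{th8} this parity complex has support $\bar{\mc{O}}$ and its restriction to $\mc{O}$ is $\mc{L}[\dim\mc{O}_L]$. Decomposing into indecomposable parity summands $\bigoplus_j\mc{E}(X_j,\mc{N}_j)[m_j]$ with $X_j\subseteq\bar{\mc{O}}$, any $X_j\subsetneq\mc{O}$ has closure disjoint from $\mc{O}$ and restricts to zero; therefore $\bigoplus_{j:X_j=\mc{O}}\mc{N}_j[\dim\mc{O}+m_j]\cong\mc{L}[\dim\mc{O}_L]$, and since $\mc{L}$ is indecomposable there is a unique $j_0$ with $\mc{N}_{j_0}\cong\mc{L}$ and $m_{j_0}=\dim\mc{O}_L-\dim\mc{O}$, showing $\mc{E}(\mc{O},\mc{L})[m_{j_0}]$ is a direct summand of $\inn^\mf{g}_\mf{r}\mc{E}(\mc{O}'',\mc{L}'')$.

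The main obstacle is in the inductive case, where one must combine the induction hypothesis, transitivity of parabolic induction, and Theorem \ref{th5} to ensure that $\inn^\mf{g}_\mf{p}\mc{E}(\mc{O}_L,\mc{L}')$ is parity (so that Theorem \ref{th2.3} can be applied), and then carry out careful bookkeeping of supports and cohomological shifts to isolate $\mc{E}(\mc{O},\mc{L})$ as the desired indecomposable summand. Once this is done, $\mc{E}(\mc{O},\mc{L})$ exists, and since it appears (up to shift) as a direct summand of $\inn^\mf{g}_\mf{r}\mc{E}(\mc{O}'',\mc{L}'')$ with $(\mc{O}'',\mc{L}'')$ cuspidal, it is normal by definition.
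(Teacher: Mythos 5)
Your proof is correct and relies on the same ingredients as the paper's: Proposition \ref{prop6.6.1}, Theorem \ref{th5}, Theorem \ref{th8}, transitivity of $\ind$, and the decomposition of a parity complex into indecomposables via Theorem \ref{th2.3}. The only structural difference is that you wrap the argument in an induction on the semisimple rank of $G$, with a base case when $L=G$ (so $\mc{O}$ is open in $\mf{g}_n$) and a recursive step that invokes the inductive hypothesis to supply existence and normality of $\mc{E}(\mc{O}_L,\mc{L}')$ on $\mf{l}_n$. The paper avoids the recursion entirely: after constructing $L$ and $(\mc{O}_L,\mc{L}')$, it applies Proposition \ref{prop6.6.1} directly to $(\mc{O}_L,\mc{L}')\in\ms{I}(\mf{l}_n)$ and uses that $\mc{O}_L$ is the \emph{open} $L_0$-orbit in $\mf{l}_n$ (Proposition \ref{nrigid}) to conclude existence of $\mc{E}(\mc{O}_L,\mc{L}')$ in one step, with no base case needed -- this is exactly your base-case argument, but run inside $\mf{l}_n$ instead of $\mf{g}_n$, and it covers $L=G$ uniformly. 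Both arguments are valid; the paper's is slightly more economical since the open-orbit observation on $\mf{l}_n$ makes the induction unnecessary, whereas your framing makes the logical dependence explicit at the cost of a case split.
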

\begin{proof}
For $(\mc{O},\mc{L})\in \ms{I}(\mf{g}_n)$, we can construct the Levi subgroup $L$ as in \ref{subsec6.1} and $(\mc{O}_L,\mc{L}')\in \ms{I}(\mf{l}_n)$ as  in \ref{subsec6.2} and by Theorem \ref{th8}, $\ind \mc{E}(\mc{O}_L, \mc{L}')|_\mc{O}=\mc{L}[\dim \mc{O}_L]$.	
Now by Proposition \ref{prop6.6.1}, there exists a parabolic subgroup $Q\subset L$ and a Levi subgroup $M\subset Q$ with $(\mc{O}',\mc{L}'')\in \ms{I}(\mf{m}_n)^{0-\cu}$ such that $\h^{b-\dim \mc{O}'}(\inn^\mf{l}_\mf{q}\mc{IC}(\mc{O}',\mc{L}'')|_{\mc{O}_L}$ contains $\mc{L}'$ as a direct summand. By Theorem \ref{th5}, $\inn^\mf{l}_\mf{q}\mc{IC}(\mc{O}',\mc{L}'')$ is parity. By  proposition \ref{nrigid}, $\mc{O}_L$ is open in $\mf{l}_n$. Combining these two above facts, we can see that $\mc{E}(\mc{O}_L,\mc{L}')$ exists and  is direct summand of $\inn^\mf{l}_\mf{q}\mc{IC}(\mc{O}',\mc{L}'')$.  Using  the fact that induction is transitive it follows, $\mc{L}[\dim \mc{O}_L]$ is direct summand of  $\ind\mc{IC}(\mc{O}',\mc{L}'')|_\mc{O}$. By Theorem \ref{th8}, support of $\ind\mc{E}(\mc{O}_L,\mc{L}')$ is $\bar{\mc{O}}$. Therefore $\mc{E}(\mc{O},\mc{L})$ exists and is direct summand of $\ind\mc{E}(\mc{O}',\mc{L}'')$.
\end{proof}

\subsection{Induction preserves parity}\label{sec8}

\begin{theorem}Let $P$ be a parabolic subgroup of $G$ with a Levi factor $L$.
For any pair $(\mathcal{O}, \mathcal{L}) \in \mathscr{I}(\mf{l}_n)$, the induction functor sends parity complexes to parity complexes.	
\end{theorem}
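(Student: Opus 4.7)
The plan is to reduce to the cuspidal case already settled in Theorem \ref{th5}. Any parity complex in $D^b_{L_0}(\mathfrak{l}_n)$ splits as a direct sum of shifts of indecomposable parity sheaves $\mathcal{E}(\mathcal{O},\mathcal{L})$ with $(\mathcal{O},\mathcal{L})\in\mathscr{I}(\mathfrak{l}_n)$. Since $\ind$ is additive and commutes with shifts, it suffices to prove that $\ind\mathcal{E}(\mathcal{O},\mathcal{L})$ is parity for each such pair.

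Fix $(\mathcal{O},\mathcal{L})\in\mathscr{I}(\mathfrak{l}_n)$, which need not be cuspidal. Applying Theorem \ref{th9} inside the Levi $L$ in place of $G$, the parity sheaf $\mathcal{E}(\mathcal{O},\mathcal{L})$ exists and is normal: there exist a parabolic $Q\subset L$, a Levi subgroup $M\subset Q$ (both containing $\chi(\mathbb{C}^\times)$), and a cuspidal pair $(\mathcal{O}',\mathcal{L}'')\in\mathscr{I}(\mathfrak{m}_n)^{\cu}$ such that (some shift of) $\mathcal{E}(\mathcal{O},\mathcal{L})$ appears as a direct summand of $\inn^{\mathfrak{l}}_{\mathfrak{q}}\mathcal{E}(\mathcal{O}',\mathcal{L}'')$.

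Now I apply $\inn^{\mathfrak{g}}_{\mathfrak{p}}$, which is additive and commutes with shifts, so $\inn^{\mathfrak{g}}_{\mathfrak{p}}\mathcal{E}(\mathcal{O},\mathcal{L})$ is (up to shift) a direct summand of $\inn^{\mathfrak{g}}_{\mathfrak{p}}\inn^{\mathfrak{l}}_{\mathfrak{q}}\mathcal{E}(\mathcal{O}',\mathcal{L}'')$. By the transitivity result of section \ref{sec4}, this composition equals $\inn^{\mathfrak{g}}_{\mathfrak{r}}\mathcal{E}(\mathcal{O}',\mathcal{L}'')$ for a parabolic $R\subset P$ with Levi factor $M$ (namely $R$ is the preimage of $Q$ under $P\twoheadrightarrow L$). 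Since $(\mathcal{O}',\mathcal{L}'')$ is cuspidal on $\mathfrak{m}_n$, Theorem \ref{th5} applied with $M\subset R\subset G$ shows that $\inn^{\mathfrak{g}}_{\mathfrak{r}}\mathcal{E}(\mathcal{O}',\mathcal{L}'')$ is parity.

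Finally, a direct summand of a parity complex is parity (this is immediate from the definition of $*$- and $!$-even/odd via stalk and costalk cohomology). This completes the reduction. There is essentially no new obstacle in this theorem: all the real work has been done in Theorem \ref{th5} (parity for induction of cuspidal pairs) and Theorem \ref{th9} (every parity sheaf is a summand of the induction of a cuspidal one), and the only ingredient tying them together is the transitivity of parabolic induction in the graded setting.
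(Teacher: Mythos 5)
Your proposal is correct and follows essentially the same route as the paper: invoke Theorem \ref{th9} inside $L$ to exhibit $\mathcal{E}(\mathcal{O},\mathcal{L})$ as a shifted summand of the induction of a cuspidal pair from a smaller Levi $M$, apply $\inn^{\mathfrak{g}}_{\mathfrak{p}}$, use transitivity to rewrite the composite as $\inn^{\mathfrak{g}}_{\mathfrak{r}}$, and conclude by Theorem \ref{th5} together with the fact that summands of parity complexes are parity. The only differences are cosmetic: you explicitly record the initial reduction from a general parity complex to indecomposable summands, and you name the intermediate parabolic $R\subset P$ rather than writing $\inn^{\mathfrak{l}}_{\mathfrak{l}\cap\mathfrak{p}}$ as the paper does.
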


\begin{proof}
By Theorem \ref{th9}, there exist a cuspidal pair $(\mathcal{C}, \mathcal{F})\in \ms{I}(\mf{m}_n)^{0-\cu}$ , where $M$ is the Levi subgroup of $L$ such that \[\inn^\mf{l}_{\mf{l}\cap \mf{p}}(\mathcal{E}(\mathcal{C},\mathcal{F}))=\mathcal{E}(\mathcal{O},\mathcal{L})[k]\oplus ... \text{, for some } k \in \mb{Z}.\] If we apply $\ind$ on  both sides and use the transitivity of induction, then we get, \[\ind(\mathcal{E}(\mathcal{C},\mathcal{F}))=\ind(\mathcal{E}(\mathcal{O},\mathcal{L}))[k]\oplus ...\]
Now the left-hand side is parity by Theorem \ref{th5}. Hence so is the right-hand side. So induction preserves the parity of $\mathcal{E}(\mathcal{O}, \mathcal{L})$.
\end{proof}

\section{Examples}\label{sec10}

\subsection{Several cases for conjecture \ref{2.6(c)} }\label{sec9}
Here, we want to calculate $\Ind$ for some classical algebraic groups and also  we want to show that conjecture \ref{2.6(c)} works well for cuspidal pairs on the Levis.

\begin{example}\textbf{$\mf{sp}_4$-case:}
Let $G=Sp_4$. The symplectic form is defined by the matrix
$B=\begin{pmatrix}
0&0&1&0\\0&0&0&1\\-1&0&0&0\\0&-1&0&0	
\end{pmatrix}$ 
and the inner product is  $Q(v,w)=v^t B w$, where $v,w\in \mb{C}^4$. $Sp_4$ is defined as the group of automorphisms $A$ from $\mb{C}^4$ to $\mb{C}^4$, such that $Q(Av,Aw)=Q(v,w)$. Here a torus is of the form $diag(t_1,t_2,t_1^{-1},t_2^{-2})$. Let $\{e_1,e_2,e_3,e_4\}$ be the standard basis of $\mb{C}^4$. Hence the root system is $\Phi=\{ \pm e_1\pm e_2, \pm2e_1,\pm2e_2\}$. 
The orthogonal complement of a vector space $V$, denoted by $V^\perp$, is the set of all vectors  having inner product $0$ with all the vectors in $V$. Clearly the orthogonal complement of $\langle e_1 \rangle$ is $\langle e_1,e_2,e_4\rangle$.
\subsubsection{Levi subgroups}
The Levi subgroups are $T,GL(2), GL(1)\times Sp(2)$, up-to conjugacy.
According to our  assumption for the characteristic $l$,  $l\ne2$.
 So from \cite{Lu3},
  we can see that $T$ and $GL(1)\times Sp(2)$ have cuspidal pairs. For $T$ the parity condition has been checked in \cite[~ 4.3]{parity}. In the  case of $GL(1)\times Sp(2)$, the cuspidal pair is of the form $(\mc{O}_{prin},\mc{L})$, where $\mc{O}_{prin}$ is the $Sp_2$-principal orbit in $\mf{sp}(2)=\mf{sl}_2$ and $\mc{L}$ is the nontrivial $SL(2)$-equivariant local system on $\mc{O}_{prin}$.
The Levi $GL(1)\times Sp(2)$ comes from the root $\al=2e_2$. The Levi and  nilpotent  subalgebras related to the root $\al=2e_2$ are of the form
\[\mf{l}=\{\begin{pmatrix}
	a&0&0&0\\0&b&0&c\\0&0&-a&0\\0&d&0&-b\\
\end{pmatrix}| \hspace{2mm}a,b,c,d\in \mb{C}\}\cong \mf{sl}_2\times \mb{C},
\]and
\[\mf{u}_P=\{\begin{pmatrix}
0&x&z&y\\0&0&y&0\\0&0&0&0\\0&0&-x&0\\	
\end{pmatrix}|\hspace{2mm}x,y,z \in\mb{C}\}.
\]Hence, $\mf{p}=\{\begin{pmatrix}
	a&x&z&y\\0&b&y&c\\0&0&-a&0\\0&d&-x&-b\\
\end{pmatrix}|\hspace{2mm}a,b,c,d,x,y,z\in\mb{C}\}
$.
 Now we want to calculate $\Ind$ with respect to these parabolic and Levi. 
 Recall the parabolic induction diagram,
 \[\mc{N}_L\xleftarrow{\pi} \mc{N}_L+\mf{u}_P\xrightarrow{e}G\times^P(\mc{N}_L+\mf{u}_P)\xrightarrow{\mu}\mc{N}_G.\]   
The crucial step is to calculate the push forward of the map $\mu$. We can interpret the space $G\times^P(\mc{N}_L+\mf{u}_P)$ in a different way,
\begin{align*}
G\times^P{(\mc{N}_L+\mf{u}_P)}&\cong \{(gP,x)\in G/P\times \mc{N}_G| Ad(g^{-1})x \in \mc{N}_L+\mf{u}_P\}\text{, by  }(g,x)\to (gP,Ad(g)x).\\ &\cong \{(gP,x)\in G/P\times \mc{N}_G |Ad(g^{-1})x \in Lie(P)\}\text{, as $x\in \mc{N}_G$,  $Ad(g^{-1})x\in \mc{N}_G$.}
\end{align*}
 $Ad(g^{-1})x\in \mf{p}$ means it preserves the partial flag $\langle e_1\rangle\subset\langle e_1,e_2,e_4\rangle$,  call it $E$. This implies  that $x$ preserves $gE$. Hence the definition becomes
 
\begin{align}
 G\times^P(\mc{N}_L+\mf{u}_P)=\left\{(V_1\subset V_3,x) \middle |\
 \begin{aligned}
& V_1\subset V_3\text{ is a partial flag of dimension $1$ and $3$},\\
& V_1^\perp=V_3, x\in \mc{N}_G \text{ preserves }V_3 \text{ and }V_1
\end{aligned} \right\}.
\end{align}

 Now the map $\mu$ becomes  projection on the second coordinate.
 By \cite[~5.2]{CM}, we can find the orbits in $\mf{sp}_4$, which are $\mc{O}[4],\mc{O}[2,1^2],\mc{O}[2^2],\mc{O}[1^4]$. The representatives from these orbits are,
 \[
 \begin{pmatrix}
	0&1&0&0\\0&0&0&1\\0&0&0&0\\0&0&-1&0\\
\end{pmatrix}, \begin{pmatrix}
0&0&1&0\\0&0&0&0\\0&0&0&0\\0&0&0&0\\	
\end{pmatrix},
\begin{pmatrix}
0&0&1&0\\0&0&0&1\\0&0&0&0\\0&0&0&0\\	
\end{pmatrix}, \{0\},\]
respectively. Now we are interested in the fibers of the representatives of each orbits. For each orbit as above,  we call $x$ to be the representative.
\begin{lemma}
For $x$ defined above, $\mu^{-1}(x)$ has the following descriptions,
\begin{enumerate}
\item for $x\in \mc{O}[2,1^2]$, 	 $\mu^{-1}(x)\cong \mb{P}^2$,
\item for $x\in \mc{O}[2^2]$,  $\mu^{-1}(x)\cong \mb{P}^1$,
\item for $x\in \mc{O}[4]$,  $\mu^{-1}(x)\cong \{pt\}$,
\item for $x\in \mc{O}[1^4]$,  $\mu^{-1}(x)\cong G/P$.
\end{enumerate}
	
\end{lemma}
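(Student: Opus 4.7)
The plan is to use the description of $G\times^P(\mc{N}_L+\mf{u}_P)$ established just above the lemma, which identifies the fiber as
\[
\mu^{-1}(x) = \{(V_1,V_3) \mid V_1\subset V_3 = V_1^\perp,\ xV_1\subset V_1,\ xV_3\subset V_3\}.
\]
Since the symplectic form is alternating, every line in $\mb{C}^4$ is isotropic, so the condition $V_1\subset V_1^\perp$ holds automatically and $V_3 = V_1^\perp$ is uniquely determined by $V_1$. Hence the fiber is parameterised by lines $V_1\subset\mb{C}^4$ satisfying the two $x$-invariance conditions.

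The key reduction is that these two conditions collapse to the single condition $V_1\subset\ker x$. Indeed, $x$ is nilpotent, so $xV_1\subset V_1$ with $V_1$ one-dimensional forces $xv_0=0$ for any generator $v_0$ of $V_1$, that is, $V_1\subset\ker x$. Conversely, assuming $V_1\subset\ker x$, the relation $x\in\mf{sp}_4$ gives $Q(xu,w)+Q(u,xw)=0$, so for any $w\in V_1^\perp$,
\[
Q(v_0,xw) = -Q(xv_0,w) = 0,
\]
whence $xw\in V_1^\perp$ automatically. Thus $\mu^{-1}(x)\cong \mb{P}(\ker x)$.

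With this in hand the lemma reduces to computing $\dim\ker x$ for each of the four orbit representatives, which is immediate from the Jordan type: one reads off $\dim\ker x = 1,2,3,4$ for the partitions $[4],[2^2],[2,1^2],[1^4]$, giving $\{pt\}, \mb{P}^1, \mb{P}^2, \mb{P}^3$ respectively. For the zero orbit the identification $\mb{P}^3\cong G/P$ comes from the fact that $G/P$ parameterises isotropic lines in $\mb{C}^4$, which in the symplectic case is all of $\mb{P}^3$. The only conceptually non-trivial step is the automatic invariance of $V_1^\perp$, which genuinely uses that $x$ lies in $\mf{sp}_4$; everything else is a direct computation with nilpotent Jordan matrices.
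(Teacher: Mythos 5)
Your proof is correct, and it takes a genuinely different route from the paper. The paper treats each orbit separately with explicit matrix computations: for $[2,1^2]$ and $[2^2]$ it shows that the generator of $V_1$ must lie in $\ker x$ by examining $x \cdot v$ coordinate by coordinate, and for $[4]$ and $[1^4]$ it simply states the answers. You instead derive a single uniform formula $\mu^{-1}(x) \cong \mb{P}(\ker x)$ valid for every nilpotent $x$, from which all four cases are read off at once. The key new observation in your argument is that the $x$-invariance of $V_3 = V_1^\perp$ follows automatically from $V_1 \subset \ker x$ together with the fact that $x \in \mf{sp}_4$ (via $Q(v_0,xw) = -Q(xv_0,w) = 0$). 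This step is worth highlighting: the paper verifies that $V_1 \subset \ker x$ is necessary but never explicitly checks that the induced $V_3$ is $x$-stable, so your argument both cleans up and completes the sufficiency direction. The uniform formula also makes transparent why the fibers are projective spaces of exactly those dimensions, something the paper's case analysis obscures. One minor point worth stating explicitly when invoking $\mb{P}(\ker x) \cong G/P$ for $x = 0$: since the symplectic form is alternating, every line is isotropic, and $Sp_4$ acts transitively on lines, so $G/P \cong \mb{P}^3$; you do note this, and it is correct.
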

\begin{proof}
\begin{enumerate}
	\item If $x\in \mc{O}[2,1^2]$, then $\ker{x}=\langle e_1,e_2,e_4\rangle$. 
	 For a flag $V_1\subset V_3$ in $G\times^P(\mc{N}_L+\mf{u}_P)$,  $x$ stables both $V_3$ and $V_1$. So if $\langle v\rangle=V_1$, then ${v}$ should either go to $0$  or to some scalar multiplication of $v$ under  the map $x$. Let $v=ae_1+be_2+ce_3+de_4$, then $x.v=ce_1$. If $c=0$, then $x.v=0$ which means  $v\in ker{x}$. If $c\ne 0$ then $x.v\in V_1$, so $b=c=d=0$, a contradiction. Hence $V_1\subset \ker{x}$ which is $3$ dimensional. Once we choose $V_1$, $V_3$ is automatically determined by the condition $V_1^\perp=V_3$. Hence $\mu^{-1}(x)\cong \mb{P}^2$.

\item
Now for $\mc{O}[2^2]$, to find the fiber  we will proceed as before. For $\mc{O}[2^2]$, $\ker{x}=\langle e_1,e_2\rangle$. If $v$ is the generator of $V_1$ then $x.v$ is either $0$ or in $\langle v \rangle$. Now if $v=ae_1+be_2+ce_3+de_4$, then $x.v=ce_1+de_2$. Hence either $c=d=0$  implying $v\in \ker{x}$, otherwise, $ce_1+de_2=\lambda v$  implying  $c=d=0$, a contradiction. Therefore, $V_1\subset \ker{x}$ which is two-dimensional. Hence in this case $\mu^{-1}(x)\cong \mb{P}^1$.
\item
For $\mc{O}[4]$, we can check that $\langle e_1\rangle\subset \langle e_2,e_1,e_4\rangle $ is the only flag which satisfies all the conditions to be in the inverse image, so the fiber is just a point. 
\item
 For $\mc{O}[1^4]$ the fiber is the whole space $G/P$.

\end{enumerate}
\end{proof}
We can find the dimension and the fundamental groups of the orbits from \cite{CM}. Now as $L\cong SL_2\times \mb{C}^\times$, hence the orbits in $L$ are $\mc{O}_{prin}$ and $\{0\}$. We want to calculate  $\mu^{-1}(x)\cap G\times^P(\mc{O}_0+\mf{u}_P)$ and $\mu^{-1}(x)\cap G\times^P(\mc{O}_{prin}+\mf{u}_P)$ for each representative $x$. 

\begin{lemma}\label{lm12}
	Let $x\in \mc{N}_L+\mf{u}_P$. Then,
	\begin{enumerate}
		\item $x\in \mc{O}_{prin}+\mf{u}_P$ if and only if the action of $x$ on $\langle e_1,e_2,e_4\rangle/\langle e_1\rangle$ is nonzero,
		\item $x\in \mc{O}_0+\mf{u}_P$ if and only if the action of $x$ on $\langle e_1,e_2,e_4\rangle/\langle e_1\rangle$ is zero.
	\end{enumerate}
\end{lemma}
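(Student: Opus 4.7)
The plan is to reduce everything to the direct matrix observation that $\mf{u}_P$ acts trivially on the quotient space $V_3/V_1$, where $V_3=\langle e_1,e_2,e_4\rangle$ and $V_1=\langle e_1\rangle$. Writing $x=\ell+u$ with $\ell\in\mc{N}_L$ and $u\in\mf{u}_P$, and noting that $V_1$ and $V_3$ are both preserved by all of $\mf{p}$, the induced action of $x$ on $V_3/V_1$ is well-defined. My first step would be to verify, directly from the matrix description of $\mf{u}_P$ displayed above, that $u\cdot V_3\subseteq V_1$ for every $u\in\mf{u}_P$: this is immediate since the only nonzero entries of $u$ sending $e_2$ or $e_4$ land in the $e_1$-coordinate. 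Consequently $u$ acts as zero on $V_3/V_1$, and the induced action of $x$ on $V_3/V_1$ agrees with that of $\ell$.

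The second step is to identify the action of $\ell\in\mc{N}_L$ on $V_3/V_1$. From the block description of $\mf{l}$, such an $\ell$ has diagonal $\mf{gl}_1$-part $a$ acting on $e_1$ and an $\mf{sl}_2$-block $\bigl(\begin{smallmatrix} b & c \\ d & -b \end{smallmatrix}\bigr)$ acting on $\langle e_2,e_4\rangle$. Nilpotency of $\ell$ as an element of $\mf{l}\cong\mf{sl}_2\oplus\mb{C}$ forces $a=0$, and the induced action of $\ell$ on the basis $\bar{e}_2,\bar{e}_4$ of $V_3/V_1$ is then given precisely by this $\mf{sl}_2$-block. Since $\mc{N}_L$ decomposes as $\{0\}\times\mc{N}_{\mf{sl}_2}$ and contains only the two orbits $\mc{O}_0=\{0\}$ and $\mc{O}_{prin}$ (the nonzero nilpotent $SL_2$-orbit), the induced action on the quotient vanishes iff $\ell=0$ and is nonzero iff $\ell\in\mc{O}_{prin}$. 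This yields both directions of both parts of the lemma simultaneously.

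There is no real obstacle here: the statement is a direct translation between the orbit structure on $\mc{N}_L$ and the geometry of the partial flag $V_1\subset V_3$, carried out by inspecting the matrix forms already displayed in the excerpt. The only care needed is to observe that $\mc{O}_0$ and $\mc{O}_{prin}$ exhaust $\mc{N}_L$, so the two cases are complementary and the equivalences in (1) and (2) are consistent with each other.
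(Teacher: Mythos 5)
Your proof is correct and is precisely the ``simple matrix calculations'' the paper alludes to without spelling out: you verify directly from the displayed form of $\mf{u}_P$ that $u\cdot e_2=xe_1$ and $u\cdot e_4=ye_1$, so $\mf{u}_P$ kills $V_3/V_1$, and then read off that the induced action of $\ell\in\mc{N}_L$ on $V_3/V_1$ is the $\mf{sl}_2$-block, which vanishes exactly when $\ell=0$ since $\mc{N}_L=\{0\}\cup\mc{O}_{prin}$. Nothing is missing; this is the intended argument.
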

The proof follows from some simple matrix calculations.
	

Thus from the above lemma and the definition that we gave in the beginning, \[G\times^P(\mc{O}_0+\mf{u}_P)=\left\{(gP,x)\in G/P\times \mc{N}_G\middle| \hspace{2mm}
\begin{aligned}
& Ad(g^{-1})x\text{ preserves the flag }\langle e_1 \rangle \subset \langle e_1,e_2,e_3\rangle \\ & \text{ and }Ad(g^{-1})x\text{ is zero on  }\langle e_1,e_2,e_3\rangle/\langle e_1 \rangle
\end{aligned}\right\}.\]Which is same as,

\[G\times^P(\mc{O}_0+\mf{u}_P)=\left\{(gP,x)\in G/P\times \mc{N}_G\middle| \hspace{2mm}
\begin{aligned}
& x\text{ preserves the flag }g.\langle e_1 \rangle \subset g.\langle e_1,e_2,e_3\rangle \\ & \text{ and }x\text{ is zero on  }g.\langle e_1,e_2,e_3\rangle/g.\langle e_1 \rangle
\end{aligned}\right\}.\]
Which is again same as,\[
\left\{(V_1\subset V_3,x) \middle |\
 \begin{aligned}
& V_1\subset V_3\text{ is a partial flag of dimension $1$ and $3$},\\
& V_1^\perp=V_3, x\in \mc{N}_G \text{ preserves }V_3 \text{ and }V_1\text{ with }x \text{ is }0\text{ on }V_3/V_1 
\end{aligned} \right\}.
\]

Therefore for each representative $x$,  \[\mu^{-1}(x)\cap G\times^P(\mc{O}_0+\mf{u}_P)= \left\{V_1\subset V_3 \middle |\
 \begin{aligned}
& V_1\subset V_3\text{ is a partial flag of dimension $1$ and $3$},\\
& V_1^\perp=V_3, x \text{ preserves }V_3 \text{ and }V_1\text{ with }x \text{ is }0\text{ on }V_3/V_1 
\end{aligned} \right\}.
\] 

\begin{lemma}
For $x$ being the representative of each orbit, $\mu^{-1}(x)\cap G\times^P(\mc{O}_0+\mf{u}_P)$ satisfies the fifth column in  the table given below.
\end{lemma}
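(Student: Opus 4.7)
The plan is to reduce the problem to an explicit case analysis in terms of pairs $(V_1\subset V_3,x)$. Combining the parametrization
\[
G\times^P(\mc{N}_L+\mf{u}_P)\;\cong\;\left\{(V_1\subset V_3,x)\,:\,V_1^\perp=V_3,\ x\in\mc{N}_G\text{ preserves }V_1\text{ and }V_3\right\}
\]
given in the subsection with Lemma \ref{lm12}, the condition $Ad(g^{-1})x\in\mc{O}_0+\mf{u}_P$ translates into the requirement that $x$ vanishes on $V_3/V_1$, i.e.\ $x(V_3)\subset V_1$. Since $x$ is nilpotent, $x|_{V_1}$ has only the eigenvalue $0$, so preservation of $V_1$ is automatic from $V_1\subset\ker x$; moreover $V_1\subset V_1^\perp=V_3$ is automatic because every line in a symplectic space is isotropic. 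Thus the fibre we must compute is
\[
\mu^{-1}(x)\cap G\times^P(\mc{O}_0+\mf{u}_P)\;=\;\bigl\{V_1\subset\mb{C}^4\,:\,\dim V_1=1,\ V_1\subset\ker x,\ x(V_1^\perp)\subset V_1\bigr\},
\]
with $V_3$ recovered as $V_1^\perp$.

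\noindent With this description in hand I would handle the four orbit representatives separately, using the explicit formula $Q(v,w)=v_1w_3+v_2w_4-v_3w_1-v_4w_2$ for the symplectic form. For $x\in\mc{O}[1^4]$ the conditions are vacuous and the intersection is all of $G/P$. For $x\in\mc{O}[4]$ one computes $\ker x=\langle e_1\rangle$, forcing $V_1=\langle e_1\rangle$ and $V_3=\langle e_1,e_2,e_4\rangle$; a direct check shows $x(V_3)=\langle e_1,e_2\rangle\not\subset V_1$, so the intersection is empty. For $x\in\mc{O}[2^2]$ with $x(w)=w_3e_1+w_4e_2$, one parametrises $V_1=\langle ae_1+be_2\rangle\subset\ker x=\langle e_1,e_2\rangle$, notes that $V_3=V_1^\perp=\{aw_3+bw_4=0\}$, computes $x(V_3)=\langle -be_1+ae_2\rangle$, and imposes proportionality $(-b,a)\parallel(a,b)$, which gives $a^2+b^2=0$; over $\mb C$ this produces two points in $\mb{P}^1$.

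\noindent The intermediate orbit $\mc{O}[2,1^2]$ is where the key cut-down happens and is the most delicate case. Here $\ker x=\langle e_1,e_2,e_4\rangle$ is three-dimensional, so the original fibre is $\mb{P}^2$. Writing $V_1=\langle ae_1+be_2+ce_4\rangle$, one shows that $x(V_3)$ equals $0$ when $(b,c)=(0,0)$ and equals $\langle e_1\rangle$ otherwise; imposing $x(V_3)\subset V_1$ therefore singles out $V_1=\langle e_1\rangle$ as the unique flag. I would then tabulate the four answers ($G/P$, a single point, two points, and $\emptyset$) and observe that they match the fifth column of the stated table.

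\noindent The main obstacle is not conceptual but combinatorial: it is the $\mc{O}[2,1^2]$ case, where the naive fibre $\mb{P}^2$ must be cut down to a single point by the additional $\mc{O}_0$--condition, and where one has to distinguish between the degenerate sublocus $(b,c)=(0,0)$ (which automatically satisfies the condition) and the generic sublocus (which never does). Once this bookkeeping is settled, the remaining three cases are immediate from the description of $\ker x$ and $x(V_1^\perp)$.
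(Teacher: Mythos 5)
Your proposal is correct and follows essentially the same approach as the paper: both reduce the fibre to the set of isotropic lines $V_1\subset\ker x$ with $x(V_1^\perp)\subset V_1$ and then carry out the same explicit case-by-case linear-algebra check for each orbit representative, arriving at the same quadratic $a^2+b^2=0$ for $\mc{O}[2^2]$ and the same forced $V_1=\langle e_1\rangle$ for $\mc{O}[2,1^2]$. Your reformulation of the condition ``$x$ is zero on $V_3/V_1$'' directly as $x(V_1^\perp)\subset V_1$ is a marginally cleaner way to organize the bookkeeping than the paper's casework on whether the second generator maps to $0$ or to $\langle v_1\rangle$, but the substance of the computation is identical.
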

\begin{proof}
\begin{enumerate}

\item
For $\mc{O}[1^4]$, it is not hard to see, $\mu^{-1}(x)\cap G\times^P(\mc{O}_0+\mf{u}_P)=G/P$.

\item For $\mc{O}[2,1^2]$,
 we have already  seen that if $\langle v_1\rangle\subset \langle v_1,v_2,v_3\rangle$ is in $\mu^{-1}(x)$, then $v_1\in \ker{x}=\langle e_1,e_2,e_4\rangle$. As $x$ is zero on $\langle v_1,v_2,v_3\rangle/\langle v_1\rangle$ that means  $v_2$ and $v_3$ should either go to $0$ or to $\langle v_1\rangle$. If both go to $0$, that is both are in the kernel, then $\langle v_1,v_2,v_3\rangle=\langle e_1,e_2,e_4\rangle$. Using the condition $V_1^\perp=V_3$, definitely $V_1=\langle e_1 \rangle$. 
 If one of them goes to $v_1$, let's say $v_2$. But then $x.v_2$ is some scalar multiplication of $e_1$ as $Im(x)=\langle e_1 \rangle$. This implies $\langle v_1 \rangle=\langle e_1 \rangle$. Again using the fact that $\langle v_1\rangle^\perp= \langle v_1,v_2,v_3\rangle$, we can see,  $\langle v_1,v_2,v_3\rangle=\langle e_1,e_2,e_4\rangle$. Hence $\mu^{-1}(x)\cap G\times^P(\mc{O}_0+\mf{u}_P)$ is a single point $\{\langle e_1 \rangle \subset \langle e_1,e_2,e_4 \rangle\}$.
\item
Now for $\mc{O}[2^2]$, again $\langle v_1 \rangle \subset \ker{x}$. But here $\ker{x}=\{e_1,e_2\}$. This means $v_1$ is of the form  $ae_1+be_2$. If $a=0$ that means  $\langle v_1 \rangle=\langle e_2 \rangle $ and $v_1^\perp=\langle e_2,e_1,e_3\rangle$. But $e_3$ goes to $e_1$ under $x$, so the map does not induce a zero map on the quotient  $\langle v_1,v_2,v_3 \rangle / \langle v_1 \rangle$. Similarly if $b=0$ then   $\langle v_1 \rangle=\langle e_1 \rangle $, therefore  $v_1^\perp=\langle e_2,e_1,e_4\rangle$. But $e_4$ goes to $e_2$ under $x$, so the map does not induce a $0$ map on the quotient  $\langle v_1,v_2,v_3 \rangle / \langle v_1 \rangle$. Now we consider the case where both $a$ and $b$ are non-zero.    We have  $\langle v_1 \rangle^\perp=\langle e_1 , e_2, be_3-ae_4 \rangle$. Now under the map $x$, $e_1,e_2$ both goes to $0$ but $be_3-ae_4$ goes to $be_1-ae_2$. The action of $x$ should be zero on the quotient, that means  $be_1-ae_2$ must be a scalar multiplication of $v_1$. This implies $b^2+a^2=0 $ or $a=\pm ib$. Therefore the flags that satisfy all the conditions to be in $\mu^{-1}(x)\cap G\times^P(\mc{O}_0+\mf{u}_P)$ are $\langle ie_1+e_2 \rangle \subset \langle e_1,e_2,e_3-ie_4 \rangle$ and  $\langle -ie_1+e_2 \rangle \subset \langle e_1,e_2,e_3+ie_4 \rangle$.

\item For $\mc{O}[4]$, $\langle e_1\rangle\subset \langle e_2,e_1,e_4\rangle$ is the only flag in the inverse image but it does not satisfy this condition hence $\mu^{-1}(x)\cap G\times^P(\mc{O}_0+\mf{u}_P)=\emptyset$.

\end{enumerate}
\end{proof}

\FloatBarrier
\vspace{\baselineskip}
\begin{table}[h!]
\caption{Orbits in $\mf{sp}_4$}
\begin{center}
\begin{tabular}{|c|c|c|c|c|c|}
\hline
orbits:& $\mc{O}[4]$ & $\mc{O}[2^2]$ & $\mc{O}[2,1^2]$ & $\mc{O}[1^4]$ \\ \hline
$\dim:$ & 8&6&4&0\\
\hline
$\pi_1:$& $\mb{Z}/2$ &    $\mb{Z}/2$ &   $\mb{Z}/2$ & 0\\
\hline
$\mu^{-1}(x): $& $\{pt\}$& $\mb{P}^1$& $\mb{P}^2$ & $G/P$\\
\hline
$\mu^{-1}(x)\cap G\times^P(\mc{O}_0+\mf{u}_P):$&$\emptyset$&$\{pt\}\sqcup \{pt\}$&$\{pt\}$&$G/P$\\
\hline
$\mu^{-1}(x)\cap G\times^P(\mc{O}_{prin} +\mf{u}_P):$ & $\{pt\} $& $\mb{A}^1-\{pt\}$ & $\mb{P}^2-\{pt\}$ & $\emptyset$ 
\\
\hline
\end{tabular}
\end{center}
\end{table}
\vspace{\baselineskip}
\FloatBarrier

We are now ready to calculate $\Ind^G_P$ for cuspidal pairs. 
 The only cuspidal pair on $SL_2$ is $(\mc{O}_{prin},\mc{L})$, where $\mc{L}$ is the nontrivial local system on $\mc{O}_{prin}$.
 Now recall the parabolic induction diagram for cuspidal pair defined in \ref{subsec5.2}. As $(\mc{O}_{prin},\mc{L})$ is cuspidal, so $\inn^G_P\mc{IC}(\mc{O}_{prin},\mc{L})=c_!(b^*\f^G_P)^{-1}a^*\mc{L}[\dim \mc{O}_{prin}]= c_!(b^*\f^G_P)^{-1}a^*\mc{L} [2]$. As we know pull-back of some local system is again a local system, thus   $(b^*\f^G_P)^{-1}a^*\mc{L} [2]$ is a local system on $G\times^P(\mc{O}_{prin}+\mf{u}_P)$. 
 \[\begin{tikzcd}
G\times^P (\mathcal{O}_{prin}+\mathfrak{u}_P) \arrow[r, "c"]                                & \mathcal{N}_G     \\
G\times^P(\mathcal{O}_{prin}+\mathfrak{u}_P)\cap \mu^{-1}(x) \arrow[r, "c"] \arrow[u, hook] & x \arrow[u, hook]
\end{tikzcd}\]
 Using the above diagram,  $\Ind^G_P\mc{IC}(\mc{O}_{prin},\mc{L})_x$  becomes the $!$-pushforward of a local system on  $G\times^P(\mc{O}_{prin}+\mf{u}_P)\cap \mu^{-1}(x)$ by a constant map. From the table above,  $G\times^P(\mc{O}_{prin}+\mf{u}_P)\cap \mu^{-1}(x)$ is simply connected for $\mc{O}[4]$, $\mc{O}[2,1^2]$ and $\mc{O}[1^4]$. A local system on a simply connected space is constant sheaf. Therefore for these orbits, each stalk of   $\Ind^G_P\mc{IC}(\mc{O}_{prin},\mc{L})$ is the cohomology  of  $G\times^P(\mc{O}_{prin}+\mf{u}_P)\cap \mu^{-1}(x)$. But for $\mc{O}[2^2]$, 
 $G\times^P(\mc{O}_{prin}+\mf{u}_P)\cap \mu^{-1}(x)$ is $\mb{A}^1-\{pt\}$, which is not simply connected. Here we will abuse the notation  little bit, both the representative of $\mc{O}[2^2]$  and its image under the projection $\mc{N}_P\to \mc{N}_L$ will be called $x$. Recall we started with a nontrivial $L$-equivariant local system on $\mc{O}_{prin}$. The projection $\pi:\mc{O}_{prin}+\mf{u}_P \to \mc{O}_{prin}$ is a trivial vector bundle, hence induces isomorphism of the equivariant fundamental groups. The inclusion $\mc{O}_{prin}+\mf{u}_P \hookrightarrow G\times ^P(\mc{O}_{prin}+\mf{u}_P)$ induces isomorphism on the equivariant fundamental groups via induction equivalence.  So the pullback of the local system we started with is  still a nontrivial local system on $G\times ^P(\mc{O}_{prin}+\mf{u}_P)$. Let 
 \[S=\{\begin{pmatrix}
A &	0\\ 0& A
\end{pmatrix}| \hspace{2mm} A=\begin{pmatrix}
a&b\\-b&a	
\end{pmatrix}, a,b\in \mb{C}
, a^2+b^2=1 \}.\] It is not hard to check $S\subset G^x$. If we choose a flag in $\mu^{-1}(x)\cap G\times^P(\mc{O}_{prin}+\mf{u}_P)$, say $F=\langle e_1 \rangle \subset e_1, e_2, e_4 \rangle$ then we can see the elements of $S$ that fix the flag $F$ are $\{\begin{pmatrix}
Id& 0\\ 0& Id	
\end{pmatrix}, \begin{pmatrix}
 -Id &0 \\ 0& -Id	
 \end{pmatrix}\}$. Therefore,
 
 \[S^F/(S^F)^\circ = \{\begin{pmatrix}
Id& 0\\ 0& Id	
\end{pmatrix}, \begin{pmatrix}
 -Id &0 \\ 0& -Id	
 \end{pmatrix}\}\cong \mb{Z}/2\mb{Z}.\] Now we aim to  show $S^F/(S^F)^\circ \cong L^x/(L^x)^\circ$.  It is not hard to see \[L^x \cong \{\begin{pmatrix}
a&&&\\&b&&\\&&a^{-1}&\\&&&b	
\end{pmatrix}| \hspace{2mm} a\in \mb{C}, b=\pm 1
\}\cong G_m\times\{\pm 1\}.\]Hence $L^x/(L^x)^\circ\cong \mb{Z}/2\mb{Z}$ and the map $S^F/(S^F)^\circ \to L^x/(L^x)^\circ$ is an isomorphism. Therefore $(b^*\f^G_P)^{-1}a^*\mc{L} [2]|_{\mu^{-1}(x)\cap G\times^P(\mc{O}_{prin}+\mf{u}_P)}$ is  a nontrivial local system. 

For  a connected, locally contractible space $X$ if  the universal cover is contractible, then the inclusion functor $\lc(X,\Bbbk) \to Sh(X,\Bbbk)$ induces an equivalence of categories, 
 \[D^b\lc(X,\Bbbk) \to D^b_{loc}(X,\Bbbk).\] 
 
 It can be proved by a minor variation on the proof that the (co)homology of an Eilenberg-MacLane space is isomorphic to group (co)homology \cite[Prop ~II.4.1]{Bro}.
 For $X=\mb{A}^1-\{pt\}$, $\lc(X,\Bbbk)\cong \Bbbk[\pi_1(\mb{A}^1-\{pt\})]-mod$, which is same as $\Bbbk[\mb{Z}]-mod\cong \Bbbk[T,T^{-1}]-mod$. Therefore, for the local system $(b^*\f^G_P)^{-1}a^*\mc{L} [2] |_{\mu^{-1}(x)\cap G\times^P(\mc{O}_{prin}+\mf{u}_P)} $, there exists a $\Bbbk[T,T^{-1}]$ module $M$ on which $T$ acts by $(-1)$. To calculate the cohomology of this local system is same as calculating $R\Hom(\Bbbk, M)$. Now,\[\xrightarrow{}\Bbbk[T,T^{-1}]\xrightarrow{\times(T-1)} \Bbbk[T,T^{-1}] \xrightarrow{T \to 1} \Bbbk \hspace{2mm}\]
is a projective resolution of $\Bbbk$. Applying  $\Hom( ,M)$ we get, $M$ in degree $0$ and $1$.
\[\to 0\to M\xrightarrow{\times(-2)} M \to 0 \to.\]Multiplying by $-2$ induces isomorphism. Hence $R\Hom(\Bbbk,M)$ is $0$ in every degree.

\FloatBarrier
 \begin{table}[h!]
 \caption{Stalks of $\Ind^G_P\mc{IC}(\mc{O}_{prin},\mc{L})$}
\begin{center}
\begin{tabular}{|c|c|c|c|c|}
\hline
 $\dim$& $\mc{O}[4]$& $\mc{O}[2^2]$ & $\mc{O}[2,1^2]$ & $\mc{O}[1^4]$\\
 $0$&&&&\\$-1$&&&&\\$-2$&&&rank 1&\\$-3$&&&&\\$-4$&&&rank 1&\\$-5$&&&&\\$-6$&&&&\\$-7$&&&&\\$-8$&&&&\\$-9$&&&&\\$-10$& rank 1&&&\\
 \hline
\end{tabular}	
\end{center}
\end{table}
 \FloatBarrier

 Hence the parity condition of Conjecture \ref{2.6(c)} is satisfied.
 
\end{example}
\begin{example}\textbf{$\mf{sl}_4$-case:}
 Let $G=SL_4$. First we talk about the Levi subgroups of $G$ and find out which of them have cuspidal pairs. 
 The conjugacy classes of proper Levis are of the form
 \[
 S(GL_3\times GL_1),S(GL_2\times GL_2), \\
 S(GL_2\times GL_1 \times GL_1), T.
 \]Here $S(GL_m\times GL_n)=\{\begin{pmatrix}
A&0\\0&B \end{pmatrix} | \hspace{2mm} A\in GL_m, B\in GL_n, \det(A)\det(B)=1	\}$.
  According to the discussion in 6.2 and Theorem 6.3 in \cite{AJHR3}, we can see  cuspidal pair only appears for $S(GL_2\times GL_2)$ and is of the form $( \mc{O}_{prin} \times \mc{O}_{prin}, \mc{L}\boxtimes \mc{L})$. Here each $\mc{L}$ is a rank one $SL(2)$-equivariant local system on  $\mc{O}_{prin}$ and   $\mc{O}_{prin}$  is the $SL_2$-principle nilpotent orbit in $\mf{sl}_2$. 
 
 For $\mf{sl}_4$, the root system is $\Phi=\{e_i-e_j| i\ne j,1\leq i,j\leq 4\}$. The parabolic subgroup associated to $\{e_1-e_2,e_3-e_4\}$ is of the form,
 $\begin{pmatrix}
 \ast &\ast &\ast & \ast \\	 \ast &\ast &\ast & \ast \\	&&\ast &\ast \\&&\ast & \ast \\
 \end{pmatrix}$. The Levi subgroup is then $S(GL_2\times GL_2)$  and the unipotent radical is of the form,
 $\begin{pmatrix}
 &&\ast &\ast\\&& \ast & \ast\\&&&\\&&&\\
 \end{pmatrix}$. Now the generators of the nilpotent orbits come from the Jordan block of size depending on the partition. Hence the representatives of $\mc{O}[4],\mc{O}[3,1],\mc{O}[2^2],\mc{O}[2,1^2],\mc{O}[1^4]$ are respectively,
 \[\begin{pmatrix}
0&1&0&0\\0&0&1&0\\0&0&0&1\\0&0&0&0\\	
\end{pmatrix},
\begin{pmatrix}
0&1&0&0\\0&0&1&0\\0&0&0&0\\0&0&0&0\\	
\end{pmatrix},
\begin{pmatrix}
0&1&0&0\\0&0&0&0\\0&0&0&1\\0&0&0&0\\	
\end{pmatrix},
\begin{pmatrix}
0&1&0&0\\0&0&0&0\\0&0&0&0\\0&0&0&0\\	
\end{pmatrix}, \{0\}.
\]
Now we calculate $\mu^{-1}(x)$ for each $x$ as we did for $\mf{sp}_4$.  Here again, \[G\times^P(\mc{N}_L+\mf{u}_P)=\{(gP,x)\in G/P\times \mc{N}_G|Ad(g^{-1})x\in \lie(P)\}.\]
But $Ad(g^{-1})x\in\mf{p}$ means it preserves the two dimensional subspace $\langle e_1, e_2 \rangle$. Hence, 
\[
G\times^P(\mc{N}_L+\mf{u}_P)=\{(H,x)| \hspace{2mm}x\in \mc{N}_G,   \hspace{1mm}H \text{ is a two dimensional subspace preserved by $x$}\},
\]  
\begin{lemma}
Let $x\in \mc{N}_L+\mf{u}_P$. Then, $x\in \mc{O}_{prin}\times \mc{O}_{prin}+\mf{u}_P$ if and only if $x|_{\langle e_1, e_2\rangle}\ne 0$ and $x|_{\mb{C}^4/{\langle e_1, e_2 \rangle}}\ne 0$.	
	\end{lemma}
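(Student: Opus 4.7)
The plan is to reduce the statement to an elementary block matrix computation using the explicit description of $\mf{p}$, $\mf{l}$, and $\mf{u}_P$ that is already in hand.

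First I would write any $x \in \mc{N}_L + \mf{u}_P$ in block form with respect to the decomposition $\mb{C}^4 = \langle e_1,e_2\rangle \oplus \langle e_3,e_4\rangle$, namely
\[
x \;=\; \begin{pmatrix} A & C \\ 0 & B \end{pmatrix},
\]
where $A, B \in \mf{gl}_2$ are the two blocks coming from the Levi part and $C \in M_{2\times 2}(\mb{C})$ is the unipotent radical contribution. By the definition of $\mc{N}_L$ (the nilpotent cone of $S(GL_2\times GL_2)$), the condition $x \in \mc{N}_L + \mf{u}_P$ is equivalent to $A$ and $B$ being nilpotent $2\times 2$ matrices, while $C$ is arbitrary.

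Next I would observe two things directly from the block form. Since the lower-left block is zero, $\langle e_1, e_2\rangle$ is $x$-stable, and $x|_{\langle e_1,e_2\rangle}$ is given precisely by $A$. Dually, the induced action of $x$ on the quotient $\mb{C}^4/\langle e_1,e_2\rangle$ is given by $B$, because $C$ maps $\langle e_3, e_4\rangle$ into $\langle e_1, e_2\rangle$ and therefore vanishes in the quotient. Hence $x|_{\langle e_1,e_2\rangle} \ne 0 \Leftrightarrow A \ne 0$, and $x|_{\mb{C}^4/\langle e_1,e_2\rangle} \ne 0 \Leftrightarrow B \ne 0$.

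Finally I would finish the proof by noting that in $\mf{gl}_2$ (equivalently $\mf{sl}_2$, since nilpotent forces traceless) every nilpotent element is either zero or lies in the principal orbit, as the only two partitions of $2$ are $[2]$ and $[1^2]$. Therefore the condition that $(A,B) \in \mc{O}_{prin}\times \mc{O}_{prin}$ is exactly the condition that both $A \ne 0$ and $B \ne 0$, and by the previous step this is equivalent to the two stated non-vanishing conditions on $x$. There is no real obstacle here; the only thing to be careful about is the bookkeeping of which block controls which quotient, which is settled once one fixes the convention that $\mf{u}_P$ sits in the upper-right block.
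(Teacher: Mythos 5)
Your proof is correct and is exactly the ``easy matrix calculations'' the paper alludes to without spelling out: the block decomposition of $\mathfrak{p}$ as upper-triangular $2\times 2$ blocks, the identification of $x|_{\langle e_1,e_2\rangle}$ and $x|_{\mathbb{C}^4/\langle e_1,e_2\rangle}$ with the two Levi blocks $A$ and $B$, and the observation that for $\mathfrak{sl}_2$ the only nilpotent orbits are $\{0\}$ and the principal one, so ``nonzero'' is equivalent to ``principal.'' Nothing is missing and the bookkeeping of which block lands in the subspace versus the quotient is handled correctly.
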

The proof follows from some easy matrix calculations.
Therefore following the same process as for $\mf{sp}_4$, \[\mu^{-1}(x)\cap (\mc{O}_{prin}\times \mc{O}_{prin}+\mf{u}_P)=\{(H,x)|\hspace{2mm} x \text{ preserves the subspace $H$, } x|_H\ne 0,x|_{\mb{C}^4/H}\ne 0\} .\]

\begin{lemma}
Let $x$ be the representative of each orbits in $\mf{sl}_4$.\begin{enumerate}
\item For $\mc{O}[4]$, $\mu^{-1}(x)$ is $\{\langle e_1, e_2 \rangle\}$ and $\mu^{-1}(x)\cap (\mc{O}_{prin}\times \mc{O}_{prin}+\mf{u}_P)=\{\langle e_1, e_2 \rangle\}$.
	\item  For $\mc{O}[3,1]$, $\mu^{-1}(x)\cong \mb{P}^1$ and $ \mu^{-1}(x)\cap (\mc{O}_{prin}\times \mc{O}_{prin}+\mf{u}_P)\cong \mb{P}^1-\{[0,1],[1,0]\}$.
	\item For $\mc{O}[2,1^2]$, 
if $x$ preserves $H$, then either $\langle e_1\rangle \subset H$ or $H \subset \ker(x)$. Also 
$\mu^{-1}(x)\cong \mb{P}^2 \sqcup_{\mb{P}^1} \mb{P}^2$ and $ \mu^{-1}(x)\cap (\mc{O}_{prin}\times \mc{O}_{prin}+\mf{u}_P) \cong \emptyset$.
\item For $x\in \mc{O}[2^2]$, $\mu^{-1}(x)\cap (\mc{O}_{prin}\times \mc{O}_{prin}+\mf{u}_P)\cong \mb{A}^1\sqcup \mb{A}^2$.
\end{enumerate}	
\end{lemma}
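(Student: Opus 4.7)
The plan is to do a case-by-case analysis on the four nilpotent orbits of $\mf{sl}_4$, using the identification $G\times^P(\mc{N}_L+\mf{u}_P)\cong\{(H,y)\in \mathrm{Gr}(2,4)\times \mc{N}_G:y(H)\subset H\}$ noted earlier in the section, so that $\mu^{-1}(x)=\{H\in\mathrm{Gr}(2,4):x(H)\subset H\}$, and applying the preceding lemma's criterion: $\mu^{-1}(x)\cap (\mc{O}_{prin}\times \mc{O}_{prin}+\mf{u}_P)$ is the locus where $x|_H\neq 0$ and $x|_{V/H}\neq 0$, i.e.\ $H\not\subset \ker(x)$ and $\im(x)\not\subset H$.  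For $\mc{O}[4]$ the representative is regular nilpotent and $V\cong\mb{C}[x]/x^4$ is cyclic, so the $x$-invariant subspaces form a chain; the unique 2-dimensional one is $\im(x^2)=\langle e_1,e_2\rangle$, for which both cuspidal conditions hold since $\ker(x)=\langle e_1\rangle$ and $\im(x)=\langle e_1,e_2,e_3\rangle$.

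For $\mc{O}[3,1]$ I would decompose $V=V_3\oplus V_1$ into $x$-modules with $V_3=\langle e_1,e_2,e_3\rangle$ cyclic and $V_1=\langle e_4\rangle$ trivial; then either $H\subset V_3$, forcing the unique invariant plane $\langle e_1,e_2\rangle$, or $H\cap V_3=\langle e_1\rangle$, giving $H=\langle e_1,e_4+\alpha e_2\rangle$ for $\alpha\in\mb{C}$ (after modding out multiples of $e_1$).  Taking $\alpha\in\mb{P}^1$ parameterizes $\mu^{-1}(x)$ as $\mb{P}^1$, with $\alpha=0$ giving $\langle e_1,e_4\rangle$ (where $x|_H=0$) and $\alpha=\infty$ giving $\langle e_1,e_2\rangle$ (where $x|_{V/H}=0$); removing these two points gives the asserted open subset.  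For $\mc{O}[2,1^2]$, since $\im(x)=\langle e_1\rangle$, any $v\in H$ with $x(v)\neq 0$ forces $e_1\in H$, which proves the dichotomy: either $\langle e_1\rangle\subset H$ or $H\subset\ker(x)=\langle e_1,e_3,e_4\rangle$.  Each family is a $\mb{P}^2$ (lines in $V/\langle e_1\rangle$ and planes in the 3-dimensional kernel, respectively), glued along the $\mb{P}^1$ of $H$'s with $\langle e_1\rangle\subset H\subset \ker(x)$, yielding $\mb{P}^2\sqcup_{\mb{P}^1}\mb{P}^2$.  The cuspidal conditions $H\not\subset\ker(x)$ and $\im(x)=\langle e_1\rangle\not\subset H$ exclude both families, so the intersection is empty.

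The $\mc{O}[2^2]$ case is the subtlest.  Here $\ker(x)=\im(x)=\langle e_1,e_3\rangle$, so both cuspidal conditions reduce to $H\neq \ker(x)$, and the non-kernel invariant $H$'s are exactly the cyclic $\mb{C}[x]/x^2$-submodules $\langle v,x(v)\rangle$ generated by $v\notin\ker(x)$.  Writing $v=ae_1+be_2+ce_3+de_4$ with $(b,d)\neq 0$ and quotienting by the equivalence $v\sim \lambda v+\mu x(v)$ for $\lambda\in\mb{C}^\times$, $\mu\in\mb{C}$, I would stratify by the projection $v\mapsto [b:d]=[x(v)]\in\mb{P}(\ker x)\cong\mb{P}^1$ and observe an $\mb{A}^1$-bundle over $\mb{P}^1$ whose fiber is $\{(a,c)\in\mb{C}^2\}/\mb{C}(b,d)\cong\mb{A}^1$.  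Decomposing $\mb{P}^1=\mb{A}^1\cup\{\infty\}$, the bundle restricts to a trivial $\mb{A}^1$-bundle $\mb{A}^2$ over the open $\mb{A}^1$ and to a single $\mb{A}^1$ over the point at infinity, producing the stratification $\mb{A}^1\sqcup \mb{A}^2$.  The hard part is correctly carrying out the quotient by the two-parameter equivalence and verifying this cell decomposition; the other three cases reduce to direct linear algebra once the cyclic/decomposition structure of $(V,x)$ is written down.
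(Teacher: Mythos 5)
Your proof is correct and follows essentially the same case-by-case computation of $x$-invariant $2$-planes as the paper, just organized in cleaner module-theoretic language (cyclic summands, and the reformulation of the cuspidal criterion as $H\not\subset\ker(x)$ and $\im(x)\not\subset H$). Your description of the $\mc{O}[2^2]$ stratum as an $\mb{A}^1$-bundle over $\mb{P}^1(\ker x)$ decomposed via an affine chart is exactly the paper's case split on $d=0$ versus $d\ne 0$, repackaged.
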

\begin{proof}
\begin{enumerate}
\item For $\mc{O}[4]$, the only choice for $\mu^{-1}(x)$ is $\langle e_1, e_2 \rangle$. Now $x|_{\langle e_1, e_2\rangle}\ne 0$ and $x|_{\mb{C}^4/{\langle e_1, e_2 \rangle}}\ne 0$, hence $\mu^{-1}(x)\cap (\mc{O}_{prin}\times \mc{O}_{prin}+\mf{u}_P)=\{\langle e_1, e_2 \rangle\}$.	
\item The first claim is that  if $H\in \mu^{-1}(x)$, then $H$ contains $e_1$. Let $H$ does not contains $e_1$.
if $H$ contains $v=ae_1+be_2+ce_3+de_4$, then as $x.v$ is in $H$, so $be_1+ce_2\in H$. If both $b=c=0$, then $v\in \ker( x)=\langle e_1, e_4 \rangle$. If $v \ne e_1$ then it is a linear combination of $e_1$ and $e_4$. In this case  the other basis element of $H$ must be a linear combination of $e_2$ and $e_3$, which contradicts the fact that $H$ $x$-invariant. So $b$ and $c$ both can not be $0$. If $c=0$ we are done. If $c\ne 0$ then $x.(x.v)=ce_1\in H$, therefore $e_1\in H$. 

 Now as $e_1$ is fixed, we have one choice left for the second generator. Now let the second generator $v= ae_2+be_3+de_4$, then $x.v=ae_1+ce_2$. As $H$ is stable under $x$, so $x.v$ must be a scalar multiple of $e_1$ or $v$. In both cases $c=0$, therefore $v$ can be linear combination of $e_2$ and $e_4$. So we have $\langle e_1 \rangle \subset H \subset \langle e_1, e_2, e_4 \rangle$ and $\mu^{-1}(x)\cong \mb{P}^1$.  
 
 For $ \mu^{-1}(x)\cap (\mc{O}_{prin}\times \mc{O}_{prin}+\mf{u}_P) $, $x|_H\ne 0$. So again if we take the other generator $v$ in $H$, which is of the form $ae_2+be_4$, then using the condition $x|_H\ne 0$ we can say $a\ne 0$. Now if $b=0$, then $H=\langle e_1, e_2 \rangle$, which implies $x|_{\mb{C}^4/H}=0$. Therefore  for $H$ to be in  $ \mu^{-1}(x)\cap (\mc{O}_{prin}\times \mc{O}_{prin}+\mf{u}_P)$, $a$ and $b$ both must be nonzero. So $ \mu^{-1}(x)\cap (\mc{O}_{prin}\times \mc{O}_{prin}+\mf{u}_P) \cong \mb{P}^1-\{[0,1], [1,0]\}$.
 
 \item Let $H$ is not contained in $\ker(x)=\langle e_1, e_3, e_4 \rangle$. Let $v=ae_1+be_2+ce_3+de_4\in H$ with $b\ne 0$, then   $x.v=be_1\in H$. Hence $H$ contains $e_1$. Now if $\langle e_1 \rangle \subset H \subset \mb{C}^4$, then the choice for $H$ is $\mb{P}^2$. If $H\subset \ker(x)=\langle e_1, e_3, e_4\rangle$, then again choice is $\mb{P}^2$. If $\langle e_1 \rangle \subset H \subset \ker(x)$,  then the choice is $\mb{P}^1$. Hence $\mu^{-1}(x)\cong \mb{P}^2\sqcup _{\mb{P}^1}\mb{P}^2$. For $ \mu^{-1}(x)\cap (\mc{O}_{prin}\times \mc{O}_{prin}+\mf{u}_P) $, $x|_H \ne 0$, so $H$ can not be contained in $\ker(x)$. But still whatever be the choice of the other generator we can see $x|_{\mb{C}^4/H}$ is always $0$. Therefore $ \mu^{-1}(x)\cap (\mc{O}_{prin}\times \mc{O}_{prin}+\mf{u}_P)=\emptyset $.
 \item If $ae_1+be_2+ce_3+de_4 \in H$, then $be_1+de_3 \in H$. If $x|_H\ne 0$, then $H\cap \ker(x)$ is one dimensional. Call the subspace  $\ker(x)\cap H$ to be $L$ which is definitely generated by elements of the form $be_1+de_3$. Now clearly $L \subset H \subset x^{-1}L=\langle e_1, e_3, be_2+de_4\rangle$.
	If both $b$ and $d$ are $0$, then $H=\langle e_1, e_3\rangle$. This implies $x|_H=0$. The converse is also true.
	For $H$ to be in $\mu^{-1}(x)\cap (\mc{O}_{prin}\times \mc{O}_{prin}+\mf{u}_P)$ we need $x|_H\ne 0$ and $x|_{\mb{C}^4/H}\ne 0$, which is not true for the above case. So one of them must be non-zero. Now if we consider one of them is zero, say $d=0$, then $L=\langle e_1\rangle$. In this case we can consider $b=1$, hence $H$ is generated by $e_1$ and $e_2+ce_3$, and in this case $x|_{\mb{C}^4/H}\ne 0$, so the choice is $\mb{A}^1$. 
	The remaining case is $d\ne0$. Here we can consider $d=1$ and then $L=\langle be_1+e_3\rangle$. In this case, $H$ is generated by $be_1+e_3$ and $ae_1+be_2+ce_3+e_4$, which is the same as $\langle be_1+e_3, a'e_1+be_2+e_4\rangle$, so the choice is $\mb{A}^2$. In this case also $x$ is nonzero on both $H$ and the quotient.

\end{enumerate}
	
\end{proof}

\FloatBarrier
\begin{table}[h!]
\caption{\textbf{Orbits in $\mf{sl}_4$}}
\begin{center}
\begin{tabular}{|c|c|c|c|c|c|c|}
\hline
orbits:& $\mc{O}[4]$ & $\mc{O}[3,1]$&$\mc{O}[2^2]$ & $\mc{O}[2,1^2]$ & $\mc{O}[1^4]$ \\ \hline
$\dim:$ &12& 10&8&6&0\\
\hline
$\pi_1:$& $\mb{Z}/4$ & $\{1\}$ &  $\mb{Z}/2$ &   $\{1\}$ & $\{1\}$\\
\hline
$\mu^{-1}(x): $& $\{\langle e_1,e_2\rangle\}$&$\mb{P}^1$  & -& $\mb{P}^2\sqcup _{\mb{P}^1}\mb{P}^2$ & $G/P$\\
\hline
$\mu^{-1}(x)\cap G\times^P(\mc{O}_{prin}\times \mc{O}_{prin} +\mf{u}_P):$ & $\{\langle e_1, e_2 \rangle\} $& $\mb{P}^1-\{[0,1],[1,0]\}$ & $\mb{A}^2\sqcup\mb{A}^1$ & $\emptyset$ & $\emptyset$ 
\\
\hline
\end{tabular}
\end{center}
\end{table}
\FloatBarrier

Now we are ready to find the $\Ind^G_P$. The only cuspidal pair in $L$ is $(\mc{O}_{prin}\times \mc{O}_{prin},\mc{L}\boxtimes \mc{L})$ where $\mc{L}$ is the nontrivial local system on $\mc{O}_{prin}$. We know $\mc{IC}(\mc{O}_{prin}\times \mc{O}_{prin},\mc{L}\boxtimes \mc{L})\cong \mc{IC}(\mc{O}_{prin},\mc{L})\boxtimes \mc{IC}(\mc{O}_{prin},\mc{L})$.  We can use the parabolic induction diagram introduced in \ref{subsec5.2}, therefore $\Ind^G_P \mc{IC}  (\mc{O}_{prin}\times \mc{O}_{prin},\mc{L}\boxtimes \mc{L})=c_!(b^*\f^G_P)^{-1}a^*(\mc{L}[2]\boxtimes \mc{L}[2])$. 
 Now we will follow the same steps as we did for $\mf{sp}_4$ and using the same diagram,  $\Ind^G_P\mc{IC}(\mc{O}_{prin}\times \mc{O}_{prin},\mc{L}\boxtimes \mc{L})_x$  becomes the $!$-pushforward of a local system on  $G\times^P(\mc{O}_{prin}\times \mc{O}_{prin}+\mf{u}_P)\cap \mu^{-1}(x)$ by a constant map. From the table above,  $G\times^P(\mc{O}_{prin}\times \mc{O}_{prin}+\mf{u}_P)\cap \mu^{-1}(x)$ is simply connected for $\mc{O}[4]$, $\mc{O}[2,1^2]$, $\mc{O}[2^2]$ and $\mc{O}[1^4]$. A local system on a simply connected space is constant sheaf. Therefore for these orbits, the stalks of  $\Ind^G_P\mc{IC}(\mc{O}_{prin}\times \mc{O}_{prin},\mc{L}\boxtimes \mc{L})$ are the cohomologies of  $G\times^P(\mc{O}_{prin}\times \mc{O}_{prin}+\mf{u}_P)\cap \mu^{-1}(x)$. But for $\mc{O}[3,1]$, 
 $G\times^P(\mc{O}_{prin}\times \mc{O}_{prin}+\mf{u}_P)\cap \mu^{-1}(x)$ is $\mb{P}^1-\{[1,0],[0,1]\}$, which is not simply connected. Here we will use the same abuse of notation, both the representative of $\mc{O}[3,1]$  and its image under the projection $\mc{N}_P\to \mc{N}_L$ will be called $x$. Recall we started with a nontrivial $L$-equivariant local system on $\mc{O}_{prin}\times \mc{O}_{prin}$. The projection $\pi:\mc{O}_{prin}\times \mc{O}_{prin}+\mf{u}_P \to \mc{O}_{prin} \times \mc{O}_{prin} $ is a trivial vector bundle, hence induces isomorphism of the equivariant fundamental groups. The inclusion $\mc{O}_{prin}   \times \mc{O}_{prin}+\mf{u}_P \hookrightarrow G\times ^P(\mc{O}_{prin}\times \mc{O}_{prin} +\mf{u}_P)$ induces isomorphism on the equivariant fundamental groups via induction equivalence.  So the pullback of the local system we started with is  still a nontrivial local system on $G\times ^P(\mc{O}_{prin}\times \mc{O}_{prin}+\mf{u}_P)$. Let 
 \[S=\{\begin{pmatrix}
A &	0\\ 0& A
\end{pmatrix}| \hspace{2mm} A=\begin{pmatrix}
a&\\&a^{-1}	
\end{pmatrix}, a\in \mb{C}^\times \}.\] and surely $S\subset G^x$. If we choose a subspace in $\mu^{-1}(x)\cap G\times^P(\mc{O}_{prin}\times \mc{O}_{prin} +\mf{u}_P)$, say $H=\langle e_1, e_2+e_4 \rangle$ then we can see $S$ stabilizes $H$, so $S^H=S$.  Therefore,
 $S^H/(S^H)^\circ$ is trivial. Now we aim to  show $S^H/(S^H)^\circ \cong L^x/(L^x)^\circ$.  It is not hard to see \[L^x \cong \{\begin{pmatrix}
a&&&\\&b&&\\&&a^{-1}&\\&&c&b^{-1}	
\end{pmatrix}| \hspace{2mm} a,b\in \mb{C}^\times \text{ and }c \in \mb{C}
\}\cong \mb{C}^\times \times \mb{C}^\times \times \mb{C}.\]Hence $L^x/(L^x)^\circ$ is also trivial and the map $S^H/(S^H)^\circ \to L^x/(L^x)^\circ$ is an isomorphism. Therefore $(b^*\f^G_P)^{-1}a^*(\mc{L} [2]\boxtimes \mc{L}[2])|_{\mu^{-1}(x)\cap G\times^P(\mc{O}_{prin}\times \mc{O}_{prin}+\mf{u}_P)}$ is  a nontrivial local system. Now again using the same argument from \cite[Prop. ~II.4.1]{Bro},\[D^b\lc(X,\Bbbk) \to D^b_{loc}(X,\Bbbk).\]Here $X=\mb{P}^1-\{[1,0],[0,1]\}$, $\lc(X,\Bbbk)\cong \Bbbk[\pi_1(\mb{P}^1-\{[0,1],[1,0]\})]-mod$, which is same as $\Bbbk[\mb{Z}]-mod\cong \Bbbk[T,T^{-1}]-mod$. Therefore, for the local system $(b^*\f^G_P)^{-1}a^*\mc{L} [2]\boxtimes \mc{L}[2] |_{\mu^{-1}(x)\cap G\times^P(\mc{O}_{prin}\times \mc{O}_{prin}+\mf{u}_P)} $, there exists a $\Bbbk[T,T^{-1}]$ module $M$ on which $T$ acts by $(-1)$. Now we use the same calculation as we did for $\mf{sp}_4$ to conclude  $R\Hom(\Bbbk,M)$ is $0$ in every degree.

 \FloatBarrier
\begin{table}[h!]
 \caption{Stalks of $\Ind^G_P\mc{IC}(\mc{O}_{prin}\times \mc{O}_{prin},\mc{L}\boxtimes \mc{L})$}
\begin{center}
\begin{tabular}{|c|c|c|c|c|c|}
\hline
 $\dim$& $\mc{O}[4]$&$\mc{O}[3,1]$ &$\mc{O}[2^2]$ & $\mc{O}[2,1^2]$ & $\mc{O}[1^4]$\\
 $-6$&&&&&\\$-7$&&&&&\\$-8$&&&&rank 1&\\$-9$&&&&&\\$-10$&&&rank 1&&\\$-11$&&&&&\\$-12$&& &&&\\$-13$&&&&&\\$-14$&rank 1&&&&\\$-15$&&&&&\\$-16$& &&&&\\
 \hline
\end{tabular}	
\end{center}
\end{table}
\FloatBarrier

Hence the parity condition is again satisfied.
\end{example}
\subsection {Construction of $\mf{p},\mf{n},\text{ and } \mf{l}$ described in \ref{subsec6.1}:}\label{subsec8.1}
\begin{example}

Let $G=SL_4$ and $\chi: \mb{C}^\times \to G$ be defined as $t  \to (t,1,1,t^{-1})$. Then the matrix that gives $\mf{g}_{m'}$ for all $m'$ is 
\begin{equation}\label{32}
 \begin{pmatrix}
 0 & 1&1&2\\-1 &0&0&1\\ -1 &0&0&1\\
 -2&-1&-1&0\\
 \end{pmatrix}.
\end{equation} 
Now $\mf{g}_{m'}$ comes from the above matrix by putting nonzero entries wherever we have $m'$ in (\ref{32}) and $0$ elsewhere. For example,
 
 \[\mf{g}_2=
\begin{pmatrix}

 &  &\begin{matrix}  \ast  \end{matrix}\\ & &\\& &\\
\end{pmatrix}
, \mf{g}_1= 
\begin{pmatrix} & \ast & \ast\\
&&&\begin{matrix} \ast  \\ \ast  \end{matrix}\\ \\
\end{pmatrix}, \text{ and}\]
\[
\mf{g}_0= 
\begin{pmatrix}
\begin{matrix} \ast \end{matrix}& & \\ & \begin{matrix} \ast & \ast   \\	\ast  & \ast \end{matrix}
\\
 & & \begin{matrix}    \ast 
\end{matrix}
\end{pmatrix}.
\]Similarly we can find $\mf{g}_{-1},\mf{g}_{-2}$.
\\
Now  choose a point $x\in \mf{g}_{-1}$. We can think of $\mf{g}_{-1}$ as $\Hom(\mb{C},\mb{C}^2) \times \Hom(\mb{C}^2,\mb{C})$, which is a space of representations of quivers of finite type of dimension $(1,2,1)$. By \cite[Theorem ~4.3.9]{DW}, isomorphism classes of $G_0$-orbits in $\mf{g}_{-1}$ are in bijection with the isomorphism classes  of  finite type quiver representations of dimension $(1,2,1)$. This is again a linear combinition of roots of $A_3$ that add up-to $\alpha_1+2\alpha_2+\alpha_3$, where $\alpha_1,\alpha_2,\alpha_3$ are all the simple roots in $A_3$ and $\alpha_4=\alpha_1+\alpha_2, \alpha_5=\alpha_1+\alpha_2+\alpha_3,\alpha_6=\alpha_2+\alpha_3$. Let us pick one such linear combination which  gives a representative of that orbit and  call it $x$. Note that $\al_4+\al_6$  adds up-to the desired sum. This gives the representative \[x=
\begin{pmatrix}
 &&&\\1&&&\\0&&&\\&0&1& 
\end{pmatrix}
\]Note that the Jordan canonical form of $x$ is
\[ 
\begin{pmatrix}
0&1&&\\0&0&&\\&&0&1\\&&0&0\\	
\end{pmatrix}
\]This matrix is associated to the partition $[2,2]$, hence we can use \cite[Lemma ~3.2.6]{CM} to find a map  $ \mf{sl}_2 \to \mf{g}$ which takes  $e$ to  $ \begin{pmatrix}
0&1&&\\0&0&&\\&&0&1\\&&0&0\\	
\end{pmatrix} $ and  $h$ to 
$\begin{pmatrix}
\\ 1&&&\\&-1&&\\&&1& \\&&&-1\\	
\end{pmatrix}\in \mf{g}_0$.
We can see the matrix $\begin{pmatrix}
	&&&1\\&&1&\\&1&&\\1&&&\\
\end{pmatrix}$ conjugates $x$ to $\begin{pmatrix}
0&1&&\\0&0&&\\&&0&1\\&&0&0\\	
\end{pmatrix}$, therefore conjugating the above map by the same matrix we get a map $\phi: \mf{sl}_2\mapsto \mf{g}$ which sends $e$ to $x$ and $h$ to
\[
\begin{pmatrix}
	-1&&&\\&1&&\\&&-1&\\&&&1\\\
\end{pmatrix}\in \mf{g}_0
\]
Clearly, $\tilde{\phi}\begin{pmatrix}
t&\\&t^{-1}	
\end{pmatrix}= \begin{pmatrix}
	t^{-1}&&&\\ & t&&\\&&t^{-1}& \\&&&t \\ 
\end{pmatrix}.
$
Hence we get the required $\chi':\mb{C}^\times \to G$, as $\chi'(t)=\tilde\phi 
\begin{pmatrix}
t&\\&t^{-1}	
\end{pmatrix}= \begin{pmatrix}
	t^{-1}&&&\\ & t&&\\&&t^{-1}& \\&&&t \\ 
\end{pmatrix}
$. 
 	So the matrix $_m \mf{g}$ for all $m$ comes from the matrix below by the same procedure as above.

\begin{equation}\label{33}                                                                                                                                                    \begin{pmatrix}                                                                                                                                              0&-2&0&-2	\\2&0&2&0	\\0&-2&0&-2	\\2&0&2&0                                                                                                                                                   \\

                                                                                                                              \end{pmatrix}.
                                                                                                                                                    \end{equation}
We can see what  $_1\mf{g}, _2\mf{g},_3\mf{g}$ are as before. In this example $n=-1$. So we need conditions on $m+2m'$ to find $\mf{p,n,l}$. The matrix $_m\mf{g}_{m'}$ for $m+2m'$ is given below.
 \[\begin{pmatrix}
	0&0&2&2\\0&0&2&2\\-2&-2&0&0\\-2&-2&0&0\\
\end{pmatrix}.
\]

Hence, with the conditions on $m+2m'$ we can say,
\[\mf{l}
=\begin{pmatrix}
	 \ast & \ast && \\ \ast & \ast && \\&& \ast & \ast\\ &&\ast & \ast \\
\end{pmatrix}.\]
\FloatBarrier
\begin{table}[h!]
\caption{Table for the Levis}
\begin{center}
\begin{tabular}{|c|c|c|c|}
\hline
 $x$& Representative&$\dim$& Associated Levi\\
 \hline
 $\al_2+\al_3+\al_4$ &$\begin{pmatrix}
&&&\\0&&&\\1&&&\\&0&0&\\	
\end{pmatrix}
$&2 &$\begin{pmatrix}
\ast&\ast&\ast&\\\ast&\ast &\ast&\\\ast&\ast&\ast&\\&&&\ast	
\end{pmatrix}
$ \\ $\al_6+\al_4$&$\begin{pmatrix}
&&&\\1&&&\\0&&&\\&0&1&\\	
\end{pmatrix}
$&3&$\begin{pmatrix}
	\ast&\ast&&\\\ast&\ast &&\\&&\ast&\ast\\&&\ast&\ast
\end{pmatrix}
$ \\$\al_2+\al_5$&$\begin{pmatrix}
&&&\\0&&&\\1&&&\\&0&1&\\	
\end{pmatrix}
$&4&$\begin{pmatrix}
\ast&\ast&&\\\ast &\ast&&\\&&\ast&\\&&&\ast\\	
\end{pmatrix}
$ \\$\al_1+\al_2+\al_6$&$\begin{pmatrix}
&&&\\0&&&\\0&&&\\&0&1&\\	
\end{pmatrix}
$& 2&$\begin{pmatrix}
	\ast&&&\\&\ast &&\\&&\ast&\ast\\&&\ast&\ast
\end{pmatrix}

$ \\$\al_1+2\al_2+\al_3$&$\{0\}$ & 0& $G$\\
 \hline
\end{tabular}	
\end{center}
\end{table}
\FloatBarrier
\end{example}

\end{document}